\documentclass[a4paper, 10pt]{article}

\usepackage{authblk}
\usepackage{amsmath}
\usepackage{amssymb}
\usepackage{amsthm}
\usepackage{bm}
\usepackage{bbm}
\usepackage{enumerate}
\usepackage{a4wide}
\usepackage{tikz}
\usepackage{graphicx}
\usepackage{subcaption}
\usepackage{epstopdf}
\usetikzlibrary{decorations.pathreplacing}
\usepackage{algorithm}
\usepackage{algorithmic}

\newcommand{\pr}{\mathbb{P}}
\newcommand{\Prob}[1]{\pr\left(#1\right)}

\newcommand{\e}{\mathbb{E}}
\newcommand{\Exp}[1]{\e\left[#1\right]}

\newcommand{\plim}{\ensuremath{\stackrel{\pr}{\rightarrow}}}

\newcommand{\ind}[1]{\mathbbm{1}_{\left\{#1\right\}}}


\newcommand{\floor}[1]{\left\lfloor{#1}\right\rfloor}

\newcommand{\bigO}[1]{O\left(#1\right)}

\newcommand\numberthis{\addtocounter{equation}{1}\tag{\theequation}}

\allowdisplaybreaks

\newtheorem{theorem}{Theorem}[section]
\newtheorem{definition}{Definition}[section]
\newtheorem{lemma}[theorem]{Lemma}
\newtheorem{proposition}[theorem]{Proposition}
\newtheorem{corollary}[theorem]{Corollary}
\newtheorem{assumption}[definition]{Assumption}


\title{Generating maximally disassortative graphs with given degree distribution.}
\author[1]{Pim van der Hoorn}
\author[2,3]{Liudmila Ostroumova Prokhorenkova} 
\author[2,3]{Egor Samosvat}
\affil[1]{University of Twente, Enschede, the Netherlands}
\affil[2]{Moscow Institute of Physics and Technology, Moscow, Russia}
\affil[3]{Yandex, Moscow, Russia}

\begin{document}

\maketitle

\begin{abstract}
In this paper we consider the optimization problem of generating graphs with a prescribed degree distribution, such that the correlation between the degrees of connected nodes, as measured by Spearman's rho, is minimal. We provide an algorithm for solving this problem and obtain a complete characterization of the joint degree distribution in these maximally disassortative graphs, in terms of the size-biased degree distribution. As a result we get a lower bound for Spearman's rho on graphs with an arbitrary given degree distribution. We use this lower bound to show that for any fixed tail exponent, there exist scale-free degree sequences with this exponent such that the minimum value of Spearman's rho for all graphs with such degree sequences is arbitrary close to zero. This implies that specifying only the tail behavior of the degree distribution, as is often done in the analysis of complex networks, gives no guarantees for the minimum value of Spearman's rho.
\end{abstract}

\textbf{Keywords:} graphs, degree distribution, degree-degree correlation, disassortativity, scale-free distribution 

\section{Introduction}

An important second order characteristic of the topology of a graph, introduced in \cite{Newman2002}, is the correlation between the degrees at both sides of a randomly sampled edge, also called degree-degree correlation or degree assortativity. A graph is called assortative, or is said to have assortative mixing, if this correlation is positive and disassortive if it is negative. In assortative graphs, nodes of a certain degree have a preference to connect to nodes of similar degree, while in a disassortative graph the opposite is true, for instance, nodes of small degrees connect to nodes with large degrees. When the degrees of connected nodes are uncorrelated the graph is said to have neutral mixing. 

Recently, the problem of generating graphs with a given joint degree structure has been investigated. In \cite{Bassler2015a} and \cite{Stanton2012} algorithms are introduced for constructing and sampling graphs with a given joint degree matrix $J$, where an entry $J_{k \ell}$ denotes the number of edges between nodes of degrees $k$ and $\ell$. An algorithm for generating random graphs whose joint degree distribution converges to a given limiting distribution is given in \cite{Deprez2015} and \cite{Hurd2015} under the assumption that the degrees are uniformly bounded in the size of the graph. 

A different branch of research is concerned with generating graphs that have extreme degree-degree correlation structure, either maximally assortative or disassortative, and analyzing structural properties of such graphs. One algorithm that is often used for this is the so-called edge swap algorithm \cite{Kannan1999, Maslov2002, Xulvi-Brunet2004}. In the context of degree-degree correlations, this algorithm starts from an initial graph, with a prescribed degree sequence, and in each step two edges are sampled and switched based on some rule, in order to obtain a maximally (dis)assortative graph. In \cite{Menche2010} this algorithm is used to obtain scaling results for  Pearson's correlation coefficient, as introduced in \cite{Newman2002} on maximally (dis)assortative graphs where the degrees follow a scale-free distribution. The results from \cite{Menche2010} are extended in \cite{Yang2016}, where a lower bound for Pearson's correlation coefficient is established in scale-free graphs. 

One of the problems with the current analysis of graphs with extreme degree-degree correlation structure is the use of Pearson's correlation coefficient as a measure for assortativity, since this measure has been shown to be size-dependent when the degree distribution has infinite variance \cite{Hofstad2014, Hoorn2014a}. In these papers new, rank-based, correlation measures are introduced and it is shown that these measures converge to a proper limit, determined by the joint degree distribution, under very standard assumptions, see \cite{Hofstad2014,Hoorn2014}. Therefore, in this paper, we follow their suggestion and use a rank correlation measure related to Spearman's rho.

We introduce a greedy algorithm for generating graphs with a given degree distribution that are maximally disassortative, with respect to the rank correlation measure Spearman's rho. The algorithm gives insights into the joint degree structure of these graphs. Using these insights we are able to characterize the limiting joint degree distribution of maximally disassortative graphs, in terms of the size-biased degree distribution. Moreover, due to use of a general framework describing the convergence of the empirical distributions, we are able to characterize the speed of the convergence. 

An important consequence of the joint degree structure of maximally disassortative graphs is that the tail of the distribution does not affect the minimum value of Spearman's rho. Moreover, we are able to construct regularly varying distributions with a prescribed exponent, such that Spearman's rho on any graph with this degree distribution is bounded from below by a value that is arbitrary close to zero. 

We complement our theoretical results with simulations that show the concentration of Spearman's rho for graphs generated by our algorithm and illustrate how this measure is influenced by the shape of the size-biased degree distribution. We observe that the minimal value Spearman's rho becomes larger when more mass is placed in the head of the degree distribution, while increasing the mass in the tail of the distribution decreases this value.

\section{Notations and results}

We will start by introducing some notation and summarizing our main results.

\subsection{Graphs and Degree sequences}

Given a degree sequence ${\bf D}_n = \{D_1, D_2, \dots, D_n\}$ we define $L_n = \sum_{i = 1}^n D_i$. That is, $L_n$ is the sum of the degrees and hence \emph{twice} the number of edges in a graph with degree sequence ${\bf D}_n$. We further define the empirical and sized-biased degree distributions by, respectively,
\begin{align}
	f_n(k) &= \frac{1}{n} \sum_{i = 1}^n \ind{D_i = k}, \\
    f_n^\ast(k) &= \frac{1}{L_n} \sum_{i = 1}^n k \ind{D_i = k},
\end{align}
and let $F_n$ and $F_n^\ast$ be the corresponding cumulative distribution functions. 

We will assume that the empirical distributions $f_n$ and $f_n^\ast$ converge to certain limiting distributions $f$ and $f^\ast$ as follows. 

\begin{assumption}\label{asmp:regularity_distributions}
Let $f$ and $f^\ast$ be probability mass functions on the non-negative integers such that 
\begin{equation}\label{eq:finite_mean_plus_condition}
	\sum_{k = 0}^{\infty} k^{1 + \eta} f(k) < \infty
\end{equation}
for some $\eta > 0$ and if we define, for some $\varepsilon > 0$,
\[
	\Omega_n = \left\{\max\left\{\sum_{k =0}^\infty \left|\sum_{t = 0}^k f_n(t) - f(t)\right|, 
    \quad \sum_{k = 0}^\infty \left|f_n^\ast(k) - f^\ast(k)\right| \right\} 
    \le n^{-\varepsilon} \right\},
\]
then
\[
	\Prob{\Omega_n} \plim 1 \quad \text{as } n \to \infty.
\]
\end{assumption}

We will denote by $F$ and $F^\ast$ the cumulative distributions of $f$ and $f^\ast$, respectively. Since we assume that the event $\Omega_n$ occurs, asymptotically, with probability one, we will often use the probability of its complement $\Omega_n^c$ to describe the speed of convergence in our results. In addition, for simplicity of notation, we will assume throughout this paper that $f(k), f^\ast(k) > 0$ for all $k \ge 0$. Our results extend in a straightforward manner to other cases, by considering only all $k$ for which $f(k), f^\ast(k) > 0$.

To give some explanation regarding Assumption \ref{asmp:regularity_distributions} we remark that the first expression in the maximum of the event $\Omega_n$ is related to the Kantorovich-Rubinstein distance or, equivalently, the Wasserstein metric of order one between the distributions $F_n$ and $F$. Convergence in this metric is equivalent to weak convergence as well as convergence of the first absolute moments, see \cite{Villani2008} for more details. Hence, assumption~\ref{asmp:regularity_distributions}  describes the joint convergence of $f_n$ to $f$ and $f_n^\ast$ to $f^\ast$ in the Kantorovich-Rubinstein distance and the $1$-norm, respectively. We used different metrics for the convergence of $f_n$ and $f_n^\ast$, since the Wasserstein metric is only a true distance when the distributions have finite first absolute moment. We are not assuming that the distribution $f^\ast$ has finite first absolute moment since we want to consider graphs whose degree distributions have infinite second moment, which implies that the size-biased degree distribution has infinite mean. 

In order to state our results we will use the following definition
\begin{definition}
Let ${\bf D}_n$ be a degree sequence. We say that ${\bf D}_n \in \mathcal{D}_{\eta, \, \varepsilon}(f,f^\ast)$ if and only if ${\bf D}_n$ satisfies Assumption \ref{asmp:regularity_distributions} with density functions $f$ and $f^\ast$ and $\eta, \varepsilon > 0$. For a graph $G_n$ with a degree sequence ${\bf D}_n$, we will write $G_n \in \mathcal{G}_{\eta, \, \varepsilon}(f, f^\ast)$ if ${\bf D}_n \in \mathcal{D}_{\eta, \, \varepsilon}(f, f^\ast)$.
\end{definition}

\subsection{Spearman's rho on graphs}

For an integer valued random variable $X$, we denote its cummulative distribution function by $F_X$ and define
\begin{equation}\label{eq:definition_mathcal_F}
	\mathcal{F}_X(k) = F_X(k) + F_X(k - 1), \quad \text{for all } k \in \mathbb{Z}.
\end{equation}
Now, let $X$ and $Y$ be two integer valued random variables, then Spearman's rho is defined as \cite{Mesfioui2005}
\begin{equation}\label{eq:spearmans_rho_theoretical}
	\rho(X,Y) = 3\Exp{\mathcal{F}_X(X)\mathcal{F}_Y(Y)} - 3.
\end{equation}

For the definition of Spearman's rho on graphs it is convenient to consider directed edges. To make this work on undirected graphs we replace each edge $i - j$ by two edges, $i \to j$ and $j \to i$. We refer to this graph as the bi-directed version of the original graph. 
Although the graph on which Spearman's rho is computed is directed, we will not distinguish between this 
and the original undirected graph $G_n$. That is, we will write $i \to j \in G_n$ to mean that
$i \to j$ is present in the bi-directed version of $G_n$, which is equivalent to stating that 
$i - j \in G_n$. We recall that $L_n$ denotes the 
sum over all degrees, so that $L_n$ is \emph{twice} the number of undirected edges and equal
to the corresponding number of directed edges in $G_n$.

Next we will consider Spearman's rho with uniform ranking, as described in \cite{Hofstad2014} and \cite{Hoorn2014a}. That is, we take $({\bf U}_{i \to j}, {\bf W}_{i \to j})$ to be a vector of independent uniform random variables 
$U_{i \to j}$ and $W_{i \to j}$ on $(0, 1)$, for each edge $i \to j \in G$, and define the ranking functions $R_\ast(i \to j)$ and $R^\ast(i \to j)$ by
\begin{align*}
	R_\ast(i \to j) &= \sum_{s \to t \in G} \hspace{-3pt} 
    	\ind{D_s + U_{s \to t} \ge D_i + U_{i \to j}}, \\
    R^\ast(i \to j) &= \sum_{s \to t \in G} \hspace{-3pt} 
    	\ind{D_t + W_{s \to t} \ge D_j + W_{i \to j}},
\end{align*}
where we let $\sum_{i \to j \in G}$ denote the sum over all edges $i \to j$ in the graph $G$.
With these definitions, Spearman's rho is defined as, see \cite{Hofstad2014, Hoorn2014a},
\begin{equation}\label{eq:spearmans_rho_full}
	\rho(G_n) = \frac{12 \sum_{i \to j \in G} R_\ast(i \to j)R^\ast(i \to j) - 3L_n(L_n + 1)^2}
    {L_n^3 - L_n}.
\end{equation} 

To link $\rho(G_n)$ to \eqref{eq:spearmans_rho_theoretical}, let $h_n$ denote the empirical joint probability density function of the degrees on both sides of a random edge, i.e.
\[
	h_n(k, \ell) = \frac{1}{L_n} \sum_{i \to j \in G} \hspace{-3pt} \ind{D_i = k} \ind{D_j = \ell}.
\]
Then, if $h_n$ converges to some limiting distribution $h$, it follows from Theorem 3.2 in \cite{Hofstad2014} that
\[
	\rho(G_n) \plim \rho(X,Y) \quad \text{as } n \to \infty,
\]
where $X$ and $Y$ have joint distribution $h$. In other words, $\rho(G_n)$ is a consistent estimator of $\rho(X,Y)$. Moreover, in~\cite{Hoorn2014a} it is shown that $\rho(G_n)$ is asymptotically equivalent to
\begin{equation}\label{eq:spearmans_rho}
	\widetilde{\rho}(G_n) = \frac{3}{L_n} \sum_{i \to j} \mathcal{F}_n^\ast(D_i)
    \mathcal{F}_n^\ast(D_j) - 3.
\end{equation}
Since this expression is easier to analyze mathematically, we will use this measures in our statements. We show with numerical experiments in Section \ref{sec:spearman_experiments} that our results also hold for the original expression \ref{eq:spearmans_rho_full}.

\subsection{Main results}\label{ssec:main_results}

In order to state the first result we define, for any $k, \ell \ge 1$, the functions
\begin{align}
	\psi(k, \ell) &= \ind{1 - F^\ast(k) < F^\ast(\ell)} \ind{1 - F^\ast(k - 1) > F^\ast(\ell - 1)}, 
    \label{eq:def_psi}\\
    \mathcal{E}(k,\ell) &= \min\left(1 - F^\ast(k - 1), F^\ast(\ell)\right) -\max\left(1-F^\ast(k), 
    F^\ast(\ell-1)\right) \label{eq:def_mathcal_E}.
\end{align} 

These functions can be understood as follows. Consider the partition of the interval $[0,1]$ given by the sequence $\{f^\ast(1), f^\ast(2), \dots\}$. Now take a copy of this partitioned interval, reverse it and place it below the original interval, see Figure \ref{fig:explanation_joint_degree_distribution}. Then $\psi(k,\ell)$ is the indicator of the event that the interval corresponding to $f^\ast(\ell)$ on the top intersects with the interval corresponding to $f^\ast(k)$ at the bottom, while $\mathcal{E}(k,\ell)$ is the size of this intersection. 

\begin{figure}
	\centering
    \begin{tikzpicture}
		\draw (0,2) -- (10,2);
        \draw (0,1.8) -- (0,2.2);
        \draw node at (0.6,2.5) {$f^\ast(1)$};
        \draw (1.2,1.8) -- (1.2,2.2);
        \draw node at (1.9,2.5) {$f^\ast(2)$};
        \draw (2.6,1.8) -- (2.6,2.2);
        \draw node at (3.1,2.5) {$\dots$};
        \draw (3.6,1.8) -- (3.6,2.2);
        \draw node at (4.2,2.5) {$f^\ast(\ell)$};
        \draw (4.8,1.8) -- (4.8,2.2);
        \draw (10,1.8) -- (10,2.2);
        
        \draw [dotted] (3.6,0.3) -- (3.6,1.7);
        \draw [dotted] (4.2,0.3) -- (4.2,1.7);
        
        \draw (0,0) -- (10,0);
        \draw (0,-0.2) -- (0,0.2);
        \draw (3,-0.2) -- (3,0.2);
        \draw node at (3.6,-0.5) {$f^\ast(k)$};
        \draw (4.2,-0.2) -- (4.2,0.2);
        \draw node at (5.8,-0.5) {$\dots$};
        \draw (7.4,-0.2) -- (7.4,0.2);
        \draw node at (8.1,-0.5) {$f^\ast(2)$};
        \draw (8.8,-0.2) -- (8.8,0.2);
        \draw node at (9.4,-0.5) {$f^\ast(1)$};
        \draw (10,-0.2) -- (10,0.2);
	\end{tikzpicture}
    \caption{Illustration of the functions $\psi$ and $\mathcal{E}$.}
    \label{fig:explanation_joint_degree_distribution}
\end{figure}
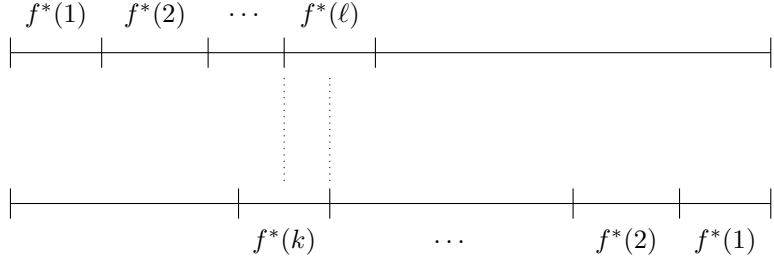

With these functions we now define the joint probability density function
\begin{equation}\label{eq:joint_degree_density}
	h(k, \ell) = \psi(k, \ell)\mathcal{E}(k,\ell), \quad k, \, \ell = 1, \, 2, \, \dots.
\end{equation}
Our main result states that if $X$ and $Y$ have joint distribution $h$, then Spearman's rho on graphs with a degree sequence satisfying Assumption \ref{asmp:regularity_distributions} is bounded from below by $\rho(X,Y)$, and that this minimum is attained for a specific sequence of graphs.

\begin{theorem}\label{thm:main_result}
Let $G_n \in \mathcal{G}_{\eta,\,\varepsilon}(f, f^\ast)$ and let $X, Y$ be random variables with joint distribution $h$ as defined in \eqref{eq:joint_degree_density}. 
Then, for any $0 < \delta < \min(\varepsilon, 1/2)$ and $K > 0$,
\begin{equation}\label{eq:ch6_convergence_lower_bound_spearman}
	\Prob{\widetilde{\rho}(G_n) \ge \widetilde{\rho}(D_\ast,D^\ast)- K n^{-\delta}} \ge 1 
	- \bigO{n^{-\varepsilon + \kappa} + \Prob{\Omega_n^c}},
\end{equation}
as $n \to \infty$, where
\[
	\kappa = \frac{\varepsilon + \delta}{2}.
\]

Moreover, there exists graphs $\widehat{G}_n$ with the same degree sequence as 
$G_n$, such that, as $n \to \infty$,
\[
	\Prob{\left|\widetilde{\rho}(\widehat{G}_n) - \rho(D_\ast,D^\ast)\right| > K n^{-\delta} } \le 
   \bigO{n^{-\varepsilon + \kappa} + \Prob{\Omega_n^c}}.
\]
\end{theorem}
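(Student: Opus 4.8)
\noindent\emph{Proof plan.}
I would begin by recasting $\widetilde{\rho}(G_n)$ in distributional form. Since the fraction of directed edges leaving degree-$k$ vertices equals $kf_n(k)n/L_n=f_n^\ast(k)$, the empirical joint degree distribution $h_n$ is symmetric with both marginals equal to $f_n^\ast$, and
\[
  \widetilde{\rho}(G_n)=3\sum_{k,\ell\ge 1}h_n(k,\ell)\,\mathcal{F}_n^\ast(k)\,\mathcal{F}_n^\ast(\ell)-3,
\]
a deterministic function of $G_n$ (the auxiliary uniform marks do not enter here). The next step is to identify the limiting object: the ``reverse-and-stack the partitioned interval'' description behind \eqref{eq:def_psi}--\eqref{eq:joint_degree_density} and Figure~\ref{fig:explanation_joint_degree_distribution} shows that $h$ is exactly the lower Fr\'echet (countermonotone) coupling of two copies of $f^\ast$, i.e.\ the law of $\big((F^\ast)^{-1}(U),(F^\ast)^{-1}(1-U)\big)$ for $U$ uniform on $(0,1)$. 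Its common marginal is $f^\ast$, so its $\mathcal{F}$-function is $\mathcal{F}^\ast$ and, by \eqref{eq:spearmans_rho_theoretical}, $\widetilde{\rho}(D_\ast,D^\ast)=\rho(D_\ast,D^\ast)=3\int_0^1\phi^\ast(u)\phi^\ast(1-u)\,du-3$ with $\phi^\ast(u)=\mathcal{F}^\ast\!\big((F^\ast)^{-1}(u)\big)$.

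For the lower bound I would invoke the Hoeffding--Fr\'echet (Lorentz) rearrangement inequality: since $\mathcal{F}_n^\ast$ is nondecreasing, $\sum_{k,\ell}h_n(k,\ell)\mathcal{F}_n^\ast(k)\mathcal{F}_n^\ast(\ell)$ is minimised, over \emph{all} bivariate laws with marginals $f_n^\ast$ and hence over all graphs with the prescribed degree sequence (no graphicality constraint is needed for this direction), by the countermonotone coupling $h_n^-$ of $f_n^\ast$. This is a deterministic inequality, so $\widetilde{\rho}(G_n)\ge\widetilde{\rho}_n^-:=3\sum_{k,\ell}h_n^-(k,\ell)\mathcal{F}_n^\ast(k)\mathcal{F}_n^\ast(\ell)-3$ always; it then remains to show $\widetilde{\rho}_n^-\ge\rho(D_\ast,D^\ast)-Kn^{-\delta}$ with the stated probability. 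On $\Omega_n$ one has $\|F_n^\ast-F^\ast\|_\infty\le\|f_n^\ast-f^\ast\|_1\le n^{-\varepsilon}$ and $\|F_n-F\|_1\le n^{-\varepsilon}$. I would truncate the degrees at a level $M_n$: the truncation error is governed by the tail mass $1-F_n^\ast(M_n)$, which by $f^\ast(k)=kf(k)/\mu$ and \eqref{eq:finite_mean_plus_condition} has expectation $\bigO{M_n^{-\eta}}$ and is at most $1-F^\ast(M_n)+n^{-\varepsilon}=\bigO{M_n^{-\eta}+n^{-\varepsilon}}$ on $\Omega_n$; on the head, $h^-$ restricted to $\{1,\dots,M_n\}^2$ is a monotone staircase with $\bigO{M_n}$ occupied cells, each a $1$-Lipschitz function of four values of $F^\ast$, so that $|h_n^-(k,\ell)-h^-(k,\ell)|=\bigO{n^{-\varepsilon}}$, while the cells with large $k$ carry $\mathcal{F}_n^\ast$-weights of size $\bigO{1-F^\ast(k)}=\bigO{k^{-\eta}}$. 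Choosing $M_n$ to balance these contributions, and controlling the random tail mass by a Markov bound at level $n^{-\kappa}$ (which is exactly what produces the failure probability $\bigO{n^{-\varepsilon+\kappa}}$ with $\kappa=(\varepsilon+\delta)/2$), gives the claimed bound for every $0<\delta<\min(\varepsilon,1/2)$, modulo the additive $\Prob{\Omega_n^c}$.

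For the existence of $\widehat{G}_n$ I would analyse the greedy pairing that, after listing the $L_n$ half-edges with multiplicity and sorting them by the degree of their endpoint, matches half-edge $j$ with half-edge $L_n+1-j$. This is precisely the reversed-interval matching at the discrete level, so the joint degree distribution of the resulting graph $\widehat{G}_n$ coincides with $h_n^-$ up to corrections that are negligible at scale $n^{-\delta}$: these arise only from integer rounding of the interval endpoints and from the vanishing fraction of self-loops and repeated edges created, the latter removable by local rewiring (should a simple graph be required) without affecting the leading order. Hence $|\widetilde{\rho}(\widehat{G}_n)-\widetilde{\rho}_n^-|$ is of strictly smaller order than $n^{-\delta}$, and combining this with the head estimate above, now used as a two-sided bound, yields $|\widetilde{\rho}(\widehat{G}_n)-\rho(D_\ast,D^\ast)|\le Kn^{-\delta}$ on the same high-probability event.

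I expect the main obstacle to be the quantitative rate bookkeeping, precisely because $f^\ast$ may have infinite mean: the truncation error $\bigO{M_n^{-\eta}}$ forces $M_n$ to be a genuine power of $n$, yet every head contribution --- the Lipschitz stability of the countermonotone coupling, the closeness of $\mathcal{F}_n^\ast$ to $\mathcal{F}^\ast$, and the rounding error of the greedy pairing --- grows with $M_n$, so closing the exponents over the full range $0<\delta<\min(\varepsilon,1/2)$ while keeping the failure probability at $\bigO{n^{-\varepsilon+\kappa}}$ requires both a careful choice of $M_n$ and genuine use of concentration for the random degree sequence, rather than merely its membership in $\Omega_n$. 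A secondary difficulty is verifying that the greedy construction can always be repaired to a simple graph without disturbing the leading-order value of $\widetilde{\rho}$.
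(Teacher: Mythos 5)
Your overall architecture matches the paper's: the deterministic lower bound via the rearrangement (Hoeffding--Fr\'echet) inequality over all couplings with marginal $f_n^\ast$ is exactly the paper's Lemma \ref{lem:permutation_optimization}/Theorem \ref{thm:optimality_algorithm} (including the observation that one may relax the graphicality constraint), your ``greedy pairing of half-edge $j$ with half-edge $L_n+1-j$'' is precisely the \texttt{DGA}, and indeed its empirical joint degree law is \emph{exactly} $h_n^-=\psi_n\mathcal{E}_n$ (Proposition \ref{prop:empirical_joint_degree_distribution}), so no rounding or simplicity corrections are needed at all — the theorem does not ask $\widehat{G}_n$ to be simple, and multi-edges do not affect $\widetilde{\rho}$.

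The genuine gap is in the step you yourself flag as the main obstacle: proving $\bigl|\widetilde{\rho}_n^- - \rho(D_\ast,D^\ast)\bigr|\le Kn^{-\delta}$ with failure probability $\bigO{n^{-\varepsilon+\kappa}+\Prob{\Omega_n^c}}$. Your truncation at $M_n$ gives a head error of order $M_n n^{-\varepsilon}$ and a tail error of order $M_n^{-\eta}+n^{-\varepsilon}$; balancing yields $n^{-\varepsilon\eta/(1+\eta)}$, which is strictly weaker than $n^{-\delta}$ once $\eta$ is small and $\delta$ is close to $\min(\varepsilon,1/2)$, so the exponents do not close over the full stated range of $\delta$. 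Your proposed remedy, ``genuine use of concentration for the random degree sequence,'' is not available under the theorem's hypotheses: $G_n\in\mathcal{G}_{\eta,\varepsilon}(f,f^\ast)$ only provides the event $\Omega_n$ with its $n^{-\varepsilon}$ rate (the degrees need not be i.i.d.\ samples from $f$). The missing idea, which the paper uses in Theorem \ref{thm:joint_degree_distribution}, is that no growing truncation level is needed: let $p$ be the \emph{fixed} smallest integer with $1-F^\ast(p)<f^\ast(1)$; with probability $1-\bigO{n^{-\varepsilon}+\Prob{\Omega_n^c}}$ the empirical threshold agrees with $p$, and beyond $p$ the countermonotone coupling places all mass on pairs with degree $1$, so the tail of $\sum_{k,\ell}|h_n^-(k,\ell)-h(k,\ell)|$ collapses to $\sum_{\ell}|f_n^\ast(\ell)-f^\ast(\ell)|\le n^{-\varepsilon}$ on $\Omega_n$, leaving only a fixed finite $p\times p$ block whose cells are handled by Markov bounds at scale $n^{-\delta}$ (this is where the $n^{-\varepsilon+\delta}$, hence $n^{-\varepsilon+\kappa}$, failure probability comes from, including the tie cases where the indicator $\psi_n$ need not converge). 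Without this device, your sketch as written does not deliver the claimed rate for all $0<\delta<\min(\varepsilon,1/2)$.
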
 

This result can be understood in terms of the following optimization problem. Given a degree sequences ${\bf D}_n \in \mathcal{D}_{\eta,\,\varepsilon}(f, f^\ast)$,
define
\[
	\mathcal{F}_n^\ast(k) = F_n^\ast(k) + F_n^\ast(k - 1).
\] 
and consider, for fixed $n$, the following objective function
\begin{equation}\label{eq:approx_optimization_problem}
	\min_{G \in \mathcal{G}({\bf D}_n)(f,f^\ast)} \frac{1}{L_n} \sum_{i \to j \in G} \mathcal{F}_n^\ast(D_i)\mathcal{F}_n^\ast(D_j),
\end{equation}
where the minimum is understood to be taken over all graphs $G_n$ with degree sequences satisfying Assumption \ref{asmp:regularity_distributions} with density functions
$f$ and $f^\ast$. Then Theorem \ref{thm:main_result} states that with high probability
\[
	\min_{G_n \in \mathcal{G}_{\eta,\,\varepsilon}(f, f^\ast)} \rho(G_n) = \rho(D_\ast,D^\ast),
\]
where $\rho(D_\ast, D^\ast)$ is given by, see \eqref{eq:spearmans_rho_theoretical},
\[
	\rho(D_\ast, D^\ast) = \sum_{k, \ell = 0}^\infty \mathcal{F}^\ast(k)\mathcal{F}^\ast(\ell) 
		\psi(k,\ell)\mathcal{E}(k,\ell),
\]
with $\psi$ and $\mathcal{E}$ as defined in \eqref{eq:def_psi} and \eqref{eq:def_mathcal_E},respectively. Moreover, Theorem \ref{thm:main_result}
provides a sequence of graphs $\widehat{G}_n$ that attains this minimum, i.e. a sequence of maximally disassortative graphs with the degree distribution $f$. These graphs will be generated by our algorithm, which we will present in Section \ref{sec:algorithm}.

We remark that although Theorem \ref{thm:main_result} solves the minimization problem of degree-degree 
correlations in undirected graphs by giving an the asymptotic minimum $\rho(D_\ast, D^\ast)$ on 
Spearman's rho, this minimum is, in general, hard to derive since it depends on the full 
size-biased limit density $f^\ast$. However, for specific cases it can be computed numerically by 
computing
\[
	\sum_{k = 0}^{K} \sum_{\ell = 0}^{L} \mathcal{F}^\ast(k)\mathcal{F}^\ast(\ell) \psi(k,\ell)\mathcal{E}(k,\ell),
\]
for certain upper bounds $K$ and $L$. 

Part of the proof of Theorem \ref{thm:main_result} consists of showing that $h$ is the limiting joint degree distribution of maximally disassortative graphs. From the interpretation of the functions $\psi$ and $\mathcal{E}$, it follows that for all $k \ge K$, for some threshold $K$, all intervals corresponding to $f^\ast(k)$ on the top will be contained in the interval $f^\ast(1)$ at the bottom and vice versa. This implies that the large degree nodes will, asymptotically, all be connected to nodes with degree one. As a consequence we have that the tail of the distribution $f^\ast$, and hence also that of $f$, plays a negligible role in the lower bound of Spearman's rho. Therefore, we can construct degree distributions with specified tail behavior so that Spearman's rho on such graphs approaches zero from below, with arbitrary precision. 

\begin{theorem}\label{thm:independence_spearman_tail}
Let $f$ be any probability density function with support on the non-negative integers, mean 
$\mu$ and 
\[
	\sum_{k = 0}^\infty k^{1 + \eta} f(k) < \infty,
\]
for some $\eta > 0$. Then, for any $-1 < \rho < 0$, there exists a probability density function 
$f_\rho$ on the non-negative integers with mean $\mu_\rho$, which satisfies 
\[
	\lim_{k \to \infty} \frac{1 - F_\rho(k)}{1 - F(k)} = \frac{\mu_\rho}{\mu}.
\]
Moreover, for any sequence of graphs $G_n \in \mathcal{G}_{\eta, \, \varepsilon}(f_\rho, f_\rho^\ast)$, where
$f_\rho^\ast(k) = kf_\rho(k)/\mu_\rho$, we have
\[
	\Prob{\widetilde{\rho}(G_n) > \rho} \ge 1 - \bigO{n^{-1 + \kappa} + n^{-\varepsilon + 3\kappa/4}
	+ \Prob{\Omega_n^c}},
\]
as $n \to \infty$, where $\kappa = \min(\varepsilon, 1/2)$.
\end{theorem}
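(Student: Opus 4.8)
\emph{Overall strategy.} The plan is to derive Theorem~\ref{thm:independence_spearman_tail} from Theorem~\ref{thm:main_result}. For a given $\rho\in(-1,0)$ I would build a degree distribution $f_\rho$ whose size-biased version $f_\rho^\ast$ puts almost all of its mass on the value $1$; then the lower bound $\rho(D_\ast,D^\ast)$ supplied by Theorem~\ref{thm:main_result}, evaluated at $f_\rho^\ast$, lies strictly above $\rho$, while the tail of $f_\rho$ will be forced to equal $(\mu_\rho/\mu)$ times that of $f$. Feeding $f_\rho$ into \eqref{eq:ch6_convergence_lower_bound_spearman} then gives $\widetilde\rho(G_n)\ge\rho(D_{\ast,\rho},D^\ast_\rho)-Kn^{-\delta}$ with high probability, and since $\rho(D_{\ast,\rho},D^\ast_\rho)-\rho$ is a positive constant the correction $Kn^{-\delta}$ is eventually negligible, yielding $\widetilde\rho(G_n)>\rho$.

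\emph{The construction.} Fix a threshold $m$ and a small constant $c>0$ (the tail ratio), and set $f_\rho(k)=c\,f(k)$ for $k\ge m$, $f_\rho(k)=0$ for $2\le k\le m-1$, $f_\rho(1)=c\sum_{k=1}^{m-1}k\,f(k)$, and let $f_\rho(0)$ absorb the remaining probability. (If full support is wanted one redistributes an arbitrarily small, $c$-proportional amount onto $\{2,\dots,m-1\}$ at the expense of $f_\rho(1)$, or one simply appeals to the remark after Assumption~\ref{asmp:regularity_distributions}.) For $c$ small enough this is a probability mass function, it satisfies $\sum_k k^{1+\eta}f_\rho(k)\le c\sum_k k^{1+\eta}f(k)+(\text{finite head})<\infty$, and the choice of $f_\rho(1)$ makes $\mu_\rho:=\sum_k k\,f_\rho(k)=c\mu$. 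Consequently $1-F_\rho(k)=c\,(1-F(k))=(\mu_\rho/\mu)(1-F(k))$ for all $k\ge m$, which is the asserted tail identity, exactly and hence in the limit. Moreover $f_\rho^\ast(k)=k\,f_\rho(k)/\mu_\rho$ equals $f^\ast(k)$ for $k\ge m$ and $f_\rho^\ast(1)=F^\ast(m-1)$, so $f_\rho^\ast$ depends on the construction only through $m$ and converges weakly to the point mass at $1$ as $m\to\infty$; the value of $c$ is irrelevant here.

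\emph{The limit of the lower bound.} I would then show that the constant $\rho(D_{\ast,m},D^\ast_m)$ produced by Theorem~\ref{thm:main_result} for $f_\rho^\ast$ tends to $0$ as $m\to\infty$. Since $\mathcal F^\ast_m(1)=F^\ast_m(1)+F^\ast_m(0)=F^\ast(m-1)\to1$ and, reading $\psi$, $\mathcal E$ off Figure~\ref{fig:explanation_joint_degree_distribution}, the joint law $h_m$ of \eqref{eq:joint_degree_density} satisfies, for all large $m$, $h_m(1,1)=\mathcal E_m(1,1)=2F^\ast(m-1)-1$, which tends to $1$, while $0\le\mathcal F^\ast_m\le2$ uniformly, so from \eqref{eq:spearmans_rho_theoretical} and the crude bound $\sum_{(k,\ell)\neq(1,1)}h_m(k,\ell)\,\mathcal F^\ast_m(k)\mathcal F^\ast_m(\ell)\le 4\,(1-h_m(1,1))$ we get $\rho(D_{\ast,m},D^\ast_m)=3\Exp{\mathcal F^\ast_m(X_m)\mathcal F^\ast_m(Y_m)}-3\to0$, where $(X_m,Y_m)\sim h_m$. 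Hence, for the given $\rho$, one fixes $m=m(\rho)$ with $\gamma:=\rho(D_{\ast,m(\rho)},D^\ast_{m(\rho)})-\rho>0$ and sets $f_\rho:=f_{m(\rho)}$.

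\emph{Conclusion and the main obstacle.} Applying Theorem~\ref{thm:main_result} to $f_\rho$ with $\delta$ of order $\kappa=\min(\varepsilon,1/2)$ (so its error exponent is $-\varepsilon+(\varepsilon+\delta)/2$), and retaining, in addition, the $\bigO{n^{-1+\kappa}}$ fluctuation term appearing in that proof, one obtains $\widetilde\rho(G_n)\ge\rho(D_{\ast,\rho},D^\ast_\rho)-Kn^{-\delta}>\rho$ once $Kn^{-\delta}<\gamma$, on an event of the claimed probability. I expect the genuinely delicate point to be the construction: one has to pile essentially all of the size-biased mass onto degree $1$ \emph{and} keep the tail of $f_\rho$ equal to $(\mu_\rho/\mu)$ times the tail of $f$ — the resolution is to scale the tail precisely by the a priori unknown constant $\mu_\rho/\mu$ itself, exploiting that concentrating $f^\ast$ at $1$ necessarily forces $\mu_\rho\to0$. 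Verifying $\rho(D_{\ast,m},D^\ast_m)\to0$ and the final rate bookkeeping are then routine.
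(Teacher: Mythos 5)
Your proposal is correct in substance and follows the same underlying idea as the paper (concentrate the size-biased mass at degree $1$ while keeping the tail of the degree distribution proportional to that of $f$, then invoke the lower bound for maximally disassortative graphs), but the implementation differs at both key steps. For the construction, the paper applies the $\delta$-transform of Algorithm \ref{alg:ch6_delta_transfrom} to the \emph{size-biased} law $f^\ast$ and then ``de-biases'' it, choosing the normalizing constant $\mu_\rho$ so that the resulting $f_\rho$ sums to one and splitting the unit-degree mass between degrees $0$ and $1$; you instead scale the tail of $f$ itself by a constant $c$ and dump the removed size-biased head mass onto degree $1$, which makes $\mu_\rho=c\mu$ and hence the tail identity $1-F_\rho(k)=(\mu_\rho/\mu)(1-F(k))$ hold \emph{exactly} for large $k$, with $f_\rho^\ast$ depending only on the cut-off $m$ — arguably a cleaner route to the tail condition. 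For the lower bound, the paper passes through Proposition \ref{prop:lower_bound_spearman} (the cubic bound \eqref{eq:spearman_lower_bound_a1} valid when $f^\ast(1)\ge 1/2$), whereas you bound the limiting constant of Theorem \ref{thm:main_result} directly from the explicit $h$ of \eqref{eq:joint_degree_density}, keeping only the $(1,1)$ term $h(1,1)\,\mathcal F^\ast(1)^2=(2F^\ast(m-1)-1)F^\ast(m-1)^2\to 1$; this is the same mechanism in a slightly more bare-handed form and is perfectly adequate, since only a lower bound exceeding $\rho$ by a fixed margin is needed before absorbing the $Kn^{-\delta}$ correction.

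Two small caveats. First, your fallback of redistributing an arbitrarily small amount onto $\{2,\dots,m-1\}$ ``at the expense of $f_\rho(1)$'' would change $\mu_\rho$ and thereby destroy the exact identity $c=\mu_\rho/\mu$, so the stated tail limit would no longer equal $\mu_\rho/\mu$; either keep the zeros and invoke the remark after Assumption \ref{asmp:regularity_distributions} (your primary suggestion, which is what the paper's convention allows), or make the perturbation mean-preserving (compensate from degrees $0$ \emph{and} $1$ so that both the total mass and the first moment are unchanged). Second, the rate bookkeeping requires pinning down $\delta$: taking, say, $\delta=\kappa/2$ with $\kappa=\min(\varepsilon,1/2)$ gives an error exponent $-\varepsilon+(\varepsilon+\delta)/2\le-\varepsilon+3\kappa/4$, matching the claimed bound (any extra $\bigO{n^{-1+\kappa}}$ term only weakens the statement), whereas ``$\delta$ of order $\kappa$'' taken too close to $\kappa$ would overshoot the stated exponent. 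Neither point affects the validity of the argument.
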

The main message of Theorem \ref{thm:independence_spearman_tail} is that it is not the tail of 
the degree distribution that is crucial for the minimal value of $\widetilde{\rho}(G_n)$.

The characterization of the tail of the degree distribution is most prominently present in the analysis of so-called scale-free networks. These are graphs where the limiting degree distribution $F$ satisfies
\begin{equation}\label{eq:regularly_varying_degrees}
	1 - F(k) = \mathcal{L}(k) k^{-\gamma}, \quad \gamma > 1,
\end{equation}
for some slowly varying function $\mathcal{L}$. The exponent $\gamma$ is referred to as the tail exponent. As a corollary to Theorem \ref{thm:independence_spearman_tail} we obtain the following result which states that knowledge of only the tail exponent does not give any guarantees on the minimum value of Spearman's rho. 

\begin{corollary}\label{cor:independence_rho_scale_free}
For any $-1 < \rho < 0$ and $\gamma > 1$, there exist distributions $f$ and $f^\ast$, where $F$ satisfies \eqref{eq:regularly_varying_degrees}, such that for any sequence of graphs $G_n \in 
\mathcal{G}_{\eta, \varepsilon}(f, f^\ast)$
\[
	\Prob{\rho(G_n) > \rho} \ge 1 - \bigO{n^{-1 + \kappa} + n^{-\varepsilon + 3\kappa/4}
	+ \Prob{\Omega_n^c}},
\]
as $n \to \infty$, where $\kappa = \min(\varepsilon, 1/2)$.
\end{corollary}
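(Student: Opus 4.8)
The plan is to deduce this directly from Theorem \ref{thm:independence_spearman_tail}, after first producing a suitable regularly varying seed distribution and then transferring from $\widetilde{\rho}(G_n)$ to the original $\rho(G_n)$.

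Fix $\gamma > 1$ and $-1 < \rho < 0$. First I would choose $\eta \in (0, \gamma - 1)$ and let $g$ be a probability density on the non-negative integers whose tail is regularly varying with exponent $\gamma$, for instance a (shifted) discretised Pareto with $1 - G(k) = \mathcal{L}(k) k^{-\gamma}$ and $\mathcal{L}(k) \to c > 0$. Since $\gamma > 1$ such a $g$ has finite mean $\mu$, and since $1 + \eta < \gamma$ it satisfies $\sum_{k \ge 0} k^{1+\eta} g(k) < \infty$; hence $g$ meets the hypotheses of Theorem \ref{thm:independence_spearman_tail}. Next I would apply that theorem to $g$ with the value $\rho/2 \in (\rho, 0)$ in place of $\rho$, obtaining a density $f := g_{\rho/2}$ with mean $\mu' := \mu_{\rho/2}$; setting $f^\ast(k) := k f(k)/\mu'$ one has
\[
	\lim_{k \to \infty} \frac{1 - F(k)}{1 - G(k)} = \frac{\mu'}{\mu}.
\]
Combined with $1 - G(k) = \mathcal{L}(k) k^{-\gamma}$ this gives $1 - F(k) = \mathcal{L}'(k) k^{-\gamma}$ with $\mathcal{L}'(k) := (1 - F(k)) k^{\gamma}$, and since $\mathcal{L}'(k)/\mathcal{L}(k) \to \mu'/\mu$ and a constant multiple of a slowly varying function is slowly varying, $\mathcal{L}'$ is slowly varying. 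Thus $F$ satisfies \eqref{eq:regularly_varying_degrees} with the same exponent $\gamma$, so $f$ and $f^\ast$ are the distributions claimed in the corollary. Moreover, since $\kappa = \min(\varepsilon, 1/2)$ does not depend on $\rho$, Theorem \ref{thm:independence_spearman_tail} applied with $\rho/2$ yields, for every $G_n \in \mathcal{G}_{\eta,\varepsilon}(f, f^\ast)$,
\[
	\Prob{\widetilde{\rho}(G_n) > \rho/2} \ge 1 - \bigO{n^{-1+\kappa} + n^{-\varepsilon+3\kappa/4} + \Prob{\Omega_n^c}}.
\]

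It then remains to pass from $\widetilde{\rho}(G_n)$ to $\rho(G_n)$. On the event in the last display we have $\widetilde{\rho}(G_n) > \rho/2 = \rho + |\rho|/2$, so it suffices to show that $|\rho(G_n) - \widetilde{\rho}(G_n)| < |\rho|/2$ outside an event whose probability is absorbed into the stated error. This is precisely the asymptotic equivalence of $\rho(G_n)$ and $\widetilde{\rho}(G_n)$ established in \cite{Hoorn2014a}: on $\Omega_n$ the exact uniform ranks $R_\ast, R^\ast$ differ from their $\mathcal{F}_n^\ast$-based surrogates only by a lower-order term, so $|\rho(G_n) - \widetilde{\rho}(G_n)| \to 0$ fast enough that it drops below the fixed constant $|\rho|/2$ except on an event of probability $\bigO{n^{-1+\kappa} + \Prob{\Omega_n^c}}$. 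Intersecting the two events gives $\rho(G_n) > \rho$ with the required probability.

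The only genuine obstacle is this last transfer step: one must confirm that the discrepancy between the exact uniform-rank functional $\rho(G_n)$ and its linearisation $\widetilde{\rho}(G_n)$ is, with the relevant probability, smaller than the constant slack $|\rho|/2$ built in by applying Theorem \ref{thm:independence_spearman_tail} at $\rho/2$, and that the associated failure probability is dominated by $n^{-1+\kappa} + n^{-\varepsilon+3\kappa/4} + \Prob{\Omega_n^c}$. Everything else — exhibiting the regularly varying seed $g$, invoking Theorem \ref{thm:independence_spearman_tail}, and checking that the tail exponent $\gamma$ is inherited by $F$ — is routine.
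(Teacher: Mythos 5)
Your proposal is correct and follows essentially the same route as the paper: invoke Theorem \ref{thm:independence_spearman_tail} on a regularly varying seed and then verify that the transformed tail $1-F_\rho$ remains regularly varying with exponent $\gamma$ via the three-factor ratio $\frac{1-F_\rho(tk)}{1-F(tk)}\cdot\frac{1-F(k)}{1-F_\rho(k)}\cdot\frac{\mathcal{L}(tk)}{\mathcal{L}(k)}$, which is exactly the computation in Section \ref{sec:spearman_tail_distribution}. The one place you are actually more careful than the paper is the passage from $\widetilde{\rho}(G_n)$ to $\rho(G_n)$ via the $\rho/2$ slack; the paper silently identifies the two measures, relying on the asymptotic equivalence cited from \cite{Hoorn2014a}, so your flagged "obstacle" is a real loose end in the original as well rather than a defect of your argument.
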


\subsection{Structure of the paper}

The rest of this paper is structured as follows. In Section \ref{ssec:algorithm} we describe the algorithm for generating graphs that solves the optimization problem \eqref{eq:approx_optimization_problem}. A complete characterization of the empirical and limiting joint degree distribution is then given in Section \ref{ssec:joint_degree_distribution}. We describe the construction of degree sequences with arbitrary small value of Spearman's rho in Section \ref{sec:spearman_tail_distribution}. In Section \ref{sec:spearman_experiments} we illustrate our results by providing simulations for maximally disassortative graphs where the degrees follow a scale-free and a Poisson distribution. Finally, Section \ref{sec:proofs} contains all the proofs of our results.

\section{Generating maximally disassortative graphs}\label{sec:algorithm}

We will describe an algorithm, called the \texttt{Disassortative Graph Algorithm} (\texttt{DGA}), that solves~\eqref{eq:approx_optimization_problem}. 

\subsection{The Disassortative Graph Algorithm}\label{ssec:algorithm}

Any degree sequence ${\bf D}_n$ can be represented by a list of stubs, where for each node $i$ we have $D_i$ stubs labeled $i$. A graph with degree sequence ${\bf D}_n$ is then completely determined by the pairing of the stubs. In order to describe our algorithm, let $N_k$ denote the number of nodes with degree $k$ and let $z_n$ be the unique integer satisfying
\begin{equation}\label{eq:relation_zn}
	\sum_{t = 1}^{z_n} tN_t \ge \frac{L_n}{2} \quad \text{and} \quad \sum_{t = 1}^{z_n - 1} tN_t < \frac{L_n}{2}.
\end{equation}   
The idea of the \texttt{Disassortative Graph Algorithm} is to use $z_n$ to divide the stubs in two 
columns. In the left column $S_n$ we add the stubs belonging to nodes with high degree ($D_i \ge z_n$), 
in descending order. The right column $T_n$ will be filled with stubs that belong to nodes with small 
degree ($D_i \le z_n$) in ascending order. After this ordering we start pairing stubs from the left column to stubs in the right column, until we reach the first pair $(i,j)$ for which $D_i = z_n = D_j$. We are now left with stubs belonging to nodes with degree $z_n$, hence the value of Spearman's rho \eqref{eq:spearmans_rho} will not be influenced by the specific way in which we connect them. This means that we can, in principle, use any algorithm to connect these medium degree nodes. We will use the  configuration model \cite{Bollobas1980,Molloy1995,Newman2001}, more specifically the repeated configuration model, see Section 7.4 in \cite{Hofstad2014a}. The full algorithm is described in detail below.

\begin{algorithm*}[!ht]
	\caption{\texttt{Disassortative Graph Algorithm}}
	\label{alg:ch6_disassortative_graphs}
	\begin{algorithmic}[1]
		\STATE Input: A degree seqeunce ${\bf D}_n$.
		\STATE Rank the nodes by their degrees in ascending order and let $\varrho(k)$ and denote the 
			node with rank $k$, i.e. $D_{\phi(n)} \ge D_{\phi(n-1)} \ge \dots \ge D_{\phi(2)} 
			\ge D_{\phi(1)}$.
		\STATE Create two empty lists $S_n$ and $T_n$.
		\STATE Set $i = n$ and $j = 1$.
		\WHILE{$D_{\phi(i)} \ge z_n$}
			\STATE Fill the next $D_{\phi(i)}$ slots of $S_n$ with stubs labeled $\phi(i)$.
			\STATE Set $i = i - 1$.
		\ENDWHILE
		\WHILE{$D_{\phi(j)} \le z_n$}
			\STATE Add to $T_n$, $D_{\phi(j)}$ copies of stubs labeled: $\phi(j), \dots, 
				\varrho(j+N_{D_{\phi(j)}}-1)$.
			\STATE Set $j = j+ N_{D_{\phi(j)}}$.
		\ENDWHILE
		\STATE Set $t = 1$, $i = S_n[t]$ and $j = T_n[t]$
		\WHILE{\NOT $D_i = z_n = D_j$} 
			\STATE Add edge $i - j$ to $G_n$.
			\STATE Set $t = t + 1$, $i = S_n[t]$ and $j = T_n[t]$.
		\ENDWHILE
		\STATE Set ${\bf D}^z_n$ to be the degree sequence corresponding to the remaining unpaired 
			stubs.
		\STATE Pair the stubs in ${\bf D}^z_n$ using the configuration model.
		\STATE Output: $G_n$.
	\end{algorithmic}
\end{algorithm*} 

We will denote by $G_n^\ast$ the induced sub-graph that has been created at the end of step 17 and 
let the compliment $G_n^z = G_n \setminus G_n^\ast$ denote the graph generated by the configuration
model in step 19. In addition we will write $G_n = \texttt{DGA}({\bf D}_n)$ if
$G_n$ is generated by the \texttt{Disassortative Graph Algorithm} with degree sequence ${\bf D}_n$
as input.  An illustration of the lists $S_n$ and $T_n$ is displayed in Figure 
\ref{fig:top_part_ordered_stubs}. 

\begin{figure}[!ht]
\centering
\begin{tikzpicture}

	\draw (0,2) -- (0,10) -- (2,10) -- (2,2);
	
    \draw node at (1,9.75) {$\phi(n)$};
    \draw (0,9.5) -- (2,9.5);
    
    \draw (0,8.5) -- (2,8.5);
    \draw node at (1,8.25) {$\phi(n)$};
	\draw [line width=2pt] (0,8) -- (2,8);
    \draw [decorate,decoration={brace},line width=1pt] (-0.2,8) -- (-0.2,10);
    \draw node at (-1,9) {$D_{\phi(n)}$};
    
    \draw node at (1,7.75) {$\phi(n-1)$};
    \draw (0,7.5) -- (2,7.5);
    
    \draw (0,6.5) -- (2,6.5);
    \draw node at (1,6.25) {$\phi(n-1)$};
    \draw [line width=2pt] (0,6) -- (2,6);
    \draw [decorate,decoration={brace},line width=1pt] (-0.2,6) -- (-0.2,8);
    \draw node at (-1,7) {$D_{\phi(n - 1)}$};
    
    \draw node at (1,5.75) {$\phi(n-2)$};
    \draw (0,5.5) -- (2,5.5);
    
    \draw (0,5) -- (2,5);
    \draw node at (1,4.75) {$\phi(n-2)$};
    \draw [line width=2pt] (0,4.5) -- (2,4.5);
    \draw [decorate,decoration={brace},line width=1pt] (-0.2,4.5) -- (-0.2,6);
    \draw node at (-1,5.25) {$D_{\phi(n - 2)}$};
    
	\draw (4,2) -- (4,10) -- (7,10) -- (7,2);
	
    \draw node at (5.5,9.75) {$\phi(1)$};
    \draw (4,9.5) -- (7,9.5);
    
    \draw (4,7.5) -- (7,7.5);
    \draw node at (5.5,7.25) {$\phi(N_1)$};
    \draw [line width=2pt] (4,7) -- (7,7);
    
    \draw [decorate,decoration={brace},line width=1pt] (7.2,10) -- (7.2,7);
    \draw node at (7.75,8.5) {$N_1$};
    
    \draw node at (5.5,6.75) {$\phi(N_1 + 1)$};
    \draw (4,6.5) -- (7,6.5);
    \draw (4,5.5) -- (7,5.5);
    \draw node at (5.5,5.25) {$\phi(N_1 + N_2 - 1)$};
    \draw [line width=2pt] (4,5) -- (7,5);
    
    \draw [decorate,decoration={brace},line width=1pt] (7.2,7) -- (7.2,5);
    \draw node at (7.75,6) {$N_2$};
    
    \draw node at (5.5,4.75) {$\phi(N_1 + 1)$};
    \draw (4,4.5) -- (7,4.5);
    \draw (4,3.5) -- (7,3.5);
    \draw node at (5.5,3.25) {$\phi(N_1 + N_2 - 1)$};
    \draw [line width=2pt] (4,3) -- (7,3);
    
    \draw [decorate,decoration={brace},line width=1pt] (7.2,5) -- (7.2,3);
    \draw node at (7.75,4) {$N_2$};
    
\end{tikzpicture}
\caption{Top part of the two lists of stubs.}
\label{fig:top_part_ordered_stubs}

\end{figure}
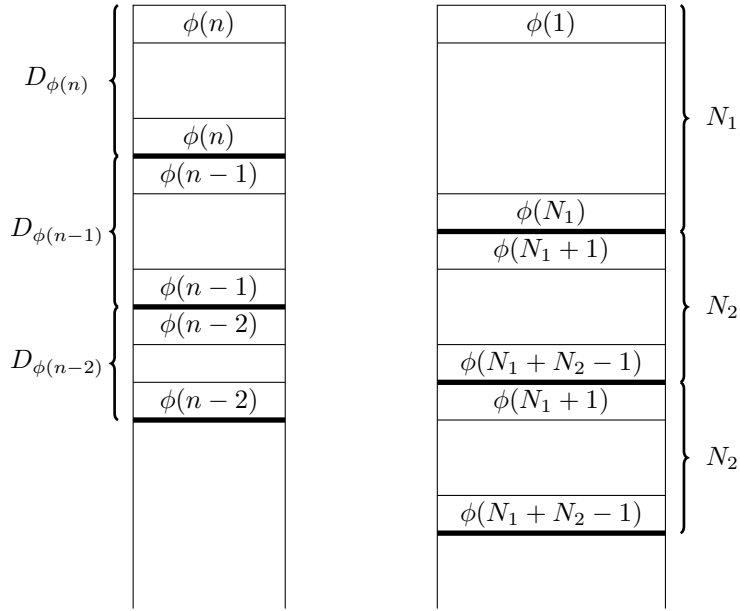

We will illustrate the \texttt{DGA} on the simple degree sequence $\{1,2,2,3\}$, see Figure 
\ref{fig:example_dga}. Observe that in this case $z_n = 2$. Figure \ref{sfig:dga_example_1} shows the initialization state where
the we have created the the lists $S_n$ and $T_n$ and no stubs have been connected. We start,
Figure \ref{sfig:dga_example_2}, by connecting the nodes at the top of the lists, $4$ and $1$. Then
we move down the lists, Figure \ref{sfig:dga_example_3}, and connect $4$ and $2$. The next step, 
Figure \ref{sfig:dga_example_4}, is where the specific way the algorithm ordered the stubs in both 
lists comes into play. 

There is one stub left on the node with the largest degree, node $4$. The 
smallest degree among the still available nodes is two. Therefore we want to connect node $4$ to
a node with degree two which are $2$ and $3$. However, since there is already an edge between $4$ 
and $2$, connecting them again will create multi-edges between these nodes. The ordering of the 
lists resolves this by making sure we first connected to each different node with the same degree 
before we can create an edge between two nodes that have already been connected. In this example
we therefore connect $4$ and $3$.

After this step the algorithm reaches a pair of nodes that both have degree $z_n = 2$, Figure 
\ref{sfig:dga_example_5}. This is where we stop and pair the remaining stubs using the 
configuration model. Since in this specific example only nodes $2$ and $3$ have a stub left,
we connect these, Figure \ref{sfig:dga_example_6}.

\begin{figure}[!ht]
	\centering
	\begin{subfigure}{0.48\textwidth}
		\centering
		\tikzstyle{vertex}=[fill, circle, minimum size=2pt]
\tikzstyle{edge}=[style=thick,line width=1,color=black]

\begin{tikzpicture}[scale=0.5]
	
	\draw node[vertex] (v1) at (4,3) {};
	\draw node[vertex] (v2) at (4,0) {};
	\draw node[vertex] (v3) at (0,0) {};
	\draw node[vertex] (v4) at (0,3) {};
	
	\draw node at (5,3) {\large $1$};
	\draw node at (5,0) {\large $2$};
	\draw node at (-1,0) {\large $3$};
	\draw node at (-1,3) {\large $4$};

	\draw [-][edge] (3,3) -- (v1);
	\draw [-][edge] (3,0.707) -- (v2);
	\draw [-][edge] (3,-0.707) -- (v2);
	\draw [-][edge] (v3) -- (1,0.707);
	\draw [-][edge] (v3) -- (1,-0.707);
	\draw [-][edge] (v4) -- (1,3.707);
	\draw [-][edge] (v4) -- (1,3);
	\draw [-][edge] (v4) -- (1,2.292);
	
	\draw (6,5) -- (7,5) -- (7,-2) -- (6,-2) -- (6,5);
	\draw node at (6.5,4.5) {\large $4$};
	\draw (6,4) -- (7,4);
	\draw node at (6.5,3.5) {\large $4$};
	\draw (6,3) -- (7,3);
	\draw node at (6.5,2.5) {\large $4$};
	\draw (6,2) -- (7,2);
	\draw node at (6.5,1.5) {\large $3$};
	\draw (6,1) -- (7,1);
	\draw node at (6.5,0.5) {\large $3$};
	\draw (6,0) -- (7,0);
	\draw node at (6.5,-0.5) {\large $2$};
	\draw (6,-1) -- (7,-1);
	\draw node at (6.5,-1.5) {\large $2$};
	
	\draw (8,5) -- (9,5) -- (9,0) -- (8,0) -- (8,5);
	\draw node at (8.5,4.5) {\large $1$};
	\draw (8,4) -- (9,4);
	\draw node at (8.5,3.5) {\large $2$};
	\draw (8,3) -- (9,3);
	\draw node at (8.5,2.5) {\large $3$};
	\draw (8,2) -- (9,2);
	\draw node at (8.5,1.5) {\large $2$};
	\draw (8,1) -- (9,1);
	\draw node at (8.5,0.5) {\large $3$};
	
\end{tikzpicture}
		\caption{Initialization}
		\label{sfig:dga_example_1}
	\end{subfigure}
	\begin{subfigure}{0.48\textwidth}
		\centering
		\tikzstyle{vertex}=[fill, circle, minimum size=2pt]
\tikzstyle{edge}=[style=thick,line width=1,color=black]

\begin{tikzpicture}[scale=0.5]
	
	\draw node[vertex] (v1) at (4,3) {};
	\draw node[vertex] (v2) at (4,0) {};
	\draw node[vertex] (v3) at (0,0) {};
	\draw node[vertex] (v4) at (0,3) {};
	
	\draw node at (5,3) {\large $1$};
	\draw node at (5,0) {\large $2$};
	\draw node at (-1,0) {\large $3$};
	\draw node at (-1,3) {\large $4$};

	\draw [-][edge] (v4) -- (v1);
	\draw [-][edge] (3,0.707) -- (v2);
	\draw [-][edge] (3,-0.707) -- (v2);
	\draw [-][edge] (v3) -- (1,0.707);
	\draw [-][edge] (v3) -- (1,-0.707);
	\draw [-][edge] (v4) -- (1,3.707);
	\draw [-][edge] (v4) -- (1,2.292);
	
	\draw (6,5) -- (7,5) -- (7,-2) -- (6,-2) -- (6,5);
	\draw node at (6.5,4.5) {\large $4$};
	\draw (6,4) -- (7,4);
	\draw node at (6.5,3.5) {\large $4$};
	\draw (6,3) -- (7,3);
	\draw node at (6.5,2.5) {\large $4$};
	\draw (6,2) -- (7,2);
	\draw node at (6.5,1.5) {\large $3$};
	\draw (6,1) -- (7,1);
	\draw node at (6.5,0.5) {\large $3$};
	\draw (6,0) -- (7,0);
	\draw node at (6.5,-0.5) {\large $2$};
	\draw (6,-1) -- (7,-1);
	\draw node at (6.5,-1.5) {\large $2$};
	
	\draw [<->][edge] (7,4.5) -- (8,4.5);
	
	\draw (8,5) -- (9,5) -- (9,0) -- (8,0) -- (8,5);
	\draw node at (8.5,4.5) {\large $1$};
	\draw (8,4) -- (9,4);
	\draw node at (8.5,3.5) {\large $2$};
	\draw (8,3) -- (9,3);
	\draw node at (8.5,2.5) {\large $3$};
	\draw (8,2) -- (9,2);
	\draw node at (8.5,1.5) {\large $2$};
	\draw (8,1) -- (9,1);
	\draw node at (8.5,0.5) {\large $3$};
	
\end{tikzpicture}
		\caption{First step}
		\label{sfig:dga_example_2}
	\end{subfigure}\\
	\vspace{5pt}
	\begin{subfigure}{0.48\textwidth}
		\centering
		\tikzstyle{vertex}=[fill, circle, minimum size=2pt]
\tikzstyle{edge}=[style=thick,line width=1,color=black]

\begin{tikzpicture}[scale=0.5]
	
	\draw node[vertex] (v1) at (4,3) {};
	\draw node[vertex] (v2) at (4,0) {};
	\draw node[vertex] (v3) at (0,0) {};
	\draw node[vertex] (v4) at (0,3) {};
	
	\draw node at (5,3) {\large $1$};
	\draw node at (5,0) {\large $2$};
	\draw node at (-1,0) {\large $3$};
	\draw node at (-1,3) {\large $4$};

	\draw [-][edge] (v4) -- (v1);
	\draw [-][edge] (v4) -- (v2);
	\draw [-][edge] (3,-0.707) -- (v2);
	\draw [-][edge] (v3) -- (1,0.707);
	\draw [-][edge] (v3) -- (1,-0.707);
	\draw [-][edge] (v4) -- (1,3.707);
	
	\draw (6,5) -- (7,5) -- (7,-2) -- (6,-2) -- (6,5);
	\draw node at (6.5,4.5) {\large $4$};
	\draw (6,4) -- (7,4);
	\draw node at (6.5,3.5) {\large $4$};
	\draw (6,3) -- (7,3);
	\draw node at (6.5,2.5) {\large $4$};
	\draw (6,2) -- (7,2);
	\draw node at (6.5,1.5) {\large $3$};
	\draw (6,1) -- (7,1);
	\draw node at (6.5,0.5) {\large $3$};
	\draw (6,0) -- (7,0);
	\draw node at (6.5,-0.5) {\large $2$};
	\draw (6,-1) -- (7,-1);
	\draw node at (6.5,-1.5) {\large $2$};
	
	\draw [<->][edge] (7,3.5) -- (8,3.5);
	
	\draw (8,5) -- (9,5) -- (9,0) -- (8,0) -- (8,5);
	\draw node at (8.5,4.5) {\large $1$};
	\draw (8,4) -- (9,4);
	\draw node at (8.5,3.5) {\large $2$};
	\draw (8,3) -- (9,3);
	\draw node at (8.5,2.5) {\large $3$};
	\draw (8,2) -- (9,2);
	\draw node at (8.5,1.5) {\large $2$};
	\draw (8,1) -- (9,1);
	\draw node at (8.5,0.5) {\large $3$};
	
\end{tikzpicture}
		\caption{Second step}
		\label{sfig:dga_example_3}
	\end{subfigure}
	\begin{subfigure}{0.48\textwidth}
		\centering
		\tikzstyle{vertex}=[fill, circle, minimum size=2pt]
\tikzstyle{edge}=[style=thick,line width=1,color=black]

\begin{tikzpicture}[scale=0.5]
	
	\draw node[vertex] (v1) at (4,3) {};
	\draw node[vertex] (v2) at (4,0) {};
	\draw node[vertex] (v3) at (0,0) {};
	\draw node[vertex] (v4) at (0,3) {};
	
	\draw node at (5,3) {\large $1$};
	\draw node at (5,0) {\large $2$};
	\draw node at (-1,0) {\large $3$};
	\draw node at (-1,3) {\large $4$};

	\draw [-][edge] (v4) -- (v1);
	\draw [-][edge] (v4) -- (v2);
	\draw [-][edge] (3,-0.707) -- (v2);
	\draw [-][edge] (v3) -- (1,-0.707);
	\draw [-][edge] (v4) -- (v3);
	
	\draw (6,5) -- (7,5) -- (7,-2) -- (6,-2) -- (6,5);
	\draw node at (6.5,4.5) {\large $4$};
	\draw (6,4) -- (7,4);
	\draw node at (6.5,3.5) {\large $4$};
	\draw (6,3) -- (7,3);
	\draw node at (6.5,2.5) {\large $4$};
	\draw (6,2) -- (7,2);
	\draw node at (6.5,1.5) {\large $3$};
	\draw (6,1) -- (7,1);
	\draw node at (6.5,0.5) {\large $3$};
	\draw (6,0) -- (7,0);
	\draw node at (6.5,-0.5) {\large $2$};
	\draw (6,-1) -- (7,-1);
	\draw node at (6.5,-1.5) {\large $2$};
	
	\draw [<->][edge] (7,2.5) -- (8,2.5);
	
	\draw (8,5) -- (9,5) -- (9,0) -- (8,0) -- (8,5);
	\draw node at (8.5,4.5) {\large $1$};
	\draw (8,4) -- (9,4);
	\draw node at (8.5,3.5) {\large $2$};
	\draw (8,3) -- (9,3);
	\draw node at (8.5,2.5) {\large $3$};
	\draw (8,2) -- (9,2);
	\draw node at (8.5,1.5) {\large $2$};
	\draw (8,1) -- (9,1);
	\draw node at (8.5,0.5) {\large $3$};
	
\end{tikzpicture}
		\caption{Third step}
		\label{sfig:dga_example_4}
	\end{subfigure}\\
	\vspace{5pt}
	\begin{subfigure}{0.48\textwidth}
		\centering
		\tikzstyle{vertex}=[fill, circle, minimum size=2pt]
\tikzstyle{edge}=[style=thick,line width=1,color=black]

\begin{tikzpicture}[scale=0.5]
	
	\draw node[vertex] (v1) at (4,3) {};
	\draw node[vertex] (v2) at (4,0) {};
	\draw node[vertex] (v3) at (0,0) {};
	\draw node[vertex] (v4) at (0,3) {};
	
	\draw node at (5,3) {\large $1$};
	\draw node at (5,0) {\large $2$};
	\draw node at (-1,0) {\large $3$};
	\draw node at (-1,3) {\large $4$};

	\draw [-][edge] (v4) -- (v1);
	\draw [-][edge] (v4) -- (v2);
	\draw [-][edge] (3,-0.707) -- (v2);
	\draw [-][edge] (v3) -- (1,-0.707);
	\draw [-][edge] (v4) -- (v3);
	
	\draw (6,5) -- (7,5) -- (7,-2) -- (6,-2) -- (6,5);
	\draw node at (6.5,4.5) {\large $4$};
	\draw (6,4) -- (7,4);
	\draw node at (6.5,3.5) {\large $4$};
	\draw (6,3) -- (7,3);
	\draw node at (6.5,2.5) {\large $4$};
	\draw (6,2) -- (7,2);
	\draw node at (6.5,1.5) {\large $3$};
	\draw (6,1) -- (7,1);
	\draw node at (6.5,0.5) {\large $3$};
	\draw (6,0) -- (7,0);
	\draw node at (6.5,-0.5) {\large $2$};
	\draw (6,-1) -- (7,-1);
	\draw node at (6.5,-1.5) {\large $2$};
	
	\draw [<->][edge] (7,1.5) -- (8,1.5);
	
	\draw (8,5) -- (9,5) -- (9,0) -- (8,0) -- (8,5);
	\draw node at (8.5,4.5) {\large $1$};
	\draw (8,4) -- (9,4);
	\draw node at (8.5,3.5) {\large $2$};
	\draw (8,3) -- (9,3);
	\draw node at (8.5,2.5) {\large $3$};
	\draw (8,2) -- (9,2);
	\draw node at (8.5,1.5) {\large $2$};
	\draw (8,1) -- (9,1);
	\draw node at (8.5,0.5) {\large $3$};
	
\end{tikzpicture}
		\caption{Reached $z_n$}
		\label{sfig:dga_example_5}
	\end{subfigure}
	\begin{subfigure}{0.48\textwidth}
		\centering
		\tikzstyle{vertex}=[fill, circle, minimum size=2pt]
\tikzstyle{edge}=[style=thick,line width=1,color=black]

\begin{tikzpicture}[scale=0.5]
	
	\draw node[vertex] (v1) at (4,3) {};
	\draw node[vertex] (v2) at (4,0) {};
	\draw node[vertex] (v3) at (0,0) {};
	\draw node[vertex] (v4) at (0,3) {};
	
	\draw node at (5,3) {\large $1$};
	\draw node at (5,0) {\large $2$};
	\draw node at (-1,0) {\large $3$};
	\draw node at (-1,3) {\large $4$};

	\draw [-][edge] (v4) -- (v1);
	\draw [-][edge] (v4) -- (v2);
	\draw [-][edge] (v3) -- (v2);
	\draw [-][edge] (v4) -- (v3);
	
	\draw (6,5) -- (7,5) -- (7,-2) -- (6,-2) -- (6,5);
	\draw node at (6.5,4.5) {\large $4$};
	\draw (6,4) -- (7,4);
	\draw node at (6.5,3.5) {\large $4$};
	\draw (6,3) -- (7,3);
	\draw node at (6.5,2.5) {\large $4$};
	\draw (6,2) -- (7,2);
	\draw node at (6.5,1.5) {\large $3$};
	\draw (6,1) -- (7,1);
	\draw node at (6.5,0.5) {\large $3$};
	\draw (6,0) -- (7,0);
	\draw node at (6.5,-0.5) {\large $2$};
	\draw (6,-1) -- (7,-1);
	\draw node at (6.5,-1.5) {\large $2$};
	
	\draw (8,5) -- (9,5) -- (9,0) -- (8,0) -- (8,5);
	\draw node at (8.5,4.5) {\large $1$};
	\draw (8,4) -- (9,4);
	\draw node at (8.5,3.5) {\large $2$};
	\draw (8,3) -- (9,3);
	\draw node at (8.5,2.5) {\large $3$};
	\draw (8,2) -- (9,2);
	\draw node at (8.5,1.5) {\large $2$};
	\draw (8,1) -- (9,1);
	\draw node at (8.5,0.5) {\large $3$};
	
\end{tikzpicture}
		\caption{Connected at random}
		\label{sfig:dga_example_6}
	\end{subfigure}
	\caption{Example of the \texttt{DGA} on a simple degree sequence with $z_n = 2$.}
	\label{fig:example_dga}
\end{figure}
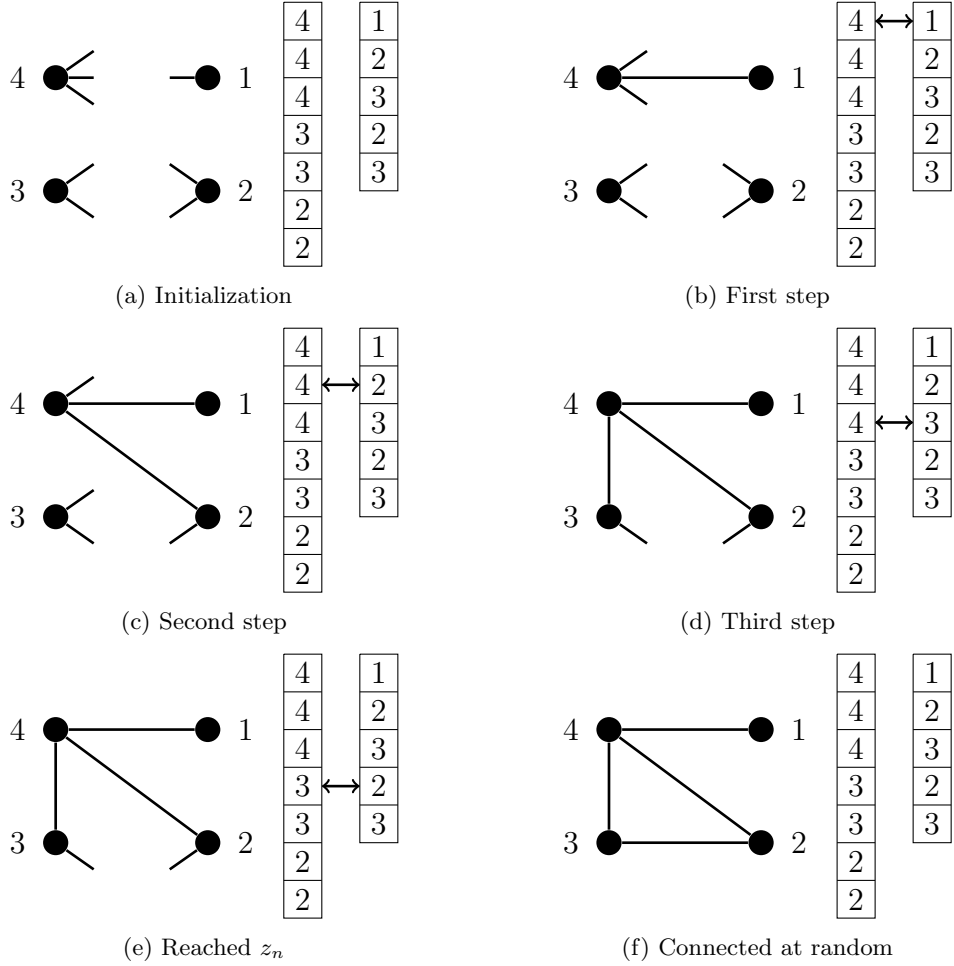

Although the \texttt{DGA} is defined for arbitrary degree sequences, in practice we would like to have ${\bf D}_n \in \mathcal{D}_{\eta, \,\varepsilon}(f, f^\ast)$ for some $\eta,\varepsilon > 0$ and distributions $f$ and $f^\ast$. A well known algorithm for generating degree sequences with a given distribution $f$ is by sampling the degrees i.i.d. from the distribution and then increase the last degree by $1$ if the sum was not even. We will refer to this as the \texttt{IID} algorithm. The following lemma states that when the distribution from which the degrees are sampled has just a bit more than finite mean, the resulting degree sequence satisfies Assumption \ref{asmp:regularity_distributions}.

\begin{lemma}\label{lem:iid_algorithm}
Let $D$ be an integer valued random variable with a distribution $f$, such that $\Exp{D^{1 + \eta}} < \infty$ for some $\eta > 0$. Denote by $\mu$ the mean of $f$ and define $f^\ast(k) = \Exp{D\ind{D = k}}/\mu$. Then if  ${\bf D}_n$ is generated by the \texttt{IID} algorithm by sampling from $f$, 
\[
	{\bf D}_n \in \mathcal{D}_{\eta, \, \varepsilon}(f, f^\ast) \quad \text{for any}
    \quad \varepsilon \le \eta/(8 + 4\eta).
\]
Moreover,
\[
	\Prob{\Omega_n} \ge 1 - \bigO{n^{-\varepsilon}},
\]
as $n \to \infty$.
\end{lemma}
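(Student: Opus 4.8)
The plan is to run the argument on the i.i.d.\ sample $\widetilde{\bf D}_n=\{\widetilde D_1,\dots,\widetilde D_n\}$ drawn from $f$, bound the two distances appearing in the definition of $\Omega_n$, and then check that the parity correction in the \texttt{IID} algorithm is harmless. That correction moves mass $1/n$ from degree $\widetilde D_n$ to $\widetilde D_n+1$ and changes the normalisation of $f_n^\ast$ from $\widetilde L_n$ to $\widetilde L_n+1$, so it perturbs $\sum_k|F_n(k)-F(k)|$ by at most $1/n$ and $\sum_k|f_n^\ast(k)-f^\ast(k)|$ by $\bigO{\widetilde D_n/n}$; since $\Prob{\widetilde D_n>\sqrt n}\le\Exp{D^{1+\eta}}n^{-(1+\eta)/2}=\bigO{n^{-\varepsilon}}$ by Markov and $\varepsilon\le\eta/(8+4\eta)<1/2$, this perturbation is of smaller order than $n^{-\varepsilon}$ on a high-probability event. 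It therefore suffices to control $\sum_k|F_n(k)-F(k)|$ (the Kantorovich--Rubinstein distance) and $\sum_k|f_n^\ast(k)-f^\ast(k)|$ (the $1$-norm distance) for the i.i.d.\ sequence.

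For both I would truncate at a level $M_n$, to be chosen, and split each sum into a bulk part ($k\le M_n$) and a tail part ($k>M_n$). The tails are easy: $\sum_{k>M_n}(1-F(k))=\Exp{(D-M_n)^+}\le M_n^{-\eta}\Exp{D^{1+\eta}}$ and $\sum_{k>M_n}f^\ast(k)=\Exp{D\ind{D>M_n}}/\mu\le M_n^{-\eta}\Exp{D^{1+\eta}}/\mu$, while the empirical tails $\tfrac1n\sum_i(D_i-M_n)^+$ and, on the event $A_n=\{L_n\ge n\mu/2\}$, the quantity $\tfrac1{L_n}\sum_iD_i\ind{D_i>M_n}\le\tfrac2{n\mu}\sum_iD_i\ind{D_i>M_n}$ have expectations of the same order $\bigO{M_n^{-\eta}}$. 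For the bulk I would use that each $F_n(k)$ (resp.\ $f_n(k)$) is an average of i.i.d.\ Bernoulli variables, so $\Exp{|F_n(k)-F(k)|}\le(2\sqrt n)^{-1}$ and $\Exp{|f_n(k)-f(k)|}\le(2\sqrt n)^{-1}$, giving $\Exp{\sum_{k\le M_n}|F_n(k)-F(k)|}=\bigO{M_n/\sqrt n}$.

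The size-biased term is the crux, since $f_n^\ast$ carries the random and possibly heavy-tailed normalisation $L_n$. Writing $f_n^\ast(k)=kf_n(k)/\overline D_n$ with $\overline D_n=L_n/n$ and using $f_n^\ast(k)-f^\ast(k)=\frac{k}{\overline D_n}\big(f_n(k)-f(k)\big)+kf(k)\big(\tfrac1{\overline D_n}-\tfrac1\mu\big)$, one gets on $A_n$ that $\sum_{k\le M_n}|f_n^\ast(k)-f^\ast(k)|\le\frac{2M_n}{\mu}\sum_{k\le M_n}|f_n(k)-f(k)|+\frac2\mu|\overline D_n-\mu|$. The first term has expectation $\bigO{M_n^2/\sqrt n}$ by the Bernoulli bound; for $\Exp{|\overline D_n-\mu|}$ I would invoke the Marcinkiewicz--Zygmund / von Bahr--Esseen inequality for sums of i.i.d.\ variables with finite $(1+\eta)$-th moment, which yields $\Exp{|\overline D_n-\mu|}=\bigO{n^{-\eta/(1+\eta)}+n^{-1/2}}$ (this also bounds $\Prob{A_n^c}$ via Markov). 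Balancing the bulk rate $M_n^2/\sqrt n$ against the tail rate $M_n^{-\eta}$ pins down $M_n=n^{1/(2(2+\eta))}$, for which both equal $n^{-\eta/(2(2+\eta))}$ while $M_n/\sqrt n$ and $\Exp{|\overline D_n-\mu|}$ are of smaller order; hence $\Exp{\sum_k|F_n(k)-F(k)|}$ and $\Exp{\ind{A_n}\sum_k|f_n^\ast(k)-f^\ast(k)|}$ are both $\bigO{n^{-\eta/(2(2+\eta))}}$.

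Finally I would conclude by Markov's inequality: for $\varepsilon\le\eta/(8+4\eta)=\eta/(4(2+\eta))$ one has $\varepsilon-\eta/(2(2+\eta))\le-\varepsilon$, so $\Prob{\sum_k|F_n(k)-F(k)|>n^{-\varepsilon}}$ and $\Prob{\ind{A_n}\sum_k|f_n^\ast(k)-f^\ast(k)|>n^{-\varepsilon}}$ are each $\bigO{n^{-\varepsilon}}$. A union bound with $\Prob{A_n^c}$, $\Prob{\widetilde D_n>\sqrt n}$ and the deterministic parity perturbation then gives $\Prob{\Omega_n^c}=\bigO{n^{-\varepsilon}}$, i.e.\ $\Prob{\Omega_n}\ge1-\bigO{n^{-\varepsilon}}$, which in particular forces $\Prob{\Omega_n}\to1$ and hence ${\bf D}_n\in\mathcal D_{\eta,\varepsilon}(f,f^\ast)$. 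I expect the size-biased term to be the main obstacle: it forces the decomposition above to handle the random normalisation $L_n$, and it is where the fluctuation bound for $\overline D_n$ under only a finite $(1+\eta)$-th moment enters. The exponent $\eta/(8+4\eta)$ in the statement is exactly what survives the two-stage loss here: the factor $M_n^2$ (rather than $M_n$) from the crude bound $\sum_{k\le M_n}k|f_n(k)-f(k)|\le M_n\sum_{k\le M_n}|f_n(k)-f(k)|$ in the truncation balance, and then the factor $n^{\varepsilon}$ given up in passing from an $L^1$ bound to a probability bound.
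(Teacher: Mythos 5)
Your proof is correct and follows essentially the same strategy as the paper's: discard the parity correction as a lower-order perturbation, truncate both sums at a polynomial level, bound the bulk by second-moment/Bernoulli estimates of order $\mathrm{poly}(M_n)/\sqrt{n}$ and the tails via the finite $(1+\eta)$-th moment giving $M_n^{-\eta}$, and close with Markov's inequality — the balance of these two rates is exactly what produces the exponent $\eta/(8+4\eta)$. The only differences are cosmetic: the paper cites an external result for the rate of $\Exp{d_1(f_n,f)}$ in the Kantorovich--Rubinstein distance where you re-derive it by the same truncation device, and it controls the normalisation $|L_n-\mu n|$ through the Wasserstein event rather than through a separate von Bahr--Esseen bound on $\overline{D}_n$.
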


\subsection{Joint degree distribution of maximally disassortative graphs} \label{ssec:joint_degree_distribution}

Before we turn to analysis of the \texttt{DGA} it is useful to look at the empirical joint degree distribution of graphs generated by the algorithm. We will give a complete characterization of both the empirical and limiting joint degree distributions in Proposition \ref{prop:empirical_joint_degree_distribution} and Theorem \ref{thm:joint_degree_distribution}, respectively.

In order to analyze the structure of the joint degree distribution we approach the algorithm from
a different angle. First observe that if we are only interested in the degrees $D_i$ and $D_j$ for an 
undirected edge $i - j$, then the specific way in which the stubs are ordered by the 
algorithm is irrelevant for $h^\ast_n(k,\ell)$. This means we do not have to consider the label of the nodes to which 
stubs belong, only their degree. Note that the number of stubs belonging to nodes of degree $k$ 
equals $k N_k$. 
Moreover, due to the symmetry in the transition to directed edges, by replacing an edge $i - j$ with 
edges $i \to j$ and $j \to i$, the directed structure of the graph generated by \texttt{DGA} can be seen 
as follows. 

Consider the partition of the set $\{1, 2, \dots, L_n\}$ given by $kN_k$ for $k = 0,1, 
\dots$, represented as a line of length $L_n$ partitioned into intervals of size $kN_k$. Now take 
a copy of this partitioned line, reverse it and place it below the original one, see Figure 
\ref{fig:edge_degree_structure_dga}. Both lines can be seen as the lists of all stubs, ordered
by the degree of the nodes to which they belong. For the top line the stubs are ordered, from left 
to right, in increasing order of the degree, while for the bottom line the degrees are in decreasing
order. Then the \texttt{DGA} can be seen as creating directed edges $i \to j$ between the nodes $i$ 
corresponding to the stubs on the bottom line and nodes $j$ corresponding to stubs in the top line.

\begin{figure}
	\centering
	\begin{tikzpicture}
		\draw (0,2) -- (14,2);
        \draw (0,1.8) -- (0,2.2);
        \draw node at (0.6,2.5) {$N_1$};
        \draw (1.2,1.8) -- (1.2,2.2);
        \draw node at (1.9,2.5) {$2 N_2$};
        \draw (2.6,1.8) -- (2.6,2.2);
        \draw node at (3.1,2.5) {$\dots$};
        \draw (3.6,1.8) -- (3.6,2.2);
        \draw node at (4.2,2.5) {$\ell N_\ell$};
        \draw (4.8,1.8) -- (4.8,2.2);
        \draw (5.8,1.8) -- (5.8,2.2);
        \draw node at (6.8,2.5) {$z_n N_{z_n}$};
        \draw (7.8,1.8) -- (7.8,2.2);
        \draw (9.8,1.8) -- (9.8,2.2);
        \draw node at (10.4,2.5) {$kN_k$};
        \draw (11,1.8) -- (11,2.2);
        \draw (14,1.8) -- (14,2.2);
        
        \draw [dotted] (3.6,0.3) -- (3.6,1.7);
        \draw node at (3.9,1.6) {$j$};
        \draw [->] (3.9,0.8) -- (3.9,1.2);
        \draw node at (3.9,0.4) {$i$};
        \draw [dotted] (4.2,0.3) -- (4.2,1.7);
        
        \draw (0,0) -- (14,0);
        \draw (0,-0.2) -- (0,0.2);
        \draw (3,-0.2) -- (3,0.2);
        \draw node at (3.6,-0.5) {$kN_k$};
        \draw (4.2,-0.2) -- (4.2,0.2);
         \draw node at (5.2,-0.5) {$\dots$};
        \draw (6.2,-0.2) -- (6.2,0.2);
        \draw node at (7.2,-0.5) {$z_n N_{z_n}$};
        \draw (8.2,-0.2) -- (8.2,0.2);

        \draw (9.2,-0.2) -- (9.2,0.2);
        \draw node at (9.8,-0.5) {$\ell N_\ell$};
        \draw (10.4,-0.2) -- (10.4,0.2);
        
        \draw (11.4,-0.2) -- (11.4,0.2);
        \draw node at (12.1,-0.5) {$2 N_2$};
        \draw (12.8,-0.2) -- (12.8,0.2);
        \draw node at (13.4,-0.5) {$N_1$};
        \draw (14,-0.2) -- (14,0.2);
	\end{tikzpicture}
    \caption{}
    \label{fig:edge_degree_structure_dga}
\end{figure}

From this representation we observe that an edge $i \to j$ between 
nodes of degree $D_i = k$ and $D_j = \ell$ exists if and only if the interval corresponding to 
$kN_k$ in the partitioned bottom line has an intersection with the interval corresponding to $\ell 
N_\ell$ in the partitioned upper line. In terms of $N_t$ this holds, if and only if,
\begin{equation}\label{eq:existence_edge_dga}
	\sum_{t = k + 1}^\infty tN_t < \sum_{t = 1}^\ell tN_t \quad \text{and} \quad
  \sum_{t = 1}^{\ell - 1} tN_t < \sum_{t = k}^\infty tN_t.
\end{equation}
Moreover, the number of edges that connect nodes of degree $k$ and $\ell$ is equal to the 
size of the intersection,
\begin{equation}\label{eq:number_edges_dga}
	\min\left\{\sum_{t = k}^\infty tN_t, \sum_{t = 1}^\ell tN_t\right\} 
	- \max\left\{\sum_{t = k + 1}^\infty tN_t, \sum_{t = 1}^{\ell - 1} tN_t\right\}.
\end{equation}

This partitioned representation of both the \texttt{DGA} and the joint degree structure, as displayed 
in Figure \ref{fig:edge_degree_structure_dga}, will be crucial for the analysis of the 
structure of maximally disassortative graphs. 

First let $\psi_n(k,\ell)$ denote the indicator that there exists a directed edge $i \to j$ with 
$D_i = k$ and $D_j = \ell$. Then since for any $k \ge 0$,
\[
	\frac{1}{L_n} \sum_{t = 0}^k tN_t = \frac{1}{L_n} \sum_{t = 0}^k t \sum_{i = 1}^n 
	\ind{D_i = t} = F_n^\ast(k), 
\]
it follows from \eqref{eq:existence_edge_dga} that
\begin{equation}\label{eq:def_psi_n}
	\psi_n(k,\ell) := \ind{1 - F_n^\ast(k) < F_n^\ast(\ell)}\ind{1 - F_n^\ast(k - 1) > 
    	F_n^\ast(\ell - 1)}.
\end{equation}
Moreover, if we let $\mathcal{E}_n(k, \ell)$ denote the average number of edges between nodes of degree $k$ 
and $\ell$, then \eqref{eq:number_edges_dga} implies that
\begin{equation}\label{eq:def_mathcal_E_n}
	\mathcal{E}_n(k, \ell) = \min(1 - F_n^\ast(k - 1), F_n^\ast(\ell)) - \max(1 - F_n^\ast(k), 
    	F_n^\ast(\ell - 1)).
\end{equation}
Summarizing we therefore have the following result.

\begin{proposition}\label{prop:empirical_joint_degree_distribution}
Let $G_n = \emph{\texttt{DGA}}({\bf D}_n)$, for some degree sequence ${\bf D}_n$ and
define the functions $\psi_n$ and $\mathcal{E}_n$, on the positive integers by
\begin{align}
	\psi_n(k,\ell) &= \ind{1 - F_n^\ast(k) < F_n^\ast(\ell)}\ind{1 - F_n^\ast(k - 1) > 
    	F_n^\ast(\ell - 1)} \quad \text{and} \\
    \mathcal{E}_n(k, \ell) &= \min(1 - F_n^\ast(k - 1), F_n^\ast(\ell)) - \max(1 - F_n^\ast(k), 
    	F_n^\ast(\ell - 1)).
\end{align} 
Then,
\[
	h_n(k, \ell) = \psi_n(k,\ell)\mathcal{E}_n(k, \ell).
\]
\end{proposition}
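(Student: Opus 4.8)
The plan is to prove Proposition \ref{prop:empirical_joint_degree_distribution} by carefully justifying the geometric picture in Figure \ref{fig:edge_degree_structure_dga}, which reduces the statement to the two conditions \eqref{eq:existence_edge_dga} and the count \eqref{eq:number_edges_dga}, and then translating these into $F_n^\ast$ via the identity $\frac{1}{L_n}\sum_{t=0}^k tN_t = F_n^\ast(k)$. First I would observe, as noted in the text before the statement, that $h_n(k,\ell)$ is invariant under any reordering of stubs that preserves which degree class each stub belongs to; hence we may work with the two ordered lists of stubs (top list: stubs sorted by increasing degree of their owner; bottom list: same set reversed) and with the directed edge set produced by \texttt{DGA}, which pairs the $t$-th stub of the bottom list with the $t$-th stub of the top list for $t = 1, \dots, L_n$. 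The key combinatorial fact to establish is that this pairing is exactly the one induced by overlaying the two partitioned intervals of length $L_n$: stub positions $1, \dots, L_n$ on the bottom correspond bijectively (via $t \mapsto t$) to positions on the top, and a bottom stub belonging to a degree-$k$ node is matched to a top stub belonging to a degree-$\ell$ node precisely when position $t$ lies both in the degree-$k$ block of the bottom partition and in the degree-$\ell$ block of the top partition.

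Next I would make the block boundaries explicit. On the top line, the block for degree $\ell$ occupies positions $\left(\sum_{t=1}^{\ell-1} tN_t,\ \sum_{t=1}^{\ell} tN_t\right]$. On the bottom line, which is the reversal of the partition by $tN_t$, the block for degree $k$ occupies the positions complementary to the top block for degree $k$ read from the other end, i.e. $\left(L_n - \sum_{t=k}^{\infty} tN_t,\ L_n - \sum_{t=k+1}^{\infty} tN_t\right] = \left(\sum_{t=k+1}^{\infty} tN_t',\ \sum_{t=k}^{\infty} tN_t'\right)$ after accounting for the finite support — more cleanly, the degree-$k$ bottom block is the set of positions $t$ with $\sum_{s=k+1}^{\infty} sN_s < t \le \sum_{s=k}^{\infty} sN_s$. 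A directed edge between a degree-$k$ node and a degree-$\ell$ node exists iff these two half-open intervals intersect, which is the condition that the left endpoint of each is strictly below the right endpoint of the other: $\sum_{t=k+1}^\infty tN_t < \sum_{t=1}^\ell tN_t$ and $\sum_{t=1}^{\ell-1} tN_t < \sum_{t=k}^\infty tN_t$, i.e. exactly \eqref{eq:existence_edge_dga}. The number of such edges is the length of the intersection of two intervals, namely $\min$ of the two right endpoints minus $\max$ of the two left endpoints, giving \eqref{eq:number_edges_dga}.

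From here the proposition follows by division by $L_n$ and the stated identity: $\frac{1}{L_n}\sum_{t=0}^{k} tN_t = F_n^\ast(k)$, so $\frac{1}{L_n}\sum_{t=k}^{\infty} tN_t = 1 - F_n^\ast(k-1)$ and $\frac{1}{L_n}\sum_{t=k+1}^{\infty} tN_t = 1 - F_n^\ast(k)$. Substituting these into \eqref{eq:existence_edge_dga} yields $1 - F_n^\ast(k) < F_n^\ast(\ell)$ and $1 - F_n^\ast(k-1) > F_n^\ast(\ell-1)$, which is $\psi_n(k,\ell)$ as in \eqref{eq:def_psi_n}; substituting into $\frac{1}{L_n}$ times \eqref{eq:number_edges_dga} gives $\min(1 - F_n^\ast(k-1), F_n^\ast(\ell)) - \max(1 - F_n^\ast(k), F_n^\ast(\ell-1)) = \mathcal{E}_n(k,\ell)$. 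Since $h_n(k,\ell) = \frac{1}{L_n}\#\{i \to j \in G_n : D_i = k, D_j = \ell\}$ equals $\psi_n(k,\ell)$ times this normalized intersection length (the indicator kills the term when the intervals are disjoint, in which case the difference would be negative or zero), we obtain $h_n(k,\ell) = \psi_n(k,\ell)\mathcal{E}_n(k,\ell)$.

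The main obstacle, and the step deserving the most care, is the first one: verifying rigorously that the edge set \texttt{DGA} produces — which pairs stubs in the order $S_n[t] \leftrightarrow T_n[t]$ and then switches to the configuration model once it hits a pair with $D_i = z_n = D_j$ — genuinely realizes the interval-overlay pairing for \emph{all} degree pairs $(k,\ell)$, including those near the changeover point $z_n$ and those handled by the configuration model on $G_n^z$. One must check that \texttt{DGA}'s specific ordering of stubs within a degree class (cycling through distinct nodes of that degree before repeating) does not alter the counts $h_n(k,\ell)$, which is true precisely because $h_n$ only records the pair of degrees and not which nodes they belong to; and that the stubs left over for the configuration model are exactly those in the degree-$z_n$ blocks straddling the midpoint $L_n/2$, so that the contribution of $G_n^z$ to $h_n(z_n, z_n)$ is consistent with the interval picture (here one uses the defining property \eqref{eq:relation_zn} of $z_n$). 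Once this bookkeeping is pinned down, the rest is the elementary interval-intersection computation above.
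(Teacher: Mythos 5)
Your proposal is correct and follows essentially the same route as the paper: the interval-overlay representation of the stub lists, the intersection conditions \eqref{eq:existence_edge_dga} and count \eqref{eq:number_edges_dga}, and the translation into $F_n^\ast$ via $\frac{1}{L_n}\sum_{t=0}^k tN_t = F_n^\ast(k)$. You are in fact somewhat more careful than the paper, which treats this proposition as a summary of the preceding discussion and does not explicitly verify the bookkeeping at the changeover point $z_n$ or the consistency of the configuration-model step with the overlay picture.
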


From Proposition \ref{prop:empirical_joint_degree_distribution} we obtain the limiting joint degree distribution as defined in \eqref{eq:joint_degree_density}, when ${\bf D}_n \in 
\mathcal{D}_{\eta, \, \varepsilon}(f,f^\ast)$. 

\begin{theorem}\label{thm:joint_degree_distribution}
Let ${\bf D}_n \in \mathcal{D}_{\eta, \varepsilon}(f,f^\ast)$ and $G_n = \emph{\texttt{DGA}}({\bf D}_n)$. In 
addition let $h(k,ell)$ be as defined in \eqref{eq:joint_degree_density}, take $0 < \delta < \varepsilon$, $K > 0$ and define the event
\[
	\Xi_n = \left\{\sum_{k, \ell = 0}^\infty \left|h_n(k,\ell) - h(k, \ell)\right| \le 
	K n^{-\delta}\right\}.
\]
Then
\[
	\Prob{\Xi_n} \ge 1 - \bigO{n^{-\varepsilon + \delta} + \Prob{\Omega_n^c}},
\]
as $n \to \infty$.
\end{theorem}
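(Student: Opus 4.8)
The plan is to estimate $\sum_{k,\ell}|h_n(k,\ell)-h(k,\ell)|$ by combining Proposition~\ref{prop:empirical_joint_degree_distribution} with the regularity in $\Omega_n$, using a head/tail split at a growing threshold. The first step is to notice that $\psi(k,\ell)$ is precisely the indicator that $\mathcal{E}(k,\ell)>0$, so that $h(k,\ell)=\max(\mathcal{E}(k,\ell),0)$ and, similarly, $h_n(k,\ell)=\max(\mathcal{E}_n(k,\ell),0)$. Since $x\mapsto\max(x,0)$ and the maps $\min$, $\max$ are $1$-Lipschitz in each coordinate, and the second term in $\Omega_n$ bounds $\sup_m|F_n^\ast(m)-F^\ast(m)|$ by $\sum_m|f_n^\ast(m)-f^\ast(m)|\le n^{-\varepsilon}$, we get on $\Omega_n$ the pointwise estimate
\[
	|h_n(k,\ell)-h(k,\ell)|\le|\mathcal{E}_n(k,\ell)-\mathcal{E}(k,\ell)|\le 2n^{-\varepsilon}.
\]
This is not summable, so I would fix a threshold $M=M_n\to\infty$, polynomial in $n$, and split the sum over $\{k\le M,\ \ell\le M\}$ and its complement.

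For the tail, the folding picture of Figure~\ref{fig:edge_degree_structure_dga} gives $\sum_\ell\sum_{k>M}h(k,\ell)=\sum_k\sum_{\ell>M}h(k,\ell)=1-F^\ast(M)$, and since $f^\ast(k)=kf(k)/\mu$ the assumption $\sum_k k^{1+\eta}f(k)<\infty$ yields $1-F^\ast(M)=\bigO{M^{-\eta}}$; on $\Omega_n$ the corresponding quantities for $h_n$ exceed these by at most $n^{-\varepsilon}$. Hence the tail contributes $\bigO{M^{-\eta}+n^{-\varepsilon}}$.

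For the head, the structural input is that the pairs $(k,\ell)$ with $k,\ell\le M$ and $h(k,\ell)>0$ — the edges of the interval–overlap graph of the two partitions of $[0,1]$ restricted to the band $[1-F^\ast(M),F^\ast(M)]$ — form a monotone staircase with only $\bigO{M}$ cells, since each cell boundary is a cut-point $F^\ast(\ell)$ or $1-F^\ast(k)$ with index at most $M$; on $\Omega_n$ the same holds for $h_n$. Multiplying $\bigO{M}$ cells by the cell bound $2n^{-\varepsilon}$ gives a head bound $\bigO{Mn^{-\varepsilon}}$, but this is wasteful: consecutive staircase cells share a boundary cut-point, so the cell-wise errors telescope and the head part is in fact bounded by $4\sum_{m\le M}|F_n^\ast(m)-F^\ast(m)|$. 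Using the size-biasing identity $F_n^\ast(m)=\bigl(mF_n(m)-\sum_{j<m}F_n(j)\bigr)/\mu_n$ (and its analogue for $F^\ast$), the estimate $|\mu_n-\mu|\le\sum_j|F_n(j)-F(j)|\le n^{-\varepsilon}$ from the first term of $\Omega_n$, and Abel summation, this reduces to controlling the weighted quantity $\sum_{m\le M}m\,|F_n(m)-F(m)|$ on top of the already-controlled $\sum_j|F_n(j)-F(j)|$.

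\textbf{The main obstacle} is the bound on $\sum_{m\le M}m\,|F_n(m)-F(m)|$. Deterministically on $\Omega_n$ one only has $\sum_{m\le M}m\,|F_n(m)-F(m)|\le Mn^{-\varepsilon}$, and balancing $Mn^{-\varepsilon}$ against the tail term $M^{-\eta}$ yields no better than $n^{-\varepsilon\eta/(1+\eta)}$, which falls short of $n^{-\delta}$ when $\delta$ is close to $\varepsilon$. To reach the full range $\delta<\varepsilon$ I would not argue purely on $\Omega_n$: instead I expect one must estimate the \emph{expected} discrepancy, $\Exp{\sum_{k,\ell}|h_n(k,\ell)-h(k,\ell)|\,\ind{\Omega_n}}=\bigO{n^{-\varepsilon}}$, by bounding $\Exp{|F_n(m)-F(m)|}$ and using $1-F(m)=\bigO{m^{-1-\eta}}$ to sum the weighted head error and the tail at once, and then apply Markov's inequality to obtain $\Prob{\Xi_n^c,\ \Omega_n}\le\bigO{n^{-\varepsilon}}/(Kn^{-\delta})=\bigO{n^{-\varepsilon+\delta}}$; together with $\Prob{\Omega_n^c}$ and a polynomial choice of $M_n$ balancing the head, tail and Markov contributions, this gives $\Prob{\Xi_n}\ge1-\bigO{n^{-\varepsilon+\delta}+\Prob{\Omega_n^c}}$, the restriction $\delta<\varepsilon$ arising precisely from the need for the expected-error bound to beat $n^{-\delta}$. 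The routine parts I would not grind through are the exact constant in the staircase cell-count, the Abel-summation bookkeeping identifying $\sum_{m\le M}|F_n^\ast(m)-F^\ast(m)|$ with $\sum_{m\le M}m\,|F_n(m)-F(m)|$ up to lower-order terms, and the elementary moment estimates behind the expectation bound.
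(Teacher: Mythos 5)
There is a genuine gap, and you have in fact located it yourself: your head/tail split at a growing threshold $M_n$ only yields, on $\Omega_n$, a head error of order $M_n n^{-\varepsilon}$ against a tail error of order $M_n^{-\eta}$, hence a rate $n^{-\varepsilon\eta/(1+\eta)}$ that does not reach $Kn^{-\delta}$ for $\delta$ close to $\varepsilon$. The repair you propose — bounding $\Exp{|F_n(m)-F(m)|}$ and summing weighted expected errors — is not available under the hypotheses of the theorem: ${\bf D}_n\in\mathcal{D}_{\eta,\varepsilon}(f,f^\ast)$ only provides the event-based control in Assumption \ref{asmp:regularity_distributions} (the two $\ell^1$-type bounds on $\Omega_n$), and the degree sequence is not assumed to be an i.i.d.\ sample from $f$, so no moment estimates of the type $\Exp{|F_n(m)-F(m)|}\lesssim\sqrt{(1-F(m))/n}$ can be invoked; within $\Omega_n$ the deterministic bounds you derived are essentially sharp, and adversarial sequences consistent with $\Omega_n$ can concentrate the discrepancy at degrees near $M_n$.

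The idea you are missing is the one the paper's proof is built on: because $f^\ast(1)>0$ (a standing assumption), there is a \emph{fixed, finite} threshold $p$, the smallest integer with $1-F^\ast(p)<f^\ast(1)$, such that in the limiting structure every $k>p$ is matched only with $\ell=1$, where $\mathcal{E}(k,1)=f^\ast(k)$ exactly; defining $p_n$ analogously for $F_n^\ast$, one shows $\Prob{p_n\ne p,\Omega_n}=\bigO{n^{-\varepsilon}}$, and on $\{p_n=p\}\cap\Omega_n$ the same arm structure holds empirically with $\mathcal{E}_n(k,1)=f_n^\ast(k)$. Consequently the infinite part of the double sum is not bounded by the tail \emph{masses} $1-F^\ast(M)$ (which is where your $M^{-\eta}$ loss comes from) but collapses exactly to $\sum_{k>p}|f_n^\ast(k)-f^\ast(k)|\le n^{-\varepsilon}$, and only the fixed $p\times p$ block remains, which the paper handles termwise via Lemmas \ref{lem:technical_psi_n_vs_psi} and \ref{lem:technical_mathcalE_n_vs_mathcalE} and Markov's inequality, giving the full rate $n^{-\varepsilon+\delta}$ for every $\delta<\varepsilon$. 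Your opening observation that $h=\max(\mathcal{E},0)$, $h_n=\max(\mathcal{E}_n,0)$ and hence $|h_n-h|\le|\mathcal{E}_n-\mathcal{E}|\le 2\sup_m|F_n^\ast(m)-F^\ast(m)|$ is correct and would actually streamline the finite-block part of the paper's case analysis; but without the fixed-$p$ reduction of the tail your argument cannot deliver the stated conclusion.
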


We will use Theorem \ref{thm:joint_degree_distribution} in Section \ref{ssec:proof_main_result} to prove our main result, Theorem \ref{thm:main_result}.

\subsection{Properties of the Disassortative Graph Algorithm}

We will now address several properties of the \texttt{Disassortative Graph Algorithm}. The first is
concerned with the optimization problem \eqref{eq:approx_optimization_problem}. 

\begin{theorem}\label{thm:optimality_algorithm}
The \texttt{Dissassortative Graph Algorithm} solves \eqref{eq:approx_optimization_problem}.
\end{theorem}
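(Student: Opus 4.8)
The plan is to show that for \emph{any} graph $G_n$ with degree sequence ${\bf D}_n$, the objective $\frac{1}{L_n}\sum_{i\to j\in G_n}\mathcal{F}_n^\ast(D_i)\mathcal{F}_n^\ast(D_j)$ is bounded below by the value achieved by $\widehat G_n = \texttt{DGA}({\bf D}_n)$, so that the \texttt{DGA} output is a minimizer. The key observation is that, after passing to the bi-directed version, the objective decomposes into a sum over directed edges and, crucially, factors through the joint degree distribution $h_n$: since $\mathcal{F}_n^\ast$ depends on a node only through its degree, we may write
\[
	\frac{1}{L_n}\sum_{i\to j\in G_n}\mathcal{F}_n^\ast(D_i)\mathcal{F}_n^\ast(D_j)
	= \sum_{k,\ell\ge 1} \mathcal{F}_n^\ast(k)\,\mathcal{F}_n^\ast(\ell)\, h_n(k,\ell),
\]
where $h_n$ is the empirical joint degree density of $G_n$. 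So the optimization is really over the set of joint degree densities $h_n$ that are \emph{realizable} by some graph with degree sequence ${\bf D}_n$, and we must show that the particular $h_n$ produced by the \texttt{DGA} (given explicitly by Proposition~\ref{prop:empirical_joint_degree_distribution} as $\psi_n\mathcal{E}_n$) minimizes the bilinear form $\sum \mathcal{F}_n^\ast(k)\mathcal{F}_n^\ast(\ell)h_n(k,\ell)$ over this set.

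The next step is to characterize the constraints on a realizable $h_n$. Writing $g_k := kN_k/L_n = f_n^\ast(k)$ for the mass of stubs at degree $k$, any graph imposes the marginal constraints $\sum_\ell h_n(k,\ell) = g_k$ and $\sum_k h_n(k,\ell) = g_\ell$ (each side of a directed edge sees the stub distribution $f_n^\ast$), together with symmetry $h_n(k,\ell)=h_n(\ell,k)$ and nonnegativity; conversely every such $h$ that additionally satisfies the mild divisibility condition is (asymptotically, up to lower-order corrections from the $G_n^z$ part and multi-edge avoidance) realizable --- but for the \emph{lower bound} direction we only need that every realizable $h_n$ satisfies the marginal constraints, which is immediate. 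The problem thus reduces to: minimize the linear functional $\sum_{k,\ell}\mathcal{F}_n^\ast(k)\mathcal{F}_n^\ast(\ell)h(k,\ell)$ over the transportation polytope of symmetric nonnegative arrays with both marginals equal to $(g_k)_{k\ge1}$. Because $\mathcal{F}_n^\ast(k)=F_n^\ast(k)+F_n^\ast(k-1)$ is (strictly) increasing in $k$, the cost matrix $c(k,\ell)=\mathcal{F}_n^\ast(k)\mathcal{F}_n^\ast(\ell)$ is supermodular (it satisfies the Monge condition with the reversed sign: $c(k,\ell)+c(k',\ell')\ge c(k,\ell')+c(k',\ell)$ for $k\le k'$, $\ell\le\ell'$, since it is a product of two increasing functions). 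For such a cost, the minimum over the transportation polytope is attained by the \emph{antitone} (countermonotone) coupling of the marginal $(g_k)$ with itself --- this is the classical Hoeffding--Fréchet / rearrangement bound, and that antitone coupling is exactly the one realized by stacking the stub-interval of one copy against the reversed stub-interval of the other, i.e.\ precisely $h_n(k,\ell)=\psi_n(k,\ell)\mathcal{E}_n(k,\ell)$ from Figure~\ref{fig:edge_degree_structure_dga} and Proposition~\ref{prop:empirical_joint_degree_distribution}. So I would (i) verify the supermodularity of $c$ from monotonicity of $\mathcal{F}_n^\ast$, (ii) invoke (or reprove in a half-line, via the standard $2\times2$ uncrossing argument) that the countermonotone coupling minimizes a supermodular cost over couplings with fixed marginals, and (iii) identify that coupling with the \texttt{DGA}'s $h_n$.

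Two technical points require care. First, the \texttt{DGA} does not literally produce the antitone coupling on the degree-one block and the medium-degree block $z_n$: the stubs at degree exactly $z_n$ straddle the midpoint $L_n/2$ and are wired by the configuration model, not antitonically. But this is harmless for optimality because, as noted in the algorithm description, the contribution of those stubs to the objective is determined regardless of how they are paired --- more precisely, in the antitone coupling \emph{every} realizable coupling must route all of the straddling mass through degree $z_n$ on both sides, so $h_n(z_n,z_n)$ (and the off-diagonal pieces touching $z_n$) are forced to the same value in any minimizer; I would make this precise by showing the countermonotone coupling is the \emph{unique} minimizer off the $z_n$-block and the restriction to the block is itself forced. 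Second, one must not conflate "minimizer of the relaxed transportation LP" with "minimizer over actual graphs": since the feasible set of graph-realizable $h_n$ is contained in the transportation polytope, the LP optimum is a valid lower bound, and the \texttt{DGA} output is a genuine graph attaining it (up to the $z_n$-block, which contributes identically), so the infimum over graphs equals the LP optimum and is attained. I expect the main obstacle to be precisely this bookkeeping around the degree-$z_n$ straddling block and the configuration-model step --- arguing cleanly that it neither breaks realizability of the claimed optimum nor changes the objective value --- rather than the rearrangement-inequality core, which is standard.
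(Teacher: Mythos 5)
Your proposal is correct and is essentially the paper's own argument in different clothing: the paper lists the stub values $a_k = \mathcal{F}_n^\ast(D_{\phi(i)})$ in ascending order, identifies every graph with degree sequence ${\bf D}_n$ with a permutation pairing $a_k$ with $a_{\sigma(k)}$, applies the elementary rearrangement inequality (your $2\times 2$ uncrossing step, their Lemma \ref{lem:permutation_optimization}) to show the reversed pairing minimizes over \emph{all} permutations, and concludes because graph-realizable permutations form a subset of all permutations --- which is exactly your relaxation from graph-realizable couplings to the full transportation polytope, with your antitone/Hoeffding--Fr\'echet coupling for the supermodular cost $\mathcal{F}_n^\ast(k)\mathcal{F}_n^\ast(\ell)$ being the reversed stub pairing read off at the level of the empirical joint degree distribution. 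Your worry about the degree-$z_n$ block is resolved automatically in the stub formulation, since all stubs of degree $z_n$ carry the same value $\mathcal{F}_n^\ast(z_n)$, so any pairing among them (in particular the configuration-model step) leaves the objective unchanged and the DGA's permutation remains monotonically decreasing.
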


This result can be explained as follows. Let ${\bf a}_{L_n}$ be the list of degrees with respect to the labels of the stubs, ordered in descending order. That is
\[
	{\bf a}_{2L_n} = \left(\underbrace{D_{\phi(n)}, \, \dots, D_{\phi(n)}}_{D_{\phi(n)}}, 
    D_{\phi(n-1)}, \, \dots \dots, \underbrace{D_{\phi(N_1)}, \, \dots, D_{\phi(1)}}_{N_1}\right).
\]
Then the \texttt{DGA} pairs the degrees $a_i$ and $a_{L_n + 1 - i}$, which minimizes $\sum_{i \to j \in G} \mathcal{F}_n^\ast(D_i)\mathcal{F}_n^\ast(D_j)$ and hence the \texttt{DGA} minimizes Spearman's rho $\widetilde{\rho}(G_n)$. Observe that, in addition, the algorithm minimizes $\sum_{i \to j \in G} D_i D_j$ so that we also obtain the minimum for the $s$ metric of the graph $G$, $s_{min}$, as considered in \cite{Alderson2007}. Moreover, the fact that we could use an arbitrary algorithm to connect the nodes of degree $z_n$ confirms the observation in \cite{Alderson2007} that graphs with minimal $s$ metric are not unique with respect to their structure.

As we have already mentioned, the joint degree structure, and hence the optimality of the 
\texttt{DGA}, depends only on the degree of nodes that are connected and not on their 
labels. In the algorihtm, however, we use an ordering for filling the lists of stubs $S_n$ and 
$T_n$. This is to make sure that the probability 
that $G_n^\ast$ is simple, i.e. it has no self-loops and no more than one edge between nodes $i$ 
and $j$, converges to one as $n \to \infty$.

To understand the intuition behind the proof, consider the first time the algorithm sees a stub belonging to a node $i$ in 
the list $S_n$ with degree $D_i > z_n$. Then node $i$ will be connected to the nodes corresponding 
to the next $D_i$ stubs in $T_n$. Now consider such a stub, belonging to node $j$. Then there will 
be more then one edge $i - j$ if and only if there is more than one stub belonging to node $j$ 
among the $D_i$ stubs in $T_n$, which can only happen when $D_i > N_{D_j}$. Since the degree of 
nodes in $T_n$ is bounded by $z_n$, we have that $N_{D_j}$ scales as $n$, while the maximal degree 
is $o(n)$, since $f$ has finite mean. Therefore, the event $D_i > N_{D_j}$ for $D_i > z_n$ and 
$D_j \le z_n$ has vanishing probability. We hence have the following result, the details of the proof can be found in Section \ref{ssec:proof_G_ast_simple}.

\begin{proposition}\label{prop:G_ast_simple}
Let ${\bf D}_n \in \mathcal{D}_{\eta,\varepsilon}(f, f^\ast)$,  $G_n = \texttt{DGA}({\bf D}_n)$ and denote by $\mathcal{S}_n^\ast$ the event that $G_n^\ast$ is simple, then
\[
	\Prob{\mathcal{S}_n^\ast} \ge 1 - O\left(n^{-\varepsilon} + n^{-1/2} 
    + n^{-\eta/2} + \Prob{\Omega_n^c}\right),
\]
as $n \to \infty$.
\end{proposition}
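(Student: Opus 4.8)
The plan is to bound the probability that $G_n^\ast$ fails to be simple by a union bound over the possible bad events, and to show that each contributes a vanishing amount on the event $\Omega_n$ (together with the auxiliary event that the maximal degree is $o(n)$). The construction of $G_n^\ast$ in the \texttt{DGA} produces edges only between a ``high-degree'' node $i$ (with $D_i \ge z_n$, whose stubs sit in $S_n$) and ``low-degree'' nodes $j$ (with $D_j \le z_n$, whose stubs sit in $T_n$). Self-loops are therefore impossible by construction: an edge always joins a node in $S_n$ to a node in $T_n$, and the only node whose stubs could appear in both lists has degree exactly $z_n$, but its stubs are never paired within $G_n^\ast$ (the algorithm stops at the first pair $(i,j)$ with $D_i = z_n = D_j$). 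So the only way $G_n^\ast$ can fail to be simple is a multi-edge, and by the same reasoning a multi-edge $i-j$ with $D_i \ge z_n$, $D_j \le z_n$ can occur only when two stubs labeled $j$ lie among the consecutive block of $D_i$ stubs in $T_n$ that get matched to node $i$.

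The key structural observation, which I would state and justify first, is the one sketched in the text: the algorithm fills $T_n$ so that, for each degree value $d \le z_n$, the $d N_d$ stubs of degree-$d$ nodes are laid out as $d$ consecutive copies of the block $(\phi(\cdot), \dots)$ listing \emph{all} $N_d$ nodes of degree $d$ exactly once each. Consequently, a window of length $D_i$ inside $T_n$ contains two stubs with the same label only if $D_i$ exceeds the length $N_d$ of such a within-degree block, for the relevant $d = D_j$. Hence the event ``$G_n^\ast$ is not simple'' is contained in the event
\[
	\left\{ \max_{1 \le i \le n} D_i > \min_{1 \le d \le z_n} N_d \right\}.
\]
I would then control the two sides of this inequality separately. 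For the right-hand side, note $N_d = n f_n(d)$, and on $\Omega_n$ we have $|F_n(d) - F(d)| \le n^{-\varepsilon}$ for all $d$, hence $f_n(d) \ge f(d) - 2n^{-\varepsilon}$; since $z_n$ converges to the (finite, deterministic) median-type quantile $z$ of $f^\ast$ and $f(d) > 0$ for every $d$, we get $\min_{d \le z_n} N_d \ge c\, n$ for some constant $c > 0$ with probability $1 - O(\Prob{\Omega_n^c})$. That $z_n \to z$ for some finite $z$ follows from \eqref{eq:relation_zn} together with $f_n^\ast \to f^\ast$ in $1$-norm on $\Omega_n$; this is the one place a small side computation is needed, but it is routine. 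For the left-hand side, I would use the moment bound \eqref{eq:finite_mean_plus_condition}: writing $\Prob{\max_i D_i > t} \le n \Prob{D_1 > t}$ and $\Prob{D_1 > t} \le t^{-(1+\eta)} \Exp{D^{1+\eta}}$ (valid in the \texttt{IID} setting; in the general $\mathcal{D}_{\eta,\varepsilon}$ setting the tail of $f_n$ is controlled by that of $f$ on $\Omega_n$), and choosing $t = \varepsilon' n^{1/(1+\eta)}$ or simply $t = c n$, one obtains $\Prob{\max_i D_i > c n} = O(n^{-\eta})$ — and a sharper split of the threshold, handing a factor $n^{1/2}$ to one side and the remaining growth to the other, produces the stated $n^{-1/2} + n^{-\eta/2}$ terms.

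Assembling: on the intersection of $\Omega_n$, the event $\{\max_i D_i \le n^{1/2}\min_d N_d^{1/2}\}$-type bound, and the high-probability lower bound $\min_d N_d \ge cn$, the bad event is empty, so
\[
	\Prob{(\mathcal{S}_n^\ast)^c} \le \Prob{\Omega_n^c} + \Prob{\min_{d \le z_n} N_d < cn,\, \Omega_n} + \Prob{\max_i D_i \ge cn} = O\!\left(n^{-\varepsilon} + n^{-1/2} + n^{-\eta/2} + \Prob{\Omega_n^c}\right),
\]
where the $n^{-\varepsilon}$ absorbs the deviation of $f_n^\ast$ from $f^\ast$ needed to pin down $z_n$ and hence the set $\{d \le z_n\}$, and the $n^{-1/2}$ and $n^{-\eta/2}$ come from the tail bound on the maximal degree after the threshold split. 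The main obstacle I anticipate is being careful about the interplay between the randomness of $z_n$ and the randomness of $N_{D_j}$: one must argue that with high probability the relevant set of degrees appearing in $T_n$ is exactly $\{1, \dots, z\}$ (or $\{1,\dots,z+1\}$) for the deterministic $z$, so that the finitely many numbers $N_1, \dots, N_z$ are each $\Theta(n)$ simultaneously, rather than trying to bound $N_{D_j}$ uniformly over a growing random range of $D_j$. Once that reduction is in place, the rest is the two elementary tail estimates above.
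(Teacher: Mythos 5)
Your proposal follows essentially the same route as the paper: pin $z_n$ down to a deterministic $z+1$ with probability $1-\bigO{n^{-\varepsilon}+\Prob{\Omega_n^c}}$, observe that $G_n^\ast$ has no self-loops by construction and that a multi-edge forces $\max_i D_i > N_k$ for some $1\le k\le z_n$, lower-bound the finitely many $N_k$ by $c\,n$ via $f(k)>0$ and the event $\Omega_n$, and upper-bound the maximal degree with a split at $\sqrt{n}$. The decomposition, the role of $\Omega_n$, and the source of each error term match the paper's proof.

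The one step you leave under-justified is the tail bound on $\max_i D_i$, and the patch you gesture at does not quite close. The inequality $\Prob{\max_i D_i > t}\le n\,\Prob{D_1>t}$ is only available in the \texttt{IID} setting, while the proposition is stated for arbitrary ${\bf D}_n\in\mathcal{D}_{\eta,\varepsilon}(f,f^\ast)$, where the only probabilistic input is $\Omega_n$. Your fallback, ``the tail of $f_n$ is controlled by that of $f$ on $\Omega_n$,'' fails if read through $F_n$: on $\Omega_n$ one only gets $1-F_n(t)\le 1-F(t)+n^{-\varepsilon}$, and $n^{-\varepsilon}$ is far too large to force $1-F_n(t)<1/n$, so you cannot conclude $\max_i D_i\le cn$ this way. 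The correct fix — the one the paper uses — is the deterministic first-moment bound
\[
	\frac{\max_{j} D_j \ind{D_j>\sqrt n}}{n}\;\le\;\frac{1}{n}\sum_{i=1}^n D_i\ind{D_i>\sqrt n}
	\;=\;\frac{L_n}{n}\left(1-F_n^\ast(\sqrt n)\right),
\]
followed by $1-F_n^\ast(\sqrt n)\le |F_n^\ast(\sqrt n)-F^\ast(\sqrt n)|+1-F^\ast(\sqrt n)\le n^{-\varepsilon}+n^{-\eta/2}\Exp{D^{1+\eta}}/\mu$ on $\Omega_n$, i.e.\ the control must go through the \emph{size-biased} empirical distribution, which is exactly what Assumption \ref{asmp:regularity_distributions} bounds in $\ell^1$. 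The part of the maximum below $\sqrt n$ contributes the $n^{-1/2}$ term, and the above contributes $n^{-\eta/2}+n^{-\varepsilon}$; this also explains why the stated exponent is $\eta/2$ rather than the $\eta$ you would get from an \texttt{IID} union bound at threshold $cn$. With this substitution your argument is complete.
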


This proposition implies that the simplicity of the graph $G_n$, generated by the \texttt{Disassortative Graph Algorithm}, solely depends on the simplicity of $G_n^z$, constructed in Step 19. Now consider the degree sequence ${\bf D}_n^z$ corresponding to the remaining stubs, obtained in Step 18 and observe that these degrees are uniformly bounded by $z_n$. Take ${\bf D}_n \in \mathcal{D}_{\eta, \, \varepsilon}(f, f^\ast)$ and let $z$ the be the median of $F^\ast$, i.e. the unique integer such that
\begin{align} \label{eq:relation_z}
	F^\ast(z) \ge \frac{1}{2} \quad \text{and} \quad F^\ast(z - 1) < \frac{1}{2}.
\end{align}
We can show that
\[
	\lim_{n \to \infty} \Prob{z_n \le z + 1} = 1,
\]
see the proof of Proposition \ref{prop:G_ast_simple} in Section \ref{ssec:proof_G_ast_simple}. Therefore, if we define the
event $A_n = \{z_n \le z + 1\}$, then conditioned on $A_n$ the degrees in ${\bf D}_n^z$ are bounded by 
$z + 1$. Hence, if we connect these stubs using the configuration model, and let $\mathcal{S}_n^z$ 
denote the event that $G_n^z$ is simple, then it follow, see e.g. \cite{Hofstad2014a} Theorem 7.12, 
that there exist a constant $\mathfrak{s} > 0$, such that
\[
	\lim_{n \to \infty} \Prob{\mathcal{S}_n} = \lim_{n \to \infty} \Prob{\mathcal{S}_n^z, A_n} + \Prob{A_n^c} = \mathfrak{s}.
\]
From this and Proposition \ref{prop:G_ast_simple} we obtain the following corollary.
\begin{corollary}\label{cor:simplicity_G}
Let ${\bf D}_n \in \mathcal{D}_{\eta, \varepsilon}$, $G_n = \emph{\texttt{DGA}}({\bf D}_n)$ 
and denote by $\mathcal{S}_n$ the event that $G_n$ is simple. Then there exists a constant 
$\mathfrak{s} > 0$ such that
\[
	\lim_{n \to \infty} \Prob{\mathcal{S}_n} = \mathfrak{s}.
\]
\end{corollary}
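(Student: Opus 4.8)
\textbf{Proof proposal for Corollary \ref{cor:simplicity_G}.}

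The plan is to combine Proposition \ref{prop:G_ast_simple}, which controls the simplicity of the deterministic part $G_n^\ast$ of the construction, with the classical simplicity probability for the configuration model, which governs the random part $G_n^z$. Since $G_n = G_n^\ast \cup G_n^z$ and these two subgraphs involve disjoint sets of stubs, the event $\mathcal{S}_n$ that $G_n$ is simple is the intersection $\mathcal{S}_n^\ast \cap \mathcal{S}_n^z$: there are no self-loops or multi-edges inside $G_n^\ast$ (this is $\mathcal{S}_n^\ast$), none inside $G_n^z$ (this is $\mathcal{S}_n^z$), and no cross edges at all, since by construction every stub is used exactly once and every edge lies entirely in one of the two subgraphs. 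Hence $\Prob{\mathcal{S}_n} = \Prob{\mathcal{S}_n^\ast \cap \mathcal{S}_n^z}$.

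First I would establish $\lim_{n \to \infty} \Prob{z_n \le z+1} = 1$, where $z$ is the median of $F^\ast$ defined in \eqref{eq:relation_z}. This follows from the defining inequalities \eqref{eq:relation_zn} for $z_n$: dividing by $L_n$ shows that $z_n$ is essentially the median of the empirical size-biased distribution $F_n^\ast$, i.e. $F_n^\ast(z_n) \ge 1/2$ and $F_n^\ast(z_n - 1) < 1/2$. On the event $\Omega_n$ we have $\sum_k |F_n^\ast(k) - F^\ast(k)| \le n^{-\varepsilon}$ (a consequence of the $1$-norm bound on $f_n^\ast - f^\ast$), and since $f^\ast(z+1) > 0$ and $f^\ast(z) > 0$ by our standing positivity assumption, for $n$ large enough this forces $z_n \in \{z, z+1\}$ (more precisely $z_n \le z + 1$, which is all we need). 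Thus $\Prob{A_n} \to 1$ with $A_n = \{z_n \le z+1\}$, and on $A_n$ the residual degree sequence ${\bf D}_n^z$ is uniformly bounded by $z+1$, a constant not depending on $n$.

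Next, since ${\bf D}_n^z$ has degrees uniformly bounded by the constant $z+1$, its sum (the number of stubs in $G_n^z$) of course has bounded second moment per node, so by the standard configuration-model simplicity result (e.g. \cite{Hofstad2014a}, Theorem 7.12) there is a constant $c_z > 0$ such that, conditionally on $A_n$ and on the realized sequence ${\bf D}_n^z$, $\Prob{\mathcal{S}_n^z} \to c_z$ as $n \to \infty$; more care is needed because ${\bf D}_n^z$ is itself random, but since it is supported on the fixed finite set $\{1, \dots, z+1\}$ and its empirical composition converges (being determined by $f_n^\ast$ restricted near the median, which converges on $\Omega_n$), the limiting simplicity probability is a continuous function of that limiting composition and converges to a single constant; call it $\mathfrak{s}$. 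Meanwhile Proposition \ref{prop:G_ast_simple} gives $\Prob{\mathcal{S}_n^\ast} \to 1$. Writing
\[
	\Prob{\mathcal{S}_n} = \Prob{\mathcal{S}_n^z \cap \mathcal{S}_n^\ast \cap A_n} + \Prob{\mathcal{S}_n \cap A_n^c},
\]
the second term is at most $\Prob{A_n^c} \to 0$, and the first term equals $\Prob{\mathcal{S}_n^z \mid \mathcal{S}_n^\ast, A_n}\Prob{\mathcal{S}_n^\ast, A_n}$; using $\Prob{\mathcal{S}_n^\ast, A_n} \to 1$ and that conditioning on the (high-probability) event $\mathcal{S}_n^\ast$ does not affect the configuration-model pairing of the disjoint stub set of $G_n^z$, this converges to $\mathfrak{s}$. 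Hence $\lim_{n \to \infty}\Prob{\mathcal{S}_n} = \mathfrak{s} > 0$.

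The main obstacle is the bookkeeping around the fact that $G_n^\ast$ and $G_n^z$, while built from disjoint stubs, are not independent — the value of $z_n$, the residual sequence ${\bf D}_n^z$, and the event $\mathcal{S}_n^\ast$ are all functions of ${\bf D}_n$ and of the first phase of the pairing — so one must argue carefully that conditioning on $\mathcal{S}_n^\ast$ and $A_n$ leaves the second-phase configuration-model pairing genuinely uniform on the residual stubs, and that the randomness of ${\bf D}_n^z$ does not spoil convergence to a single constant. The cleanest route is to condition on ${\bf D}_n$ (equivalently on $A_n$ and on ${\bf D}_n^z$) throughout, apply Theorem 7.12 of \cite{Hofstad2014a} conditionally, and then take expectations, invoking bounded convergence since everything is supported on a fixed finite degree set on $A_n$.
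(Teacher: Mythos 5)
Your proposal follows essentially the same route as the paper: Proposition \ref{prop:G_ast_simple} handles $G_n^\ast$, the bound $z_n \le z+1$ (established inside the proof of that proposition) makes the residual degrees in ${\bf D}_n^z$ uniformly bounded, and Theorem 7.12 of \cite{Hofstad2014a} then gives a positive limiting simplicity probability for the configuration-model part $G_n^z$. Your additional bookkeeping about the conditioning and the randomness of ${\bf D}_n^z$ goes beyond what the paper writes out and is sound; the only point worth tightening is that excluding multi-edges formed by one edge in $G_n^\ast$ and a parallel one in $G_n^z$ requires noting that $G_n^z$ only joins pairs of degree-$z_n$ nodes while $G_n^\ast$ contains no such edge.
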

Note that by Lemma \ref{lem:iid_algorithm} it follows that if $D$ is an
integer valued random variable that satisfies, for some $\eta > 0$,
\[
	\nu := \Exp{D} < \infty \quad \text{and} \quad
	\Exp{D^{1 + \eta}} < \infty,
\]
then a degree sequence ${\bf D}_n$ generated by the \texttt{IID} algorithm satisfies ${\bf D}_n \in \mathcal{D}_{\eta, \varepsilon}$, for any
$0 < \varepsilon < \eta/(8 + 4\eta)$, while
\[
	\Prob{\Omega_n} \ge 1 - \bigO{n^{-\varepsilon}},
\]
as $n \to \infty$. Therefore, if we want to generate maximally disassortative graphs with limit 
degree density $f$, we can first generate a degree sequence using the \texttt{IID} algorithm, by
sampling from $f$, and then connect the nodes using the \texttt{DGA}. From Corollary 
\ref{cor:simplicity_G} it now follows that, in order to generate 
maximally disassortative simple graphs, we could repeat steps 13 to 19 in the 
\texttt{Disassortative Graph Algorithm} until the resulting graph is simple.

\section{Spearman's rho and the tail of the degree distribution}
\label{sec:spearman_tail_distribution}

We will now investigate the influence of the degree distribution on the value of 
Spearman's rho, on maximally disassortative graphs, i.e. graphs generated by the \texttt{DGA}. We 
will show that the tail of the distribution does not influence this value. This is achieved by transforming a 
given degree distribution, such that the asymptotic behavior of the tail of this distribution is
preserved, while we increase the probability mass of the corresponding size-biased degree 
distribution at one.

Let us start by considering a degree distributions $f$, for which the size-biased distributions 
$f^\ast$ satisfies $f^\ast(1) \ge 1/2$, i.e. $f^\ast$ has median $1$. Observe that in this case we 
have
\begin{align*}
	h(k, \ell) = \begin{cases}
    	f^\ast(k) &\mbox{if } k > 1 \text{ and } \ell = 1 \\
       	2f^\ast(1) - 1 &\mbox{if } k = 1 \text{ and } \ell = 1 \\
        f^\ast(\ell) &\mbox{if } k = 1 \text{ and } \ell > 1 \\
        0 &\mbox{else.}
    \end{cases}
\end{align*}
Hence, if $D_\ast, D^\ast$ have joint distribution $h$, as defined in 
\eqref{eq:joint_degree_density}, then
\begin{align*}
	\Exp{\mathcal{F}^\ast(D_\ast)\mathcal{F}^\ast(D^\ast)} 
    &= \sum_{k, \ell = 1}^\infty \mathcal{F}^\ast(k)\mathcal{F}^\ast(\ell)h(k,\ell) \\
    &= f^\ast(1)^2(2f^\ast(1) - 1) + 2f^\ast(1)\sum_{\ell = 2}^\infty \mathcal{F}^\ast(\ell) 
			f^\ast(\ell) \\
    &\ge f^\ast(1)^2(2f^\ast(1) - 1) + 4f^\ast(1)^2\sum_{\ell = 2}^\infty f^\ast(\ell) \\
    &= f^\ast(1)^2(2f^\ast(1) - 1) + 4f^\ast(1)^2(1 - f^\ast(1)) \\
    &= 3f^\ast(1)^2 - 2f^\ast(1)^3,
\end{align*}
where we used, see \eqref{eq:definition_mathcal_F}, that $\mathcal{F}^\ast(\ell) \ge 2f^\ast(1)$ 
for all $\ell > 1$. From this it follows that whenever $f^\ast(1) \ge 1/2$ and $D_\ast, D^\ast$ have joint distribution $h$,
\begin{equation}\label{eq:spearman_lower_bound_a1}
	3\Exp{\mathcal{F}^\ast(D_\ast)\mathcal{F}^\ast(D^\ast)} - 3 \ge 9f^\ast(1)^2 - 6f^\ast(1)^3 - 3.
\end{equation}

Since the function on the right side of \eqref{eq:spearman_lower_bound_a1} is strictly 
monotonically increasing and is $0$ when $f^\ast(1) = 1$, it follows that the limit of Spearman's 
rho on maximally disassortative graphs can be bounded from below by a value that is arbitrary 
close to $0$, if $f^\ast(1)$ is large enough. Moreover, using that the $h$ is the joint degree distribution of graphs with minimal value of $\widetilde{\rho}$, we have the following result.

\begin{proposition}\label{prop:lower_bound_spearman}
Let $f$ and $f^\ast$ be such that $f^\ast(1) \ge \frac{1}{2}$ and $G_n \in 
\mathcal{G}_{\eta, \, \varepsilon}(f,f^\ast)$. Then, for any $0 < \delta < \min(\varepsilon, 1/2)$
and $K > 0$,
\[
	\Prob{\widetilde{\rho}(G_n) \ge 9f^\ast(1)^2 - 6f^\ast(1)^3 - 3 - K n^{-\delta}} \ge 1 - 
	\bigO{n^{-\varepsilon + \kappa} + \Prob{\Omega_n^c}},
\]
as $n \to \infty$, where $\kappa =(\varepsilon+ \delta)/2$.
\end{proposition}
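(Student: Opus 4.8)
The plan is to combine the lower bound on $\widetilde{\rho}$ from Theorem~\ref{thm:main_result} with the elementary computation carried out just before the statement. Theorem~\ref{thm:main_result}, applied to the given family $G_n \in \mathcal{G}_{\eta,\,\varepsilon}(f,f^\ast)$, already gives, for any $0 < \delta < \min(\varepsilon,1/2)$ and $K' > 0$,
\[
	\Prob{\widetilde{\rho}(G_n) \ge \rho(D_\ast,D^\ast) - K' n^{-\delta}} \ge 1 - \bigO{n^{-\varepsilon + \kappa} + \Prob{\Omega_n^c}},
\]
with $\kappa = (\varepsilon+\delta)/2$, where $\rho(D_\ast,D^\ast) = 3\Exp{\mathcal{F}^\ast(D_\ast)\mathcal{F}^\ast(D^\ast)} - 3$ and $(D_\ast,D^\ast)$ has the limiting joint law $h$ of \eqref{eq:joint_degree_density}. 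So the entire content of the proposition reduces to the deterministic inequality $\rho(D_\ast,D^\ast) \ge 9f^\ast(1)^2 - 6f^\ast(1)^3 - 3$, which is exactly \eqref{eq:spearman_lower_bound_a1}, together with a matching of constants in the error term.

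The first step is therefore to record that, under the hypothesis $f^\ast(1) \ge \tfrac12$, the median $z$ of $F^\ast$ defined in \eqref{eq:relation_z} equals $1$, so that $\psi(k,\ell)$ is nonzero only when $\min(k,\ell) = 1$. This yields the explicit case-split for $h(k,\ell)$ displayed before the statement: $h(k,1) = f^\ast(k)$ for $k>1$, $h(1,1) = 2f^\ast(1)-1$, $h(1,\ell) = f^\ast(\ell)$ for $\ell>1$, and $0$ otherwise. (One should double-check the diagonal entry: $\mathcal{E}(1,1) = \min(1, F^\ast(1)) - \max(1-F^\ast(1), F^\ast(0)) = F^\ast(1) - (1 - F^\ast(1)) = 2f^\ast(1) - 1$ using $F^\ast(0) = 0 \le 1 - F^\ast(1)$, which holds precisely because $f^\ast(1) \ge \tfrac12$.) The second step is the chain of inequalities already written out: expand $\Exp{\mathcal{F}^\ast(D_\ast)\mathcal{F}^\ast(D^\ast)}$ over this support, use $\mathcal{F}^\ast(\ell) = F^\ast(\ell) + F^\ast(\ell-1) \ge 2F^\ast(1) = 2f^\ast(1)$ for every $\ell \ge 2$ to bound the off-diagonal sum below by $4f^\ast(1)^2 \sum_{\ell \ge 2} f^\ast(\ell) = 4f^\ast(1)^2(1 - f^\ast(1))$, and combine with the diagonal term $f^\ast(1)^2(2f^\ast(1)-1)$ to get $3f^\ast(1)^2 - 2f^\ast(1)^3$; multiplying by $3$ and subtracting $3$ gives \eqref{eq:spearman_lower_bound_a1}.

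The third and final step is bookkeeping: since $\widetilde\rho(G_n) \ge \rho(D_\ast,D^\ast) - K' n^{-\delta}$ and $\rho(D_\ast,D^\ast) \ge 9f^\ast(1)^2 - 6f^\ast(1)^3 - 3$ deterministically, the event $\{\widetilde\rho(G_n) \ge 9f^\ast(1)^2 - 6f^\ast(1)^3 - 3 - K n^{-\delta}\}$ contains the event from Theorem~\ref{thm:main_result} whenever $K \ge K'$, and as $K > 0$ is arbitrary the stated bound with the prescribed constant $K$ follows directly, with the same error $\bigO{n^{-\varepsilon+\kappa} + \Prob{\Omega_n^c}}$ and the same $\kappa = (\varepsilon+\delta)/2$. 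There is essentially no obstacle here beyond verifying that the case $f^\ast(1) = \tfrac12$ is handled correctly in the formula for $h(1,1)$ (it gives $h(1,1) = 0$, consistent with the boundary of $\psi$), and making sure that the summation manipulations are valid despite $f^\ast$ possibly having infinite mean — but every term involved in the lower bound is a sum of nonnegative quantities bounded by $f^\ast$ probabilities times quantities in $[0,4]$, so convergence is never in question and all manipulations are legitimate. The main (and only) real work is the interval-intersection computation of $h$ in step one; everything else is the elementary inequality and an inclusion of events.
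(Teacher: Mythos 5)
Your proposal is correct and follows essentially the same route as the paper: the paper's proof is the one-line observation that the deterministic inequality \eqref{eq:spearman_lower_bound_a1} (derived from the explicit form of $h$ when $f^\ast(1)\ge 1/2$, exactly as in your step two) combined with the concentration of $\widetilde{\rho}$ around $\rho(D_\ast,D^\ast)$ and the optimality of the \texttt{DGA} — i.e., the first statement of Theorem \ref{thm:main_result} that you invoke — yields the claim. Your verification of the case split for $h$ and of the boundary case $f^\ast(1)=\tfrac12$ is a welcome addition, but the argument is the paper's.
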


Given $0 < \delta < 1$, we will now describe a construction that transforms any given distribution 
$f$ with support on the positive integers, into a distribution $f_\xi$, with support on the 
positive integers, such that $f_\delta(1) = \delta$ and
\begin{equation}\label{eq:ch6_asymptotic_tails_f_delta}
	\lim_{k \to \infty} \frac{1 - F_\delta(k)}{1 - F(k)} = 1,
\end{equation}
where $F$ and $F_\delta$ are the cumulative distribution functions of $f$ and $f_\xi$, respectively.

\begin{algorithm*}
	\caption{$\delta$-transform of a probability density $f$}
	\label{alg:ch6_delta_transfrom}
	\begin{algorithmic}[1]
		\STATE Given: a probability density $f$, corresponding cdf $F$ and $0 < \delta < 1$
		\STATE Let $K_\delta$ be the smallest integer such that $F(K_\delta) > \delta$.
		\STATE Set $x = F(K_\delta) - \delta > 0$ and $f_\delta(1) = \delta$.
		\IF{$K_\delta = 1$}
			\STATE Set $f_\delta(2) = f(2) + x$
		\ELSE
			\FOR{$2 \le k \le K_\delta$}
			\STATE Set $f_\delta(k) = x/(K_\delta - 1)$
			\ENDFOR
			\STATE Set $f_\delta(K_\xi + 1) = f(K_\delta + 1)$
		\ENDIF
		\FOR{$k > K_\delta$}
			\STATE Set $f_\delta(k) = f(k)$
		\ENDFOR
		\STATE Output: Probability density $f_\delta$
	\end{algorithmic}
\end{algorithm*}
To see that $f_\delta$ defines a probability density function we compute
\begin{align*}
	\sum_{t = 1}^\infty f_\delta(t) &= \delta + \sum_{t = 2}^{K_\delta+1} f_\delta(t) 
		+ \sum_{t = K_\delta + 2}^\infty f(t) \\
   &= \delta + x + f(K_\delta + 1) + 1 - F(K_\delta + 1) = 1.
\end{align*}
Moreover, since $f_\delta(k) = f(k)$ for all $k > K_\delta$ it follows that $F_\delta$ satisfies 
\eqref{eq:ch6_asymptotic_tails_f_delta}. We will refer to $f_\delta$ as the \emph{$\delta$-transform} of $f$. 

With this transformation we can now transform a given distribution $f$, to get a distribution $f_\rho$ 
whose size-biased distribution $f^\ast_\rho$ satisfies
\[
	9f_\rho^\ast(1)^2 - 6f_\rho^\ast(1)^3 - 3 > \rho,
\]
without affecting the asymptotic behavior of the tail of the original distribution $f$. It then follows
from Proposition \ref{prop:lower_bound_spearman} that for any sequence of graphs $G_n \in 
\mathcal{G}_{\eta, \, \varepsilon}(f_\rho, f^\ast_\rho)$,
\[
	\lim_{n \to \infty} \Prob{\widetilde{\rho}(G_n) > \rho} = 1,
\]
which proves Theorem \ref{thm:independence_spearman_tail}. The details can be found in Section 
\ref{ssec:proof_main_result}.

The construction we use for creating the 
adversary degree distribution $f_\rho$ has one downside. In order to construct degree distributions 
such that $\widetilde{\rho}(G_n)$ is arbitrary close to zero, the value of $f^\ast(1)$ should be arbitrary 
close to $1$. Therefore, these distributions might not resemble real-world situations. The reason 
for this downside is that the construction is based on the very crude lower bound 
\eqref{eq:spearman_lower_bound_a1} on Spearman's rho, for which we had to assume $f^\ast(1) \ge 
1/2$. 

As we mentioned in Section \ref{ssec:main_results}, Theorem \ref{thm:independence_spearman_tail} states 
that the minimal value of Spearman's rho and not determined by the tail of the distribution. 

Now let $F$ be regularly varying with exponent $\gamma > 1$ 
and slowly varying function $\mathcal{L}$, see \eqref{eq:regularly_varying_degrees}. Pick any $-1 < \rho < 0$ and let 
$F_\rho$ be the transformed distribution, given by Theorem 
\ref{thm:independence_spearman_tail}. We will show that $F_\rho$ is again regularly varying 
with exponent $\gamma$. Note that for this it is enough to show that
$(1 - F_\rho(x))x^\gamma$ is slowly varying. To this end fix $t > 0$ and write
\begin{align*}
	\frac{1 - F_{\rho}(tk)}{t^{-\gamma}(1 - F_{\rho}(k))} 
    &= \left(\frac{1 - F_{\rho}(tk)}{1 - F(tk)}\right) 
        \left(\frac{1 - F(k)}{1 - F_{\rho}(k)}\right) 
		\left(\frac{1 - F(tk)}{t^{-\gamma}(1 - F(k))}\right) \\
	&= \left(\frac{1 - F_{\rho}(tk)}{1 - F(tk)}\right) 
    	\left(\frac{1 - F(k)}{1 - F_{\rho}(k)}\right) \frac{\mathcal{L}(tk)}{\mathcal{L}(k)}.
\end{align*}
The product of the first two terms converge to $1$, as $k \to \infty$, by Theorem 
\ref{thm:independence_spearman_tail}, while this holds for the last term since 
$\mathcal{L}$ is slowly varying. Summarizing, we have
\[
	\lim_{k \to \infty} \frac{1 - F_{\rho}(tk)}{t^{-\gamma}(1 - F_{\rho}(k))} = 1,
\]  
which proves that $(1 - F_\rho(x))x^\gamma$ is slowly varying and hence $F_\rho$ is 
regularly varying with exponent $\gamma$. This proves Corollary \ref{cor:independence_rho_scale_free}.

\section{Spearman's rho on maximal disassortative graphs.}
\label{sec:spearman_experiments}

We will now use numerical experiments to illustrate the behavior of Spearman's rho for two types of degree 
distributions, regularly varying and Poisson. Each of these types has a parameter that can serve as
a proxy for the way in which the mass of the probability density functions is distributed over 
their support. For the regularly varying distributions this is the exponent $\gamma$, while for the
Poisson distribution it is the mean $\lambda$. We will refer to these as the parameters of the 
distribution. 

For the simulations we generated degree sequences ${\bf D}_n$ by sampling from the given 
distribution, using the \texttt{IID} algorithm, for different sizes $n$ and values for the 
parameters. We then generated graphs $G_n$ using the \texttt{DGA}. For 
each combination of size and parameter, we generated $10^3$ graphs in this manner and computed 
$\rho(G_n)$, as defined in \eqref{eq:spearmans_rho_full}, on each of them. This gives us $10^3$ samples of Spearman's rho on maximal 
disassortative graphs with the given size and degree distribution. 

To analyze the speed of convergence of $\rho(G_n)$ we computed for each combination of 
size and parameter
\[
	X_n := \left|\rho(G_n) - \mathbb{E}^\prime\left[\rho(G_n)\right]
	\right|,
\]
where $\mathbb{E}^\prime$ denotes the empirical mean, based on the $10^3$ realizations per such 
combination. We then plotted the empirical inverse cumulative distribution of $X_n$ for 
different sizes $n = 10^4, 10^5, 10^6$ and $10^7$. The results are shown in Figure 
\ref{fig:concentration_spearman_pareto} and Figure 
\ref{fig:concentration_spearman_poisson}. 

In addition, to investigate the limit of Spearman's rho in maximally disassortative graphs, we 
computed $\mathbb{E}^\prime\left[\rho(G_n)\right]$, with $n = 10^7$, for several values
of the parameter of the distribution. We then plotted these values with respect to the parameter
in Figure \ref{fig:dga_spearman}.

We will now describe the specific distributions we used for the simulations and discuss the 
results.

\subsection{Scale-free degree distribution}

Let $X$ have a Pareto distribution with scale $1$ and shape $\gamma > 1$, i.e.
\[
	f_X(t) = \begin{cases}
		\gamma t^{-1-\gamma} &\mbox{if } t \ge 1 \\
        0 &\mbox{else},
	\end{cases}
    \qquad
    1 - F_X(t) = \begin{cases}
    	t^{-\gamma} &\mbox{if } t \ge 1 \\
        1 &\mbox{else},
    \end{cases}
\]
and define $D = \floor{X}$. Then we have that $1 - F(k) = 1 - F_X(k + 1)$, so that $F$ is regularly
varying with exponent $\gamma > 1$, while
\begin{equation}\label{eq:pareto_degree_distribution}
	f(k) = F(k) - F(k-1) = k^{-\gamma} - (k + 1)^{-\gamma}.
\end{equation}
Standard calculations yield that $\sum_{k = 0}^\infty k f(k) = \zeta(\gamma)$, where $\zeta$ is the 
Riemann zeta function. Therefore we have that
\[
	f^\ast(k) = \frac{k f(k)}{\zeta(\gamma)},
\]
so that $f^\ast(1) = (1 - 2^{-\gamma})/\zeta(\gamma)$ which is increasing in $\gamma$. Moreover 
$9f^\ast(1)^2 - 6f^\ast(1)^3 - 3 > -1$ for all $\gamma \ge 2.5$, which places it in the class of 
adversary distributions we considered in the previous section.

From Figure \ref{fig:concentration_spearman_pareto} we see that $\rho(G_n)$ is already strongly
concentrated around it's mean when $n = 10^5$. Even when we the degree distribution has infinite 
variance $(\gamma = 1.5)$ we have that $X_n \le 0.025$, with high probability, for graphs 
of size $n = 10^5$. This shows, complementary to Theorem \ref{thm:main_result}, 
that the \texttt{DGA} performs very well in practice with respect to the convergence of Spearman's
rho to the minimal achievable value $\rho(D_\ast, D^\ast)$.
 
Interestingly, the simulations suggest that the concentration of $\rho(G_n)$ around its mean for 
graphs of small size becomes tighter when $\gamma$ decreases. Compare, for instance, the plots 
for $n = 10^4$ in the Figure \ref{sfig:spearman_pareto_15} - \ref{sfig:spearman_pareto_25}.

In Figure \ref{sfig:spearman_pareto} we plotted the empirical average of 
$\rho(G_n)$ against the parameter $\gamma$ of the degree density 
\eqref{eq:pareto_degree_distribution}. Observe that in contrary to the lower bound related to 
$f^\ast(1)$, we clearly see that Spearman's rho is strongly increasing as a function of $\gamma$ 
and $\rho(G_n) > -0.8$ for $\gamma > 2$. 
Therefore it follows by Theorem \ref{thm:main_result} that the rank-correlation measure Spearman's rho on any graph with 
degree distribution \eqref{eq:pareto_degree_distribution} and $\gamma > 2$ will not have a 
value smaller than $-0.8$. Moreover, when $\gamma \ge 2.5$ we see that 
$\mathbb{E}^\prime[\rho(G_n)] >-0.5$. Since this is a lower bound for Spearman's rho on any graph
with degree density \eqref{eq:pareto_degree_distribution}, a consequence could be that even 
if such graphs have a very disassortative joint degree structure they could potentially be 
classified differently.

\begin{figure}[!p]
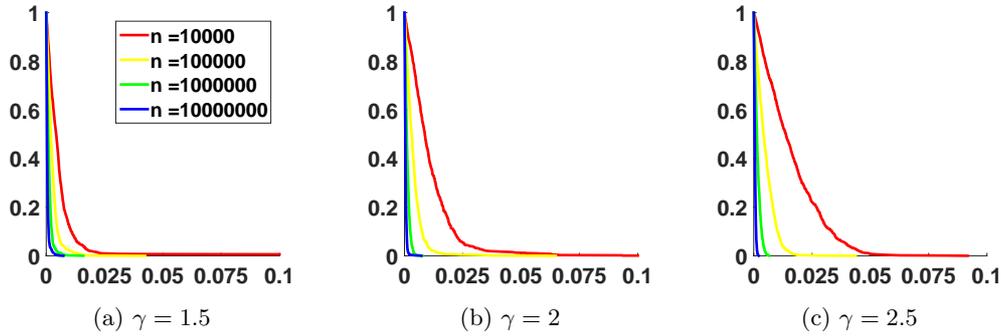

	\centering
	\begin{subfigure}{0.3\linewidth}
		\centering
		\includegraphics[scale=0.33]{pareto_15.eps}
		\caption{$\gamma = 1.5$}
		\label{sfig:spearman_pareto_15}
	\end{subfigure}~
    \begin{subfigure}{0.3\linewidth}
		\centering
		\includegraphics[scale=0.33]{pareto_20.eps}
		\caption{$\gamma = 2$}
		\label{sfig:spearman_pareto_20}
	\end{subfigure}
   \begin{subfigure}{0.3\linewidth}
		\centering
		\includegraphics[scale=0.33]{pareto_25.eps}
		\caption{$\gamma = 2.5$}
		\label{sfig:spearman_pareto_25}
	\end{subfigure}
	\caption{Plot of the inverse cdf of $X_n$ for graphs of different sizes and degree     
    distribution \eqref{eq:pareto_degree_distribution}, generated by the \texttt{DGA}, for three different 
    choices of $\gamma$.}
	\label{fig:concentration_spearman_pareto}
\end{figure}

\begin{figure}[!p]
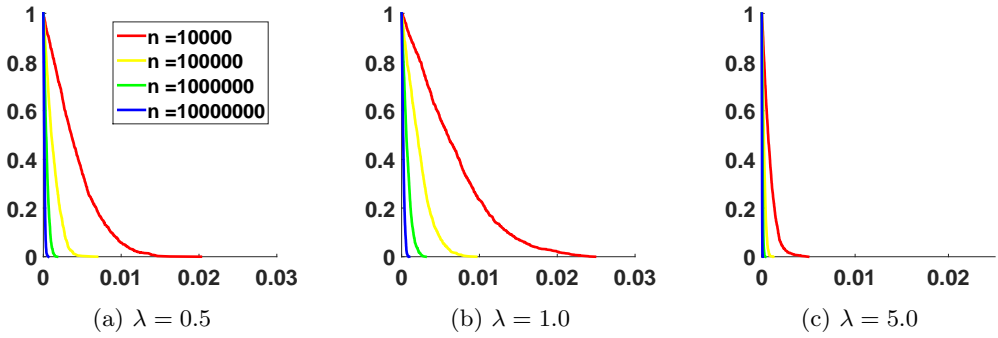

	\centering
	\begin{subfigure}{0.3\linewidth}
		\centering
		\includegraphics[scale=0.33]{poisson_05.eps}
		\caption{$\lambda = 0.5$}
	\end{subfigure}~
  \begin{subfigure}{0.3\linewidth}
		\centering
		\includegraphics[scale=0.33]{poisson_10.eps}
		\caption{$\lambda = 1.0$}
	\end{subfigure}
  \begin{subfigure}{0.3\linewidth}
		\centering
		\includegraphics[scale=0.33]{poisson_50.eps}
		\caption{$\lambda = 5.0$}
	\end{subfigure}
    \caption{Plot of the inverse cdf of $X_n$ for graphs of different sizes and Poisson degree     
    distribution, generated by the \texttt{DGA}, for three different 
    choices of $\lambda$.}
	\label{fig:concentration_spearman_poisson}
\end{figure}

\begin{figure}[!p]
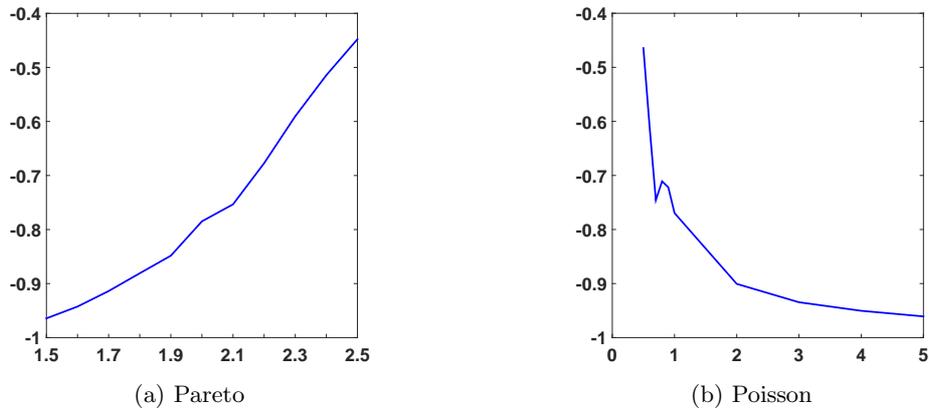

	\centering
	\begin{subfigure}{0.48\linewidth}
		\centering
		\includegraphics[scale=0.33]{spearman_pareto.eps}
		\caption{Pareto}
		\label{sfig:spearman_pareto}
	\end{subfigure}~
	\begin{subfigure}{0.48\linewidth}
		\centering
		\includegraphics[scale=0.33]{spearman_poisson.eps}
		\caption{Poisson}
		\label{sfig:spearman_poisson}
	\end{subfigure}
	\caption{Plot of the empirical average of $\rho(G_n)$ for graph of size $10^7$ and 
		degree distributions \eqref{eq:pareto_degree_distribution} and Poisson, generated by the 
		\texttt{DGA}, for different values of, respectively, $\gamma$ and $\lambda$.}
	\label{fig:dga_spearman}
\end{figure}

\subsection{Poisson degree distribution}

Let $X$ be a Poisson random variable with mean $\lambda$ and denote its probability by $f$. Then it
follows that $f^\ast(k) = f(k-1)$. Hence $f^\ast(1) = e^{-\lambda}$ is a decreasing function of 
$\lambda$ and $9f^\ast(1)^2 - 6f^\ast(1)^3 - 3 > -1$ for at least all $\lambda \le 0.4$. This is
opposite to the degree distribution \eqref{eq:pareto_degree_distribution}, where $f^\ast(1)$ was
an increasing function of the parameter $\gamma$. This is reflected in Figure 
\ref{sfig:spearman_poisson}, where we see that $\mathbb{E}^\prime[\rho(G_n)]$ decreases with 
$\lambda$. Here we again see that the shape of the degree distribution strongly influences the 
value of $\rho(G_n)$ for maximally disassortative graphs, and hence the minimal value that Spearman's rho
can attain for any graph with this degree distribution. Note that, in contrast to the case with
the regularly varying distribution, $\rho(G_n)$ is not monotonic with respect to $\lambda$. This 
could be due to the fact that the Poisson density is non-monotonic, while the density
\eqref{eq:pareto_degree_distribution} is monotonically decreasing.

In addition we also observe that, similar to the previous setting, the \texttt{DGA} performs very well 
with respect to the convergence of $\rho(G_n)$. Already for very reasonable sizes, $n \ge 10^5$, 
the deviations around the mean are, with high probability, smaller then $0.02$ for all three values
of $\lambda$.

\subsection{Important observations and insights}\label{ssec:insights}

The main observation from the simulations that we did is that the  distribution of the mass of 
the degree 
probability density is of crucial importance for the minimal value that Spearman's rho can attain.
Moreover, it seems that already for very reasonable degree distributions this minimum is much
larger than $-1$. Therefore, one should be careful when classifying a network as not being very
disassortative when a small negative value of Spearman's rho is computed.

The simulations suggest something even stronger. For this, consider the probability density 
\eqref{eq:pareto_degree_distribution} and observe that if we increase
$\gamma$ than the atoms at the end of this density lose mass, while those at the beginning gain 
mass. In this way we can use the parameter $\gamma$ to 'shift' mass between the head and tail of 
the distribution. The mean, $\lambda$, of the Poisson can be used in a similar way, although in 
this case we need to decrease $\lambda$ in order to move mass towards the head. For both distributions we see that, as the mass of the probability density 
$f$ is moved towards the tail (decreasing $\gamma/$increasing $\lambda$), the value of $\rho(G_n)$ in maximally disassortative graphs with this degree distribution decreases and seems to approach $-1$. On the other hand, as we move more mass to the head of the probability density $f$ (increasing $\gamma/$decreasing $\lambda$) the minimal value of Spearman's rho increases and seems to go to zero.

\section{Proofs} \label{sec:proofs}

Here we prove the results stated in this paper. 

\subsection{Generating degree sequences ${\bf D}_n \in \mathcal{D}_{\eta, \, \varepsilon}(f,f^\ast)$}

\begin{proof}[Proof of Lemma \ref{lem:iid_algorithm}]
We remark that altering the last degree by at most $1$, to make the sum even, constitutes a correction term of order $n^{-1}$. Hence we will consider the degrees $D_i$ as i.i.d. samples from $D$. 

Now fix $\varepsilon \le \eta/(8 + 4\eta)$ and define the events
\begin{align*}
	A_n &= \left\{\sum_{k = 0}^\infty\left|\sum_{t = 0}^k f_n(t) - f(t)\right| 
    	\le n^{-\eta/(2 + 2\eta)}\right\} \\
    B_n &= \left\{\sum_{k = 0}^\infty\left|f_n^\ast(k) - f^\ast(k)\right| \le n^{-\varepsilon} \right\}
\end{align*}
and notice that $\Prob{\Omega_n^c} \le \Prob{A_n^c} + \Prob{A_n \cap B_n^c}$. For the first term we
have, using Markov's inequality,
\[
	\Prob{A_n^c} \le n^{\frac{\eta}{2 +2\eta}}\Exp{d_1(f_n, f)} \le \bigO{n^{-\frac{\eta}{2 + 2\eta}}}
	= \bigO{n^{-\varepsilon}},
\]
as $n \to \infty$, where the second inequality follows from \cite[Proposition 4.2]{Chen2015}
and the last since $-\eta/(2 + 2\eta) < -\eta/(8 + 4\eta)$. Hence, we need to show that, as $n \to 
\infty$
\[
	\Prob{A_n \cap B_n^c} \le \bigO{n^{-\varepsilon}}.
\]
For this we compute that,
\begin{align*}
	\sum_{k = 0}^\infty\left|f_n^\ast(k) - f^\ast(k)\right| &= \sum_{k = 0}^\infty
	\left|\frac{1}{L_n} \sum_{i = 1}^n D_i \ind{D_i = k} 
	- \frac{\Exp{D \ind{D = k}}}{\mu}\right| \\
	&\le \sum_{k = 0}^\infty \left|\frac{1}{L_n} - \frac{1}{\mu n}\right|\sum_{i = 1}^n D_i 
		\ind{D_i = k} \\
	&\hspace{10pt}+ \sum_{k = 0}^\infty \frac{1}{\mu n}\left|\sum_{i = 1}^n D_i \ind{D_i = k} 
		- \Exp{D \ind{D = k}}\right| \\
	&= \frac{|L_n - \mu n|}{\mu n} + \sum_{k = 0}^\infty \frac{1}{\mu n}\left|\sum_{i = 1}^n 
		X_{ik}\right|,
\end{align*}
where we defined $X_{i k} = D_i\ind{D_i = k} - \Exp{D \ind{D = k}}$. Now observe that 
conditioned on $A_n$ we have that 
\begin{align*}
	\frac{|L_n - \mu n|}{\mu n^{1 - \varepsilon}} 
	&\le \frac{n^{1 - \eta/(2 + 2\eta)}}{\mu n^{1 - \varepsilon}}
		\le\mu^{-1} n^{\varepsilon - \eta/2(1 + \eta)} \\
	&\le \mu^{-1} n^{-\eta/4(1 + \eta)} \le \mu^{-1} n^{-\eta/4(1 + 2\eta)},
\end{align*}
so that
\[
	\Prob{A_n \cap B_n^c} \le \bigO{n^{-\varepsilon}}
    + \Prob{\sum_{k = 0}^\infty \frac{1}{\mu n}\left|\sum_{i = 1}^n X_{ik}\right| > 
    n^{-\varepsilon}},
\]
as $n \to \infty$.

To analyze the last probability take $a_n = \floor{n^{2\varepsilon/\eta}}$. Then,
\begin{align*}
	\Prob{\sum_{k = 0}^\infty \frac{1}{\nu n}\left|\sum_{i = 1}^n X_{ik}\right| > n^{-\varepsilon}}
    &\le \frac{1}{\mu n^{1 - \varepsilon}} \sum_{k = 0}^{a_n} \Exp{\left|\sum_{i = 1}^n 
    	X_{ik}\right|^2}^{1/2} \\
		&\hspace{10pt}+ \frac{1}{\mu n^{1 - \varepsilon}} \sum_{k = a_n + 1}^\infty 
        \Exp{\left|\sum_{i = 1}^n X_{ik}\right|} \\
    &\le \frac{1}{\mu n^{1/2 - \varepsilon}} \sum_{k = 0}^{a_n} \text{Var}(X_{1k})^{1/2} 
    	+  \frac{n^{\varepsilon}}{\mu} \sum_{k = a_n + 1}^\infty \Exp{|X_{1k}|} \\
    &\le \frac{1}{\mu n^{1/2 - \varepsilon}} \sum_{k = 0}^{a_n} k  +  \frac{2n^{\varepsilon}}{\mu} 
    	\sum_{k = a_n + 1}^\infty \Exp{D \ind{D = k}} \\
    &\le \frac{a_n(a_n + 1)}{2\mu n^{1/2 - \varepsilon}} + \frac{2n^{\varepsilon}}{\mu} 
			\Exp{D \ind{D > a_n}} \\
    &\le \frac{a_n(a_n + 1)}{2\mu n^{1/2 - \varepsilon}} + 2n^{\varepsilon}a_n^{-\eta} \\
    &= \bigO{n^{-\varepsilon}}, 
\end{align*}
as $n \to \infty$. Here, for the last line, we used that
\[
	\frac{4\varepsilon}{\eta} - \frac{1}{2} + \varepsilon \le  \frac{1}{2 + \eta} - \frac{1}{2} 
	+ \frac{\eta}{8 + 4\eta} = -\frac{\eta}{8 + 4\eta} \le -\varepsilon,
\]
so that
\[
	n^{\varepsilon - \frac{1}{2}} a_n^2 = \bigO{n^{\frac{4\varepsilon}{\eta} - \frac{1}{2} + 
	\varepsilon}} = \bigO{n^{-\varepsilon}},
\]
as $n \to \infty$.
\end{proof}

\subsection{Optimality of \emph{DGA}}\label{ssec:proof_optimality_dga}

Theorem \ref{thm:optimality_algorithm} is a consequence of the following lemma.

\begin{lemma}\label{lem:permutation_optimization}
Consider a sequence $0 \le a_1 \leq  \ldots \leq a_m$ and let $\mathcal{P}_m$ denote the set of 
permutations of $\{1, \dots, m\}$. Then
\[
\min_{\sigma \in \mathcal{P}_m}  \sum_k a_k a_{\sigma(k)}  = \sum_k  a_k a_{m-k+1}
\]
and this minimum is achievable for a permutation $\sigma$ if and only if 
\[
	a_{\sigma(1)} \ge a_{\sigma(2)} \geq \ldots  \geq a_{\sigma(n) }.
\]
\end{lemma}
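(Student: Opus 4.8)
The plan is to prove this by a standard exchange (rearrangement) argument. The key claim is that for a sorted sequence $0 \le a_1 \le \dots \le a_m$, any permutation $\sigma$ that is \emph{not} antitone (i.e. does not reverse the order) can be strictly improved — or at least not worsened — by a transposition, and that the antitone arrangement is the unique minimizer up to ties. First I would observe that $\sum_k a_k a_{\sigma(k)}$ depends on $\sigma$ only through the multiset of products, so it suffices to consider how the objective changes under a single swap. Suppose there exist indices $i < j$ with $a_{\sigma(i)} < a_{\sigma(j)}$ (a "bad pair", violating antitonicity). Since $i < j$ we have $a_i \le a_j$. Swapping the values $\sigma(i)$ and $\sigma(j)$ changes the objective by
\[
	\left(a_i a_{\sigma(j)} + a_j a_{\sigma(i)}\right) - \left(a_i a_{\sigma(i)} + a_j a_{\sigma(j)}\right)
	= -\left(a_j - a_i\right)\left(a_{\sigma(j)} - a_{\sigma(i)}\right) \le 0,
\]
so the swap never increases the objective, and it strictly decreases it when $a_i < a_j$ and $a_{\sigma(i)} < a_{\sigma(j)}$ both hold strictly.

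Next I would turn this local step into a global statement. Starting from any $\sigma$, repeatedly apply such swaps; each swap strictly reduces the number of "inversions with respect to reverse order" (pairs $i<j$ with $a_{\sigma(i)} < a_{\sigma(j)}$), so after finitely many steps we reach a permutation $\tau$ for which no bad pair exists, i.e. $a_{\tau(1)} \ge a_{\tau(2)} \ge \dots \ge a_{\tau(m)}$, and $\sum_k a_k a_{\tau(k)} \le \sum_k a_k a_{\sigma(k)}$. This shows the minimum is attained at some antitone arrangement. Finally, any two antitone arrangements give the same value of $\sum_k a_k a_{\tau(k)}$: indeed if $a_{\tau(1)} \ge \dots \ge a_{\tau(m)}$ then the sorted list $(a_{\tau(1)}, \dots, a_{\tau(m)})$ is just $(a_m, a_{m-1}, \dots, a_1)$ as a tuple, so $\sum_k a_k a_{\tau(k)} = \sum_k a_k a_{m-k+1}$. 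This simultaneously identifies the minimum value and shows that \emph{every} minimizer must be antitone — the latter because, reading the argument in reverse, any non-antitone $\sigma$ admits a strictly improving swap (using the strict part of the inequality), provided the $a_k$ are such that strict improvement is possible; when there are equal values among the $a_k$, the objective is genuinely constant across the corresponding permutations, and the phrase "achievable if and only if $a_{\sigma(1)} \ge \dots \ge a_{\sigma(m)}$" is still correct as stated since any such $\sigma$ realizes the minimum and any minimizer can be taken to this form.

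I do not expect any serious obstacle here; the one point requiring a little care is the "only if" direction in the presence of ties among the $a_k$. The clean way to handle it is: a permutation $\sigma$ achieves the minimum $\sum_k a_k a_{m-k+1}$ if and only if it has no \emph{strictly} bad pair, i.e. no $i<j$ with $a_i < a_j$ and $a_{\sigma(i)} < a_{\sigma(j)}$; and this condition is equivalent to $a_{\sigma(1)} \ge \dots \ge a_{\sigma(m)}$ because whenever $a_{\sigma(i)} < a_{\sigma(i+1)}$ for consecutive indices one can find (by sortedness of $a$) a witnessing inequality $a_i \le a_{i+1}$, which is strict precisely when it must be to force a contradiction. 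I would spell this equivalence out carefully but briefly, and then invoke the swap computation above to conclude. The connection back to Theorem \ref{thm:optimality_algorithm} is then immediate: taking $m = L_n$ and $(a_1, \dots, a_{L_n})$ to be the stub-degree list $\mathcal F_n^\ast$-weighted and sorted, the \texttt{DGA} pairs stub $k$ with stub $L_n + 1 - k$, which is exactly the antitone matching, hence minimizes $\sum_{i \to j} \mathcal F_n^\ast(D_i)\mathcal F_n^\ast(D_j)$.
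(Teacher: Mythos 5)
Your argument is the same exchange (rearrangement) argument the paper uses: for a pair $i<j$ with $a_{\sigma(i)}<a_{\sigma(j)}$, transposing $\sigma(i)$ and $\sigma(j)$ changes the objective by $(a_i-a_j)(a_{\sigma(j)}-a_{\sigma(i)})\le 0$, with strict decrease when both inequalities are strict; the antitone arrangement realizes the value $\sum_k a_k a_{m-k+1}$; and your bubble-sort style termination argument (each swap strictly reduces the number of pairs $i<j$ with $a_{\sigma(i)}<a_{\sigma(j)}$) correctly upgrades the local step to the global minimality statement. Everything that Theorem \ref{thm:optimality_algorithm} actually needs — the value of the minimum and the fact that the antitone pairing attains it — is established correctly.

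The one step that does not hold is the equivalence you assert at the end: that ``no $i<j$ with $a_i<a_j$ and $a_{\sigma(i)}<a_{\sigma(j)}$'' is the same as $a_{\sigma(1)}\ge\dots\ge a_{\sigma(m)}$. With ties this is false. Take $a=(1,1,2)$ and $\sigma$ producing the arrangement $(a_{\sigma(1)},a_{\sigma(2)},a_{\sigma(3)})=(1,2,1)$: the only pairs with $a_i<a_j$ are $(1,3)$ and $(2,3)$, and in both cases $a_{\sigma(i)}\ge a_{\sigma(j)}$, so there is no strictly bad pair; the objective is $1+2+2=5=\sum_k a_k a_{4-k}$, so $\sigma$ is a minimizer; yet the arrangement is not non-increasing. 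Your proposed repair via consecutive indices fails here precisely because the witnessing inequality $a_1\le a_2$ is an equality, so no contradiction is forced. The correct characterization of the minimizers is ``no strictly bad pair,'' and the ``only if'' clause of the lemma as stated is simply false in the presence of ties. You should be aware, however, that the paper's own proof has exactly the same defect: it only derives that a minimizer admits no pair with $a_i<a_j$ and $a_{\sigma(i)}<a_{\sigma(j)}$, and silently identifies this with antitonicity. Since only the ``if'' direction and the minimum value are used downstream, this does not affect any other result in the paper, but if you want a fully correct statement you should either restrict to distinct $a_k$ or restate the characterization in terms of strictly bad pairs.
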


\begin{proof}
\hfill \\
\noindent $[\Rightarrow]$
If  $a_{\sigma(1) } \geq \ldots  \geq a_{\sigma(n) }  $ then $\sum_k a_k a_{\sigma(k)}  = 
\sum_k  a_k a_{m-k+1}$ \par \smallskip

\noindent $[\Leftarrow]$ 
Assume that $\sigma = \arg \min_{\sigma \in S_m}  \sum_k a_k a_{\sigma(k)}$ but 
there exist $a_i < a_j$ such that $a_{\sigma(i)} < a_{\sigma(j)}$. Consider
$\sigma^\ast = \sigma \cdot (i j)$  then $\sum_k a_k a_{\sigma(k)} - \sum_k a_k a_{\sigma^\ast(k)} =  
(a_i - a_j)(a_{\sigma(i)} - a_{\sigma(j)}) > 0$ which contradicts the initial assumption.
\end{proof}

\begin{proof}[Proof of Theorem \ref{thm:optimality_algorithm}]

Consider a degree sequence ${\bf D}_n$, rank it in ascending order and let $\phi(k)$ denotes 
the node with rank $k$ among this degree sequence, as defined in the description of the 
\texttt{DGA}. Now define the sequence ${\bf a}_{L_n}$ by
\begin{equation}\label{eq:mathcalF_sequence_ak}
	a_k = \mathcal{F}_n^\ast(D_{\phi(i)}) \quad \text{for all } \sum_{t = 1}^{i - 1} 
	D_{\phi(t)} < k \le \sum_{t = 1}^i D_{\phi(t)},
\end{equation}
where we use the convention that $\sum_{t = 1}^0 D_{\phi(t)} = 0$.
With this definition, the sequence ${\bf a}_{L_n}$ looks as follows
\begin{align*}
	&\hspace{-20pt}a_1 \leq \ldots \leq a_k \leq \ldots =  \\
	&\underbrace{\mathcal{F}_n^\ast\left(D_{\phi(1)}\right) \leq \ldots \leq  \mathcal{F}_n^\ast
		\left(D_{\phi(1)}\right)}_{D_{\phi(1)}} \leq  \underbrace{\mathcal{F}_n^\ast \left(
		D_{\phi(2)}\right) \leq \ldots \leq  \mathcal{F}_n^\ast \left(
		D_{\phi(2)}\right)}_{D_{\phi(2)}} \leq \ldots.
\end{align*}

Next, we note that for each graph $G \in \mathcal{G}({\bf D}_n)$ there exits a permutation 
$\sigma_G$ of such that  
\[
	\sum_{i \to j \in G} \mathcal{F}_n^\ast(D_i)\mathcal{F}_n^\ast(D_j) 
    = \sum_{k = 1}^{L_n}  a_k a_{\sigma_G(k)}.
\]
Any directed graph, has a corresponding permutation $\sigma$ of $\{1, \dots, 
L_n\}$ which defines how the outbound and inbound stubs of the bi-degree sequence are paired to obtain the graph. However, not every such permutation 
corresponds to a graph which is the bi-directed version of an undirected graph, i.e. for each edge 
$i \to j$ there is exactly one edge $j \to i$. Therefore let $\mathcal{P}({\bf D}_n)$
denote the set of all permutations of $\{1, \dots, L_n\}$ which do corresponds to an undirected graph,
in its directed representation. Then the optimization problem
\eqref{eq:approx_optimization_problem} is equivalent to the following problem
\begin{equation}\label{eq:permutation_optimization_graphs}
	\min_{\sigma \in \mathcal{P}({\bf D}_n)}\sum_{k = 1}^{L_n}  a_k a_{\sigma^\ast(k)}.
\end{equation}

Now, recall the partitioned representation of the \texttt{DGA} we introduced in Section 
\ref{ssec:joint_degree_distribution}, see Figure \ref{fig:edge_degree_structure_dga}. From 
this description of the algorithm it is not hard to see that, if ${\bf a}_{L_n}$ is defined by 
\eqref{eq:mathcalF_sequence_ak}, then there exists a permutation $\sigma^\ast$ with the 
property that 
\[
	a_{\sigma^\ast(1)} \ge a_{\sigma^\ast(2)} \ge \dots \ge a_{\sigma^\ast(L_n)},
\]
such that the \texttt{DGA} pairs the stubs corresponding to $a_i$ and $a_{\sigma^\ast(i)}$. 
Therefore, Lemma 
\ref{lem:permutation_optimization} implies that
\[
	\sum_{k = 1}^{L_n} a_k a_{\sigma^\ast(k)} 
	= \min_{\sigma \in \mathcal{P}_{L_n}} \sum_{k = 1}^{L_n} a_k a_{\sigma(k)},
\]
where $\mathcal{P}_{L_n}$ denotes the set of all permutations of $\{1, \dots, L_n\}$. Since
$\mathcal{P}({\bf D}_n) \subseteq \mathcal{P}_{L_n}$, this implies that 
\[
	\sum_{k = 1}^{L_n} a_k a_{\sigma^\ast(k)} 
	= \min_{\sigma \in \mathcal{P}({\bf D}_n)}\sum_{k = 1}^{L_n}  a_k a_{\sigma^\ast(k)},
\]
which proves that the \texttt{DGA} solves \eqref{eq:permutation_optimization_graphs} and hence
it solves \eqref{eq:approx_optimization_problem}
\end{proof}

\subsection{Simplicity of $G_n^\ast$}\label{ssec:proof_G_ast_simple}

\begin{proof}[Proof of Proposition \ref{prop:G_ast_simple}]
Let $z_n$ and $z$ be defined as in \eqref{eq:relation_zn} and \eqref{eq:relation_z}, 
respectively, and define the event
\begin{align*}
	A_n &= \left\{z_n \le z + 1\right\}.
\end{align*}
Then, by definition of $z_n$, we have that $F^\ast(z + 1) > 1/2$ and hence
\begin{align*}
	\Prob{z_n > z + 1, \Omega_n} &\le \Prob{F_n^\ast(z+1) < \frac{1}{2}, \Omega_n} \\
	&\le \Prob{F^\ast(z + 1) - \frac{1}{2} < |F^\ast(z+1) - F^\ast_n(z + 1)|, \Omega} \\
	&\le \frac{\Exp{|F^\ast(z+1) - F^\ast_n(z + 1)|\ind{\Omega_n}}}{F^\ast(z + 1) - \frac{1}{2}} \\
	&\le \bigO{n^{-\varepsilon}},
\end{align*}
as $n \to \infty$.
Therefore, if we define $\Lambda_n = A_n \cap \Omega_n$, it is enough to show that
\[
	1 - \Prob{\mathcal{S}_n, \Lambda_n} \le O\left(n^{-\varepsilon} + n^{-1/2}
    + n^{-\eta/2}\right). 
\]
In order to analyze this probability, note that by construction there are no self-loops in 
$G_n^\ast$. Moreover, a node $i$ with $D_i < z_n$ can only have more than one edge to a node $j$ 
when $D_j > N_{D_i}$. Hence, when $G_n^\ast$ is not simple it means that for some $1 \le k \le z_n$
we must have that $0 < N_k < \max_{1 \le j \le n} D_j$, hence
\[
	\left(\mathcal{S}_n^\ast\right)^c \subseteq \bigcup_{k = 1}^{z_n} 
    \left\{0 < N_k < \max_{1 \le j \le n} D_j\right\}.
\]
Therefore, if we denote $f_{min} = \min_{1 \le k \le z} f(k) > 0$, it follows from the union bound that
\begin{align*}
	 1 - \Prob{\mathcal{S}_n, \Lambda_n} 
     &\le \sum_{k = 1}^{z + 1}\Prob{0 < N_k < \max_{1 \le j \le n} D_j, \Lambda_n} \\
    &= \sum_{k = 1}^{z + 1}\Prob{0 < f_n(k) 
    	< \frac{\max_{1 \le j \le n} D_j}{n}, \Lambda_n} \\
    &\le \sum_{k = 1}^{z + 1}\Prob{f(k) - n^{-\varepsilon} < \frac{\max_{1 \le j \le n} D_j}{n}, 
			\Lambda_n} \\
		&\le \sum_{k = 1}^{z + 1} \Prob{f_{min} < \frac{\max_{1 \le j \le n} D_j}{n} + n^{-\varepsilon}, 
			\Lambda_n} \\
		&\le \frac{(z+1)n^{-1}\Exp{\max_{1 \le j \le n} D_j \ind{\Lambda_n}} 
			+ (z+1)n^{-\varepsilon}}{f_{min}} \\
    &\le \frac{(z + 1)\Exp{\max_{1 \le j \le n} D_j 
    	\ind{D_j > \sqrt{n}} \ind{\Lambda_n}}}{n f_{min}} \\
    &\hspace{10pt}+ \frac{(z + 1)n^{-1/2}}{f_{min}} + \frac{(z+1)n^{-\varepsilon}}{f_{min}}.\\
\end{align*}
The last probability is $O(n^{-1/2} + n^{-\varepsilon})$, as $n \to \infty$. We will now show that 
the other probability is $O(n^{-\varepsilon} + n^{-\eta/2})$. For this we note that
\[
	\frac{\max_{1 \le j \le n} D_j \ind{D_j > \sqrt{n}}}{n} \le \frac{1}{n}\sum_{i = 1}^n D_i 
	\ind{D_i > \sqrt{n}} = \frac{L_n}{n}(1 - F_n^\ast(\sqrt{n})),
\]
and 
\[
	1 - F^\ast(\sqrt{n}) = \Exp{D \ind{D > \sqrt{n}}} \le n^{-\eta/2} \Exp{D^{1 + \eta}}.
\]
Therefore we obtain that, as $n \to \infty$,
\begin{align*}
	\frac{\Exp{\max_{1 \le j \le n} D_j \ind{\Lambda_n}}}{n f_{min}}
	&\le \frac{\Exp{L_n\left|F_n^\ast(\sqrt{n}) - 
		F^\ast(\sqrt{n})\right|\ind{\Lambda_n}}}{n f_{min}} \\
	&\hspace{10pt}+ \frac{\Exp{2L_n(1 - F^\ast(\sqrt{n})\ind{\Lambda_n}}}{n f_{min}} \\
	&\le \frac{\Exp{(\nu n + n^{1-\varepsilon})\sup_{k \ge 0}\left|F_n^\ast(k) - 
		F^\ast(k)\right|\ind{\Lambda_n}}}{n f_{min} } \\
	&\hspace{10pt}+ \frac{\Exp{2(\nu n + n^{1-\varepsilon})(1 - F^\ast(\sqrt{n})\ind{\Lambda_n}}}
		{n f_{min}} \\
	&\le \frac{(\nu + n^{-\varepsilon})n^{-\varepsilon}}{n f_{min}} 
		+ \frac{2(\nu + n^{-\varepsilon})n^{-\eta/2}\Exp{D^{1 + \eta}}}{f_{min}} \\
	&\le O\left(n^{-\varepsilon} + n^{-\eta/2}\right),
\end{align*}
which completes the proof.
\end{proof}

\subsection{Joint degree distribution}\label{ssec:proof_joint_degree_distribution}

Here we will address the convergence of the empirical joint degree density $h_n(k, \ell)$, as defined in Proposition \ref{prop:empirical_joint_degree_distribution}, to the density $h(k, \ell)$ as defined in \eqref{eq:joint_degree_density}.

We will use two technical lemmas, which deal with the difference between the functions $\psi_n$
and $\psi$, and $\mathcal{E}_n$ and $\mathcal{E}$. 

\begin{lemma}\label{lem:technical_psi_n_vs_psi}
Let ${\bf D}_n \in \mathcal{D}_{\eta, \, \varepsilon}(f, f^\ast)$. Then, for any $k, \ell \ge 0$, $0 < 
\delta < \varepsilon$ and $K > 0$
\[
	\Prob{|\psi_n(k,\ell) - \psi(k,\ell)|\mathcal{E}_n(k,\ell) > Kn^{-\delta}, \Omega_n} 
    \le \bigO{n^{-\varepsilon + \delta}}.
\]
\end{lemma}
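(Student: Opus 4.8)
The plan is to prove this essentially as a \emph{deterministic} statement: I would show that, on the event $\Omega_n$, the event whose probability is being estimated is actually empty once $n$ is large, so that the probability in question equals $0$ for all large $n$, and the bound $\bigO{n^{-\varepsilon+\delta}}$ then holds trivially (the finitely many small values of $n$ being absorbed into the implied constant, since the probability is at most $1$). No concentration inequality is needed; all the work goes into elementary manipulations of $\psi_n,\psi,\mathcal{E}_n,\mathcal{E}$.

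First I would record the structural facts. Since $\psi_n,\psi$ are $\{0,1\}$-valued, the event $\{|\psi_n(k,\ell)-\psi(k,\ell)|\,\mathcal{E}_n(k,\ell)>Kn^{-\delta}\}$ forces $\psi_n(k,\ell)\neq\psi(k,\ell)$ and $\mathcal{E}_n(k,\ell)>Kn^{-\delta}>0$. Next, directly from the definition of $\mathcal{E}_n$, if $\psi_n(k,\ell)=0$ then either $1-F_n^\ast(k)\ge F_n^\ast(\ell)$ or $1-F_n^\ast(k-1)\le F_n^\ast(\ell-1)$; in the first case $\max(1-F_n^\ast(k),F_n^\ast(\ell-1))\ge 1-F_n^\ast(k)\ge F_n^\ast(\ell)\ge\min(1-F_n^\ast(k-1),F_n^\ast(\ell))$, and in the second case $\min(1-F_n^\ast(k-1),F_n^\ast(\ell))\le 1-F_n^\ast(k-1)\le F_n^\ast(\ell-1)\le\max(1-F_n^\ast(k),F_n^\ast(\ell-1))$, so either way $\mathcal{E}_n(k,\ell)\le 0$. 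The identical computation with $F^\ast$ in place of $F_n^\ast$ gives $\psi(k,\ell)=0\Rightarrow\mathcal{E}(k,\ell)\le 0$. Combining these: on the event in the lemma $\mathcal{E}_n(k,\ell)>0$, hence $\psi_n(k,\ell)=1$, hence (as $\psi_n\neq\psi$) $\psi(k,\ell)=0$, hence $\mathcal{E}(k,\ell)\le 0$.

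The one quantitative ingredient is the regularity provided by $\Omega_n$. On $\Omega_n$ we have $\sum_{t\ge 0}|f_n^\ast(t)-f^\ast(t)|\le n^{-\varepsilon}$, hence $\sup_{m\ge 0}|F_n^\ast(m)-F^\ast(m)|\le n^{-\varepsilon}$ and likewise for the tails $1-F_n^\ast(m)$; since perturbing the two arguments of a $\min$ (or $\max$) by at most $n^{-\varepsilon}$ perturbs its value by at most $n^{-\varepsilon}$, it follows that $|\mathcal{E}_n(k,\ell)-\mathcal{E}(k,\ell)|\le 2n^{-\varepsilon}$. Putting the pieces together, on $\Omega_n\cap\{|\psi_n-\psi|\,\mathcal{E}_n>Kn^{-\delta}\}$ we would get
\[
	Kn^{-\delta}<\mathcal{E}_n(k,\ell)\le\mathcal{E}(k,\ell)+2n^{-\varepsilon}\le 2n^{-\varepsilon},
\]
i.e. $Kn^{\varepsilon-\delta}<2$, which is false for all $n\ge n_0(K,\varepsilon,\delta)$ because $\varepsilon-\delta>0$. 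Hence the probability in the lemma is $0$ for $n\ge n_0$ and at most $1\le n_0^{\varepsilon-\delta}\,n^{-\varepsilon+\delta}$ for $n<n_0$, so it is $\bigO{n^{-\varepsilon+\delta}}$ as $n\to\infty$.

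I do not expect a real obstacle here; the only points needing care are the degenerate configurations — $k$ or $\ell$ equal to $0$, or limiting intervals that merely touch so that $\psi(k,\ell)=0$ while $\mathcal{E}(k,\ell)=0$ — but the chain of inequalities above still holds verbatim in each of those cases, so they do not affect the conclusion. The only genuinely computational step is the stability estimate $|\mathcal{E}_n(k,\ell)-\mathcal{E}(k,\ell)|\le 2n^{-\varepsilon}$, which is immediate from the uniform closeness of $F_n^\ast$ to $F^\ast$ on $\Omega_n$; the $\bigO{n^{-\varepsilon+\delta}}$ form (rather than the sharper ``$=0$ eventually'') is kept because it is the convenient shape for summing over $(k,\ell)$ in the proof of Theorem~\ref{thm:joint_degree_distribution}.
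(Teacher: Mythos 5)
Your proof is correct, and it takes a genuinely different route from the paper. The paper proves this lemma by a six-way case analysis on the orderings of $1-F^\ast(k)$ versus $F^\ast(\ell)$ and of $1-F^\ast(k-1)$ versus $F^\ast(\ell-1)$, applying Markov's inequality (via its Lemma \ref{lem:technical_inequalities_Fast}) in each case to bound the probability that an empirical inequality disagrees with its limiting counterpart; the implied constants there involve gaps such as $F^\ast(\ell)-1+F^\ast(k)$ and hence depend on $(k,\ell)$. Your argument replaces all of this with the single structural observation that $\psi_n(k,\ell)=0$ forces $\mathcal{E}_n(k,\ell)\le 0$ (and likewise for $\psi,\mathcal{E}$), so that on the event in question one must have $\psi_n=1$, $\psi=0$, $\mathcal{E}\le 0$ and $\mathcal{E}_n>Kn^{-\delta}$, which contradicts the uniform stability bound $|\mathcal{E}_n-\mathcal{E}|\le 2n^{-\varepsilon}$ valid on $\Omega_n$ once $Kn^{\varepsilon-\delta}\ge 2$. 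I checked the two implications ($\psi_n=0\Rightarrow\mathcal{E}_n\le0$ via the two alternatives in the definition of $\psi_n$, and the $\min$/$\max$ perturbation bound), and both are sound; the equality (``touching'') cases that force separate treatment in the paper are absorbed automatically since $\mathcal{E}\le 0$ suffices. Your version is strictly stronger: the probability is exactly zero for $n\ge n_0$ with $n_0$ depending only on $K,\varepsilon,\delta$ and \emph{not} on $(k,\ell)$, so the bound is uniform over all degree pairs — which would in fact let one dispense with the restriction to $k,\ell\le p$ in the corresponding part of the proof of Theorem \ref{thm:joint_degree_distribution}. The only cost is that the statement proved is deterministic-on-$\Omega_n$ rather than genuinely probabilistic, but that is all the lemma asks for.
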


\begin{lemma}\label{lem:technical_mathcalE_n_vs_mathcalE}
Let ${\bf D}_n \in \mathcal{D}_{\eta, \, \varepsilon}(f, f^\ast)$. Then, for any $k, \ell \ge 0$, $K > 0$
and $0 < \delta < \varepsilon$,
\[
	\Prob{\left|\mathcal{E}(k,\ell) - \mathcal{E}_n(k,\ell)\right| > Cn^{-\delta}, \Omega_n}
    \le \bigO{n^{-\varepsilon + \delta}}.
\]
\end{lemma}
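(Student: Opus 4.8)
The plan is to show that on the event $\Omega_n$ the quantity $\left|\mathcal{E}(k,\ell) - \mathcal{E}_n(k,\ell)\right|$ is at most $2n^{-\varepsilon}$, which for $n$ large is smaller than $Cn^{-\delta}$; consequently the event in the statement is empty for all large $n$ and its probability is eventually $0$, trivially of order $\bigO{n^{-\varepsilon+\delta}}$. The key observation is that the second term in the maximum defining $\Omega_n$ gives more than $1$-norm control on the size-biased densities: by the triangle inequality, on $\Omega_n$ we have, for every $m \ge 0$,
\[
	\left|F_n^\ast(m) - F^\ast(m)\right| = \left|\sum_{t = 0}^m \bigl(f_n^\ast(t) - f^\ast(t)\bigr)\right| \le \sum_{t = 0}^\infty \left|f_n^\ast(t) - f^\ast(t)\right| \le n^{-\varepsilon},
\]
so $\sup_{m \ge 0}\left|F_n^\ast(m) - F^\ast(m)\right| \le n^{-\varepsilon}$ whenever $\Omega_n$ occurs.

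Next I would use the elementary Lipschitz estimates $\left|\min(a,b) - \min(a',b')\right| \le \max\bigl(|a-a'|,|b-b'|\bigr)$ and the analogous bound for $\max$, applied to the two terms in the definition \eqref{eq:def_mathcal_E} of $\mathcal{E}(k,\ell)$ and the corresponding expression \eqref{eq:def_mathcal_E_n} for $\mathcal{E}_n(k,\ell)$. The four arguments appearing there are $1 - F^\ast(k-1)$, $F^\ast(\ell)$, $1 - F^\ast(k)$, $F^\ast(\ell-1)$ together with their empirical counterparts, each pair differing by at most $\sup_{m \ge 0}\left|F_n^\ast(m) - F^\ast(m)\right|$. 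Combining the bounds for the $\min$-term and the $\max$-term yields, on $\Omega_n$ and for every fixed pair $k,\ell$,
\[
	\left|\mathcal{E}(k,\ell) - \mathcal{E}_n(k,\ell)\right| \le 2 \sup_{m \ge 0}\left|F_n^\ast(m) - F^\ast(m)\right| \le 2 n^{-\varepsilon}.
\]

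Finally, since $\delta < \varepsilon$ there is an $n_0$, depending only on the constant in the statement and on $\varepsilon,\delta$, such that $2n^{-\varepsilon} < Cn^{-\delta}$ for all $n \ge n_0$; hence the set $\left\{\left|\mathcal{E}(k,\ell) - \mathcal{E}_n(k,\ell)\right| > Cn^{-\delta}\right\} \cap \Omega_n$ is empty for $n \ge n_0$, and its probability equals $0$, which is in particular $\bigO{n^{-\varepsilon+\delta}}$. There is no genuine obstacle here: in contrast to Lemma \ref{lem:technical_psi_n_vs_psi}, where the indicator $\psi_n$ can differ from $\psi$ with small but positive probability even on $\Omega_n$ and so forces the honest $\bigO{n^{-\varepsilon+\delta}}$ rate, the function $\mathcal{E}_n$ depends $1$-Lipschitz-continuously on the empirical size-biased cdf, and the only point requiring a little care is extracting the uniform cdf bound above from the $1$-norm bound on $f_n^\ast - f^\ast$. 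The $\bigO{n^{-\varepsilon+\delta}}$ form of the conclusion is retained purely for uniformity with the companion lemma.
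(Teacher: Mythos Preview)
Your argument is correct and in fact yields a slightly stronger conclusion than stated: on $\Omega_n$ the deviation $\left|\mathcal{E}(k,\ell)-\mathcal{E}_n(k,\ell)\right|$ is deterministically at most $2n^{-\varepsilon}$, so for all sufficiently large $n$ the event in the lemma is empty and its probability is $0$, not merely $\bigO{n^{-\varepsilon+\delta}}$. The two ingredients you use---the uniform cdf bound $\sup_m|F_n^\ast(m)-F^\ast(m)|\le n^{-\varepsilon}$ extracted from the $\ell^1$ control in $\Omega_n$, and the $1$-Lipschitz continuity of $\min$ and $\max$ in each coordinate---are exactly what is needed, and your verification of the Lipschitz bound is sound.

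The paper takes a genuinely different and considerably longer route. Rather than exploiting the Lipschitz property of $\min$ and $\max$, it performs a full case analysis on the relative orderings of $1-F^\ast(k-1)$ versus $F^\ast(\ell)$ and of $1-F^\ast(k)$ versus $F^\ast(\ell-1)$, which determine which branch of $\min$/$\max$ is active in $\mathcal{E}(k,\ell)$. It then introduces the events $I_n,J_n$ that govern the corresponding branches for $\mathcal{E}_n(k,\ell)$, uses Lemma~\ref{lem:technical_inequalities_Fast} to show that the ``wrong-branch'' events $I_n^c$, $J_n^c$ (or $I_n$, $J_n$, depending on the case) have probability $\bigO{n^{-\varepsilon}}$, and on the ``right-branch'' event bounds the difference directly via $|F_n^\ast-F^\ast|$. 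This mirrors the structure of the proof of Lemma~\ref{lem:technical_psi_n_vs_psi}, where such a case split is genuinely necessary because $\psi_n$ is an indicator and can jump even under small perturbations of $F_n^\ast$. For $\mathcal{E}_n$, however, the case split is unnecessary overhead: your Lipschitz argument handles all branches simultaneously and avoids invoking Lemma~\ref{lem:technical_inequalities_Fast} altogether. The paper's approach does have the minor expository advantage of keeping the two technical lemmas parallel in style, but your proof is shorter, more transparent, and delivers a sharper deterministic bound.
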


The proof of both lemmas is postponed till the end of this section. We will first give the proof
of Theorem \ref{thm:joint_degree_distribution}.

\begin{proof}[Proof of Theorem \ref{thm:joint_degree_distribution}]
Let $p$ be the smallest integer satisfying 
\begin{equation} \label{eq:thm_joint_degree_tail_atom}
	1 - F^\ast(p) < f^\ast(1),
\end{equation}
and define $p_n$ as the smallest integer that satisfies
\[
	1 - F^\ast_n(p_n) < f_n^\ast(1).
\]
Then we have that $\Prob{p_n = p}$ converges to one, since
\begin{align*}
	&\hspace{-30pt}\Prob{p \ne p_n, \Omega_n} \\
	&= \Prob{1 - F_n^\ast(p) \ge f_n^\ast(1), \Omega_n} \\
	&\le \Prob{(F^\ast(p) - F_n^\ast(p)) +(f^\ast(1) - f_n^\ast(1)) > f^\ast(1) - 1 + F^\ast(p), 
			\Omega_n} \\
	&\le \Prob{|F^\ast(p) - F_n^\ast(p)| > f^\ast(1) - 1 + F^\ast(p),\Omega_n)} \\
	&\hspace{10pt}+ \Prob{|f^\ast(1) - f_n^\ast(1)| > f^\ast(1) - 1 + F^\ast(p),\Omega_n)} \\
	&\le \frac{\Exp{|F^\ast(p) - F_n^\ast(p)|\ind{\Omega_n}}}{f^\ast(1) - 1 + F^\ast(p)} 
		+ \frac{\Exp{|f^\ast(1) - f_n^\ast(1)|\ind{\Omega_n}}}{f^\ast(1) - 1 + F^\ast(p)} \\
  &\le \frac{2n^{-\varepsilon}}{f^\ast(1) - 1 + F^\ast(p)} \le O\left(n^{-\varepsilon}\right),
\end{align*}
as $n \to \infty$, where we used that by definition of $p$ it holds that $f^\ast(1) - 1 + 
F^\ast(p) > 0$.
Therefore, if we define the event $P_n = \{p = p_n\}$ and let $\Lambda_n = P_n \cap \Omega_n$, then 
\[
	\Prob{\Lambda_n} \ge 1 - O\left(n^{-\varepsilon} + \Prob{\Omega_n^c}\right),
\]
so that for Theorem \ref{thm:joint_degree_distribution} it is enough to show that
\begin{equation}\label{eq:thm_joint_degrees_main}
	\Prob{\Xi_n^c, \Lambda_n} \le \bigO{n^{-\varepsilon + \delta}},
\end{equation}
as $n \to \infty$.

Now, observe that $p_n$ is the smallest degree such that nodes $i$ with degree $D_i > p_n$ will be 
connected to nodes with degree $1$, by the \texttt{DGA}, while $p$ is the corresponding degree for
the limit distribution. Therefore we have
\begin{equation}\label{eq:cases_psi}
	\psi(k,\ell) = \begin{cases}
		1 &\mbox{for all } k > p \text{ and } \ell = 1 \\
		1 &\mbox{for all } k = 1 \text{ and } \ell > p \\
		\psi(k,\ell) &\mbox{for all } k \le p \text{ and } \ell \le p \\
		0 &\mbox{else}
	\end{cases},
\end{equation}
while, on the event $P_n$, the same relations hold for $\psi_n$. The idea of the
proof is to split the analysis into the three regions 
\[
	(k = 1,\, \ell > p), \quad (k,\, \ell \le p) \quad \text{and} \quad (k > p,\, \ell = 1).
\]
The hard work is in the second region. However, since on the event $\Lambda_n$ all degree are
bounded by $p$, it suffices to analyze individual terms 
\[
	\left|\psi_n(k,\ell)\mathcal{E}_n(k,\ell) - \psi(k,\ell)\mathcal{E}(k,\ell)\right|,
\]
instead of the full sum
\[
	\sum_{k, \ell = 0}^p \left|\psi_n(k,\ell)\mathcal{E}_n(k,\ell) - \psi(k,\ell)\mathcal{E}(k,\ell)\right|.
\]

Recall that 
\[
	\Xi_n = \left\{\sum_{k, \ell = 0}^\infty \left|h_n(k,\ell) - h(k, \ell)\right| \le 
	K n^{-\delta}\right\}.
\]
and let us bound the probability in \eqref{eq:thm_joint_degrees_main} as follows,
\begin{align}
	\Prob{\Xi_n^c, \Lambda_n} &\le \Prob{\sum_{k,\ell = 1}^\infty \left|\psi_n(k,\ell) 
		- \psi(k,\ell)\right| \mathcal{E}_n(k,\ell) > \frac{K n^{-\delta}}{2}, \Lambda_n} 
		\label{eq:joint_degrees_bound1}\\
  &\hspace{10pt}+ \Prob{\sum_{k,\ell = 1}^\infty \psi(k,\ell)\left|
    (\mathcal{E}(k,\ell) - \mathcal{E}_n(k, \ell))\right| > \frac{K n^{-\delta}}{2}, \Lambda_n}.
		\label{eq:joint_degrees_bound2}
\end{align}

We will first deal with \eqref{eq:joint_degrees_bound1}. By \eqref{eq:cases_psi} and 
conditioned on $\Lambda_n$, we have
that $\left|\psi_n(k,\ell) - \psi(k,\ell)\right| \ne 0$, only when $k, \ell \le p$. Hence we get,
using the union bound,
\begin{align*}
	&\hspace{-30pt}\Prob{\sum_{k,\ell = 1}^\infty \left|\psi_n(k,\ell) - \psi(k,\ell)\right| 
		\mathcal{E}_n(k,\ell) > \frac{K n^{-\delta}}{2}, \Lambda_n} \\
	&= \Prob{\sum_{k,\ell = 1}^p \left|\psi_n(k,\ell) - \psi(k,\ell)\right| 
		\mathcal{E}_n(k,\ell) > \frac{K n^{-\delta}}{2}, \Lambda_n} \\
	&\le \sum_{k,\ell = 1}^p \Prob{\left|\psi_n(k,\ell) - \psi(k,\ell)\right| 
		\mathcal{E}_n(k,\ell) > \frac{K n^{-\delta}}{2p^2}, \Lambda_n} \\
	&\le \bigO{n^{-\varepsilon + \delta}},
\end{align*}
where the last line follows from Lemma \ref{lem:technical_psi_n_vs_psi}.

Next we consider \eqref{eq:joint_degrees_bound2}. First we use \eqref{eq:cases_psi} to 
bound the term inside the probability as follows
\begin{align}
	\sum_{k,\ell = 1}^\infty \psi(k,\ell)\left|
    	\left(\mathcal{E}(k,\ell) - \mathcal{E}_n(k,\ell)\right)\right| 
    &\le \sum_{k = 1}^{p} \sum_{\ell = 1}^{p} \psi(k,\ell) \left|
    	\mathcal{E}(k,\ell) - \mathcal{E}_n(k,\ell)\right| \label{eq:thm_joint_degree_k_ell} \\
    &\hspace{10pt}+ \sum_{\ell = p + 1}^\infty  \left|
    	\mathcal{E}(1,\ell) - \mathcal{E}_n(1,\ell)\right| \label{eq:thm_joint_degree_ell} \\
    &\hspace{10pt}+ \sum_{k = p + 1}^\infty  \left|
    	\mathcal{E}(k,1) - \mathcal{E}_n(k,1)\right| \label{eq:thm_joint_degree_k}
\end{align}
We will start by analyzing \eqref{eq:thm_joint_degree_ell}. For this we notice that 
$\mathcal{E}_n(1,\ell) - \mathcal{E}(1, \ell) = f_n^\ast(\ell) - f^\ast(\ell)$, so that
\begin{align*}
	\sum_{\ell = p + 1}^\infty \psi(1,\ell)\left|\mathcal{E}(1,\ell) - \mathcal{E}_n(1,\ell)\right| 
    &\le \sum_{\ell = 0}^\infty \left|f_n^\ast(\ell) - f^\ast(\ell)\right|.
\end{align*}
The upper bound for \eqref{eq:thm_joint_degree_k} is the same. Therefore, again using the union
bound, we have that
\begin{align*}
	&\Prob{\sum_{k,\ell = 0}^\infty \psi(k,\ell)\left|
    \left(\mathcal{E}(k,\ell) - \mathcal{E}_n(k,\ell)\right)\right| > \frac{K n^{-\delta}}{2}, \Lambda_n} \\
	&\le 2\Prob{\left|f_n^\ast(\ell) - f^\ast(\ell)\right| > \frac{K n^{-\delta}}{6}, \Omega_n} 
		+ \sum_{k, \ell = 0}^p \Prob{\left|\mathcal{E}(k,\ell) - \mathcal{E}_n(k,\ell)\right|
		> \frac{K n^{-\delta}}{6p^2}, \Omega_n} \\
	&\le \bigO{n^{-\varepsilon + \delta}}.
\end{align*}
Here we used Lemma \ref{lem:technical_mathcalE_n_vs_mathcalE} to bound the last probability in the second line.

With this final result we have proven \eqref{eq:thm_joint_degrees_main} and hence Theorem
\ref{thm:joint_degree_distribution}.
\end{proof}

All that is left is to prove the two technical lemmas \ref{lem:technical_psi_n_vs_psi} and 
\ref{lem:technical_mathcalE_n_vs_mathcalE}. Due to the use of both a minimum 
and maximum, in the definitions of $\mathcal{E}_n(k, \ell)$ and $\mathcal{E}(k, \ell)$ and the double cases
in $\psi_n(k, \ell)$ and $\psi(k,\ell)$, the proofs 
consists of many case distinctions, where we have to bound each specific case. In order to improve
the readability of the proofs we define, for any $k, \ell \ge 0$, the following events
\begin{align*}
	A_n &= \left\{1 - F_n^\ast(k) < F_n^\ast(\ell)\right\} \\
    B_n &= \left\{1 - F_n^\ast(k - 1) > F_n^\ast(\ell - 1)\right\}, \\
    I_n &= \left\{1 - F_n^\ast(k - 1) \le F_n^\ast(\ell)\right\}, \\
    J_n &= \left\{1 - F_n^\ast(k) \ge F_n^\ast(\ell - 1)\right\}.
\end{align*}
With these definitions we have that $\psi_n(k,\ell) = \ind{A_n}\ind{B_n}$. Moreover since $A^I_n 
\cap B_n^c = \emptyset$ we have that
\begin{equation}\label{eq:complemnt_psi_n}
	1 - \psi_n(k,\ell) = \ind{A_n}\ind{B_n^c} + \ind{A_n^c}\ind{B_n}.
\end{equation}
Where the event $A_n$ and $B_n$ determine the value of $\psi_n(k, \ell)$, so do the events $I_n$ 
and $J_n$ define the expression for $\mathcal{E}_n(k,\ell)$, as follows:
\begin{equation}\label{eq:cases_mathcalE_n}
	\mathcal{E}_n(k, \ell) = \begin{cases}
    	f_n^\ast(k) &\mbox{on the event } I_n \cap J_n \\
        1 - F^\ast(k - 1) - F^\ast(\ell - 1) &\mbox{on the event } I_n \cap J_n^c \\
        F^\ast(k) + F^\ast(\ell) - 1 &\mbox{on the event } I_n^c \cap J_n \\
        f_n^\ast(\ell) &\mbox{on the event } I_n^c \cap J_n^c.
    \end{cases}
\end{equation}
Note that by their definitions,
\[
	0 \le \, \psi_n(k, \ell),\, \psi(k, \ell),\, \mathcal{E}_n(k,\ell),\, \mathcal{E}(k,\ell)\, \le 1,
\]
for all $k, \ell \ge 0$. In addition we will often use the following result

\begin{lemma}\label{lem:technical_inequalities_Fast}
Let $k, \ell \ge 0$ be such that $1 - F^\ast(k) < F^\ast(\ell)$. Then
\[
	\Prob{1 - F_n^\ast(k) \ge F^\ast_n(\ell), \Omega_n} \le \bigO{n^{-\varepsilon}},
\]
as $n \to \infty$. 

\noindent If, on the other hand, $1 - F^\ast(k) > F^\ast(\ell)$, then
\[
	\Prob{1 - F_n^\ast(k) \le F^\ast_n(\ell), \Omega_n} \le \bigO{n^{-\varepsilon}},
\]
as $n \to \infty$.
\end{lemma}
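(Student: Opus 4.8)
The plan is to reduce the statement to a deterministic observation valid on the event $\Omega_n$. The only feature of $\Omega_n$ that I would use is its second component, $\sum_{k=0}^\infty |f_n^\ast(k) - f^\ast(k)| \le n^{-\varepsilon}$. Since $F_n^\ast(m) - F^\ast(m) = \sum_{t=0}^m \bigl(f_n^\ast(t) - f^\ast(t)\bigr)$ for every $m \ge 0$, this at once upgrades to the uniform cdf bound
\[
	\sup_{m \ge 0}\bigl|F_n^\ast(m) - F^\ast(m)\bigr| \le \sum_{t=0}^\infty \bigl|f_n^\ast(t) - f^\ast(t)\bigr| \le n^{-\varepsilon} \qquad \text{on } \Omega_n .
\]
I would record this line and then treat the two cases of the lemma by the same argument.

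For the first case put $c := F^\ast(\ell) - \bigl(1 - F^\ast(k)\bigr)$, which is a fixed positive constant by hypothesis. On $\Omega_n$, applying the bound above at $k$ and at $\ell$ gives
\[
	\bigl(1 - F_n^\ast(k)\bigr) - F_n^\ast(\ell) \le \bigl(1 - F^\ast(k)\bigr) - F^\ast(\ell) + 2n^{-\varepsilon} = -c + 2n^{-\varepsilon},
\]
so that $\{1 - F_n^\ast(k) \ge F_n^\ast(\ell)\} \cap \Omega_n \subseteq \{2n^{-\varepsilon} \ge c\}$. The latter is a deterministic statement, hence
\[
	\Prob{1 - F_n^\ast(k) \ge F_n^\ast(\ell),\, \Omega_n} \le \ind{2n^{-\varepsilon} \ge c} \le \tfrac{2}{c}\,n^{-\varepsilon} = \bigO{n^{-\varepsilon}},
\]
where the second inequality is trivial when the indicator vanishes and follows from $2n^{-\varepsilon}\ge c$ otherwise, so it holds for every $n\ge 1$. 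The second case is identical after replacing $c$ by $c' := \bigl(1 - F^\ast(k)\bigr) - F^\ast(\ell) > 0$, reversing the inequalities, and using that on $\Omega_n$ one then has $\bigl(1 - F_n^\ast(k)\bigr) - F_n^\ast(\ell) \ge c' - 2n^{-\varepsilon}$.

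There is no genuine obstacle here: the whole content is that a strict inequality between the limiting quantities leaves a fixed gap $c > 0$, and once the perturbation $2n^{-\varepsilon}$ coming from $\Omega_n$ drops below $c$ the reversed non-strict inequality for the empirical quantities becomes impossible on $\Omega_n$. The only thing warranting a line of care is that the rate $\bigO{n^{-\varepsilon}}$ is claimed for all $n$, which is why I would bound the indicator by $\tfrac{2}{c}\,n^{-\varepsilon}$ rather than merely remarking that the probability is eventually $0$; in fact it equals $0$ as soon as $n > (2/c)^{1/\varepsilon}$, but this stronger statement is not needed in the applications (Lemmas~\ref{lem:technical_psi_n_vs_psi} and~\ref{lem:technical_mathcalE_n_vs_mathcalE}).
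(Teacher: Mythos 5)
Your proof is correct and rests on the same key fact as the paper's, namely that on $\Omega_n$ the uniform bound $\sup_m|F_n^\ast(m)-F^\ast(m)|\le\sum_t|f_n^\ast(t)-f^\ast(t)|\le n^{-\varepsilon}$ cannot overcome the fixed gap $c=F^\ast(\ell)-1+F^\ast(k)>0$. The only (harmless) difference is the endgame: the paper converts this into a probability bound via a union bound and Markov's inequality applied to $\Exp{\sup_m|F_n^\ast(m)-F^\ast(m)|\ind{\Omega_n}}$, whereas you observe that the event intersected with $\Omega_n$ is deterministically empty once $2n^{-\varepsilon}<c$, which is slightly cleaner and in fact stronger than the stated $\bigO{n^{-\varepsilon}}$.
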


\begin{proof}
We will prove the first statement, since the proof for the second is similar.
First we write
\begin{align*}
	&\Prob{1 - F_n^\ast(k) \ge F^\ast_n(\ell), \Omega_n} \\
	&\Prob{\left(F^\ast(k) - F_n^\ast(k)\right) + 1 - F^\ast(k) 
		\ge F^\ast(\ell) + \left(F^\ast_n(\ell) - F^\ast(\ell)\right), \Omega_n} \\
	&\Prob{\left(F^\ast(k) - F_n^\ast(k)\right) + \left(F^\ast_n(\ell) - F^\ast(\ell)\right)  
		\ge F^\ast(\ell) - 1 + F^\ast(k), \Omega_n}.
\end{align*}
Next we use the union bound and Markov's inequality to obtain
\begin{align*}
	&\Prob{1 - F_n^\ast(k) \ge F^\ast_n(\ell), \Omega_n} \\
	&\le \Prob{\left|F^\ast(k) - F_n^\ast(k)\right| 
		\ge F^\ast(\ell) - 1 + F^\ast(k), \Omega_n} \\
	&\hspace{10pt}+ \Prob{\left|F^\ast(\ell) - F_n^\ast(\ell)\right| 
		\ge F^\ast(\ell) - 1 + F^\ast(k), \Omega_n} \\
	&\le \frac{2\Exp{\sup_{k \ge 0}|F_n^\ast(k) - F^\ast(k)|\ind{\Omega_n}}}
		{F^\ast(\ell) - 1 + F^\ast(k)} \\
	&\le \Exp{\sum_{k = 0}^\infty|f_n^\ast(k) - f^\ast(k)|\ind{\Omega_n}} 
		= \bigO{n^{-\varepsilon}},
\end{align*}
as $n \to \infty$, where we used $1 - F^\ast(k) < F^\ast(\ell)$ for the last equality.
\end{proof}

\begin{proof}[Proof of Lemma \ref{lem:technical_psi_n_vs_psi}]
Note that the specific expression of $\psi(k,\ell)$ depends on the ordering between 
\[
	1 - F^\ast(k) \quad \text{and} \quad F^\ast(\ell),
\]
and 
\[
	1 - F^\ast(k - 1) \quad \text{and} \quad F^\ast(\ell - 1).
\]
Therefore, we need to consider all different cases (<, =, >), where we treat equality as a separate case. 
This gives a total of nine cases. However, there are several combinations that do not need to be 
considered. For instance, $1 - F^\ast(k) > F^\ast(\ell)$ implies that $1 - F^\ast(k - 1) \ge 
F^\ast(\ell - 1)$. In the end, we are left with the following cases:
\begin{enumerate}[\upshape I)]
	\item $1 - F^\ast(k) < F^\ast(\ell)$ and $1 - F^\ast(k - 1) < F^\ast(\ell - 1)$ 
	\item $1 - F^\ast(k) = F^\ast(\ell)$ and $1 - F^\ast(k - 1) < F^\ast(\ell - 1)$ 
    \item $1 - F^\ast(k) < F^\ast(\ell)$ and $1 - F^\ast(k - 1) = F^\ast(\ell - 1)$ 
    \item $1 - F^\ast(k) < F^\ast(\ell)$ and $1 - F^\ast(k - 1) > F^\ast(\ell - 1)$ 
    \item $1 - F^\ast(k) = F^\ast(\ell)$ and $1 - F^\ast(k - 1) > F^\ast(\ell - 1)$ 
    \item $1 - F^\ast(k) > F^\ast(\ell)$ and $1 - F^\ast(k - 1) > F^\ast(\ell - 1)$ 
\end{enumerate}
We will start with the first case.
\par \smallskip

\noindent \textbf{I)} $\bm{1 - F^\ast(k) < F^\ast(\ell)}$ \textbf{and} $\bm{1 - F^\ast(k - 1) 
< F^\ast(\ell - 1)}$ \par \smallskip

First, note that in this case $\psi(k,\ell) = 0$. Moreover, since $F^\ast(\ell - 1) > 
1 - F^\ast(k - 1)$, it follows from Lemma \ref{lem:technical_inequalities_Fast} that
\begin{align*}
	\Prob{B_n} &\le \Prob{B_n, \Omega_n} + \Prob{\Omega_n^c} \\
    &\le O\left(n^{-\varepsilon} + \Prob{\Omega_n^c}\right).
\end{align*}
Hence, since $\psi_n(k,\ell) = 0$ on the event $B_n^c$, we have
\begin{align*}
	&\hspace{-30pt}\Prob{\left|\psi_n(k,\ell) - \psi(k,\ell)\right| \mathcal{E}_n(k,\ell) 
		> Kn^{-\delta}} \\
  &= \Prob{\psi_n(k,\ell) \mathcal{E}_n(k,\ell) > Kn^{-\delta}, B_n^c} + \Prob{B_n} \\
  &\le O\left(n^{-\varepsilon} + \Prob{\Omega_n^c}\right).
\end{align*}

\noindent \textbf{II)} $\bm{1 - F^\ast(k) = F^\ast(\ell)}$ \textbf{and} $\bm{1 - F^\ast(k - 1) 
< F^\ast(\ell - 1)}$ \par \smallskip

In this case we again have that $\psi(k,\ell) = 0$. In addition 
\[
	1 - F^\ast(k - 1) > 1 - F^\ast(k) > F^\ast(\ell),
\]
so that, by Lemma \ref{lem:technical_inequalities_Fast}
\begin{align*}
	\Prob{I_n} &\le O\left(n^{-\varepsilon} + \Prob{\Omega_n^c}\right).
\end{align*}
Similarly, using that $1 - F^\ast(k) > F^\ast(\ell - 1)$, we have
\[
	\Prob{J_n^c} \le O\left(n^{-\varepsilon} + \Prob{\Omega_n^c}\right).
\]
Therefore, using \eqref{eq:cases_mathcalE_n} and $1 - F^\ast(k) = F^\ast(\ell)$, it follows that
\begin{align*}
	&\hspace{-30pt}\Prob{\left|\psi_n(k,\ell) - \psi(k,\ell)\right| \mathcal{E}_n(k,\ell) > 
		K n^{-\delta}}\\
	&= \Prob{\psi_n(k,\ell) \mathcal{E}_n(k,\ell) > K n^{-\delta}} \\
	&\le \Prob{\mathcal{E}_n(k,\ell) > Kn^{-\delta}, I_n^c, J_n} + \Prob{I_n} + \Prob{J_n^c} 
		+ \Prob{I_n, J_n^c}\\
	&\le \Prob{\left|F_n^\ast(\ell) + F_n^\ast(k) - 1\right| > K n^{-\delta}} + O\left(n^{-\varepsilon}
		+ \Prob{\Omega_n^c}\right)\\
	&\le \Prob{\left|F_n^\ast(\ell) - F^\ast(\ell)\right| > \frac{K n^{-\delta}}{2}} \\
	&\hspace{10pt}+ \Prob{\left|F_n^\ast(k) - F^\ast(k)\right| > \frac{K n^{-\delta}}{2}} 
			+ O\left(n^{-\varepsilon} + \Prob{\Omega_n^c}\right)\\
	&\le + O\left(n^{-\varepsilon + \delta} + \Prob{\Omega_n^c}\right),
\end{align*}
where for the fifth line we used that
\begin{align*}
	\left|F_n^\ast(\ell) + F_n^\ast(k) - 1\right| 
    &= \left|F_n^\ast(\ell) - F^\ast(\ell) + (1 - F^\ast(k)) + F_n^\ast(k) - 1\right| \\
    &\le \left|F_n^\ast(\ell) - F^\ast(\ell)\right| + \left|F_n^\ast(k) - F^\ast(k)\right|,
\end{align*}
since $1 - F^\ast(k) = F^\ast(\ell)$.

Case III) and V) can be dealt with using arguments similar to case II), while case VI) is similar 
to I). Therefore, there is only one case left. \par \smallskip

\noindent \textbf{IV)} $\bm{1 - F^\ast(k) < F^\ast(\ell)}$ \textbf{and} $\bm{1 - F^\ast(k - 1) 
	> F^\ast(\ell - 1)}$ \par \smallskip

We first note that, since $1 - F^\ast(k) - F^\ast(\ell) > 0$,
\begin{align*}
	\Prob{A_n^c} 
	&\le O\left(n^{-\varepsilon} + \Prob{\Omega_n^c}\right),
\end{align*}
by Lemma \ref{lem:technical_inequalities_Fast}, and similarly
\[
	\Prob{B_n^c} \le O\left(n^{-\varepsilon} + \Prob{\Omega_n^c}\right).
\]
Since for this case $\psi(k,\ell) = 1$, we have,
\begin{align*}
	&\hspace{-30pt}\Prob{\left|\psi_n(k,\ell) - \psi(k,\ell)\right| \mathcal{E}_n(k,\ell) 
		> K n^{-\delta}} \\
	&= \Prob{(1 - \psi_n(k,\ell)) \mathcal{E}_n(k,\ell) > K n^{-\delta}}\\
	&\le \Prob{(1 - \psi_n(k,\ell)) \mathcal{E}_n(k,\ell) > K n^{-\delta}, A_n, B_n} \\
	&\le \Prob{A_n^c} + \Prob{B_n^c} + \Prob{A_n^c, B_n^c} \\
	&\le O\left(n^{-\varepsilon} + \Prob{\Omega_n^c}\right),
\end{align*}
where we used \eqref{eq:complemnt_psi_n} for the third line.

\end{proof}

\begin{proof}[Proof of Lemma \ref{lem:technical_mathcalE_n_vs_mathcalE}]
Similar to the proof of Lemma \ref{lem:technical_psi_n_vs_psi} we will have to consider 
different cases. Here these are with respect to the different relations between
\[
	1 - F^\ast(k - 1) \quad \text{and} \quad F^\ast(\ell),
\]
and 
\[
	1 - F^\ast(k) \quad \text{and} \quad F^\ast(\ell - 1),
\]
which determine the expression for $\mathcal{E}(k,\ell)$. To analyze each case we will also need to
distinguish between the different expression of $\mathcal{E}_n(k,\ell)$, which are determined by 
the events $I_n$ and $J_n$. \par \smallskip

We will consider the three cases where $1 - F^\ast(k) > F^\ast(\ell - 1)$. The other six cases can 
be dealt with using similar arguments. First note that by Lemma 
\ref{lem:technical_inequalities_Fast}
\begin{align*}
	\Prob{J_n^c} \
  &\le O\left(n^{-\varepsilon} + \Prob{\Omega_n^c}\right).
\end{align*}

\noindent \textbf{I)} $\bm{1 - F^\ast(k - 1) < F^\ast(\ell)}$ \textbf{and} $\bm{1 - F^\ast(k) > 
F^\ast(\ell - 1)}$ \par \smallskip

Similar to $\Prob{J_n^c}$, it follows from Lemma \ref{lem:technical_inequalities_Fast} that
\begin{align*}
	\Prob{I_n^c}
  &\le O\left(n^{-\varepsilon} + \Prob{\Omega_n^c}\right).
\end{align*}
Therefore, by conditioning on the different combinations of $I_n$ and $J_n$, we get
\begin{align*}
	&\hspace{-20pt}\Prob{\left|\mathcal{E}(k,\ell) - \mathcal{E}_n(k,\ell)\right| > Kn^{-\delta}} \\
    &\le \Prob{\left|\mathcal{E}(k,\ell) - \mathcal{E}_n(k,\ell)\right| > Kn^{-\delta}, I_n, J_n}
    	+ \Prob{J_n^c} + \Prob{I_n^c} + \Prob{J_n^c, I_n^c} \\
    &\le \Prob{\left|f^\ast(k) - f_n^\ast(k)\right| > Kn^{-\delta}, \Omega_n} 
    	+ O\left(n^{-\varepsilon} + \Prob{\Omega_n^c}\right) \\
    &\le O\left(n^{-\varepsilon + \delta} + \Prob{\Omega_n^c}\right).
\end{align*}

\noindent \textbf{II)} $\bm{1 - F^\ast(k - 1) = F^\ast(\ell)}$ \textbf{and} $\bm{1 - F^\ast(k) > 
F^\ast(\ell - 1)}$ \par \smallskip

Since $f^\ast(k) = F^\ast(k) - F^\ast(k-1)$,
\begin{align*}
	\left|F_n^\ast(k) + F_n^\ast(\ell) - 1 - f^\ast(k)\right| 
    &= \left|F_n^\ast(k) - F^\ast(k) + F_n^\ast(\ell) - 1 + F^\ast(k - 1)\right| \\
    &\le \left|F_n^\ast(k) - F^\ast(k)\right| + \left|F_n^\ast(\ell) - F^\ast(\ell)\right|,
\end{align*}
from which it follows that
\begin{align*}
	&\hspace{-30pt}\Prob{\left|\mathcal{E}(k,\ell) - \mathcal{E}_n(k,\ell)\right| > Kn^{-\delta}, 
		I_n^c, J_n} \\
	&\le \Prob{\left|F_n^\ast(k) + F_n^\ast(\ell) - 1 - f^\ast(k)\right| > Kn^{-\delta}} \\
	&\le \Prob{\left|F_n^\ast(k) - F^\ast(k)\right| > \frac{Kn^{-\delta}}{2}}
		+ \Prob{\left|F_n^\ast(\ell) - F^\ast(\ell)\right| > \frac{Kn^{-\delta}}{2}} \\
	&\le O\left(n^{-\varepsilon + \delta} + \Prob{\Omega_n^c}\right).
\end{align*}
Hence, we obtain
\begin{align*}
	&\hspace{-30pt}\Prob{\left|\mathcal{E}(k,\ell) - \mathcal{E}_n(k,\ell)\right| > Kn^{-\delta}} \\
	&\le \Prob{\left|f^\ast(k) - f_n^\ast(k)\right| > Kn^{-\delta}} \\
	&\hspace{10pt}+ \Prob{\left|\mathcal{E}(k,\ell) - \mathcal{E}_n(k,\ell)\right| > Kn^{-\delta}, 
		I_n^c, J_n}	+ 2\Prob{J_n^c}\\
	&\le O\left(n^{-\varepsilon + \delta} + \Prob{\Omega_n^c}\right).
\end{align*}

\noindent \textbf{III)} $\bm{1 - F^\ast(k - 1) > F^\ast(\ell)}$ \textbf{and} $\bm{1 - F^\ast(k) > 
F^\ast(\ell - 1)}$ \par \smallskip

First we notice that in this case $\mathcal{E}(k,\ell) = F^\ast(\ell) + F^\ast(k) - 1$. Next, 
using Lemma \ref{lem:technical_inequalities_Fast}, we have
\[
	\Prob{I_n} \le O\left(n^{-\varepsilon + \delta} + \Prob{\Omega_n}\right).
\]
Therefore it follows that
\begin{align*}
	&\hspace{-30pt}\Prob{\left|\mathcal{E}(k,\ell) - \mathcal{E}_n(k,\ell)\right| > Kn^{-\delta}} \\
	&\le \Prob{\left|F^\ast(\ell) + F^\ast(k) - 1 - \mathcal{E}_n(k,\ell)\right| > Kn^{-\delta}, 
		I_n^c, J_n} \\
	&\hspace{10pt}+ \Prob{I_n} + 2\Prob{J_n^c} \\
	&\le \Prob{\left|F_n^\ast(k) - F^\ast(k)\right| > \frac{Kn^{-\delta}}{2}}
		+ \Prob{\left|F_n^\ast(\ell) - F^\ast(\ell)\right| > \frac{Kn^{-\delta}}{2}} \\
	&\hspace{10pt}+ \Prob{I_n} + 2\Prob{J_n^c}\\
	&\le O\left(n^{-\varepsilon + \delta} + \Prob{\Omega_n^c}\right).
\end{align*}
\end{proof}

\subsection{Main results}\label{ssec:proof_main_result}

Here we will give the proofs of our two main results. We start with a useful result which we need to prove Theorem \ref{thm:main_result}. 

\begin{proposition}\label{prop:convergence_spearman}
Let $G_n \in \mathcal{G}_{\eta,\,\varepsilon}(f, f^\ast)$ and let $X, Y$ be random variables with joint distribution $h$ as defined in \eqref{eq:joint_degree_density}.
Then, for any
$0 < \delta < \varepsilon$ and $K > 0$, 
\[
	\Prob{\left|\widetilde{\rho}(G_n) - \rho(D_\ast, D^\ast)\right| > n^{-\delta}} = \bigO{n^{-\varepsilon + \delta} + \Prob{\Omega_n^c}},
\]
as $n \to \infty$.
\end{proposition}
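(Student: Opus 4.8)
The plan is to rewrite $\widetilde{\rho}(G_n)$ as a weighted sum against the empirical joint degree density $h_n$ of Proposition~\ref{prop:empirical_joint_degree_distribution}, do the same for $\rho(D_\ast,D^\ast)$ with the limit $h$, and then bound the difference by separating the two error sources: replacing $\mathcal{F}_n^\ast$ by $\mathcal{F}^\ast$, which is controlled on $\Omega_n$, and replacing $h_n$ by $h$, which is controlled by Theorem~\ref{thm:joint_degree_distribution}.

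First I would observe that grouping the directed edges of $G_n$ by the degree pair of their endpoints gives, from \eqref{eq:spearmans_rho},
\[
	\widetilde{\rho}(G_n) + 3 = \frac{3}{L_n}\sum_{i \to j \in G_n} \mathcal{F}_n^\ast(D_i)\mathcal{F}_n^\ast(D_j) = 3\sum_{k,\ell \ge 0} \mathcal{F}_n^\ast(k)\mathcal{F}_n^\ast(\ell)\, h_n(k,\ell),
\]
while, since $D_\ast, D^\ast$ have joint law $h$, \eqref{eq:spearmans_rho_theoretical} gives $\rho(D_\ast,D^\ast) + 3 = 3\sum_{k,\ell \ge 0} \mathcal{F}^\ast(k)\mathcal{F}^\ast(\ell)\, h(k,\ell)$. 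Subtracting and applying the triangle inequality, I would split $\left|\widetilde{\rho}(G_n) - \rho(D_\ast,D^\ast)\right|$ into $3\sum_{k,\ell}\left|\mathcal{F}_n^\ast(k)\mathcal{F}_n^\ast(\ell) - \mathcal{F}^\ast(k)\mathcal{F}^\ast(\ell)\right| h_n(k,\ell)$ plus $3\sum_{k,\ell}\mathcal{F}^\ast(k)\mathcal{F}^\ast(\ell)\left|h_n(k,\ell) - h(k,\ell)\right|$.

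For the first sum I would work on $\Omega_n$. Since $0 \le F_n^\ast, F^\ast \le 1$ we have $\mathcal{F}_n^\ast, \mathcal{F}^\ast \le 2$ pointwise, and on $\Omega_n$ one has, for every $k$, $\left|\mathcal{F}_n^\ast(k) - \mathcal{F}^\ast(k)\right| \le 2\sup_t \left|F_n^\ast(t) - F^\ast(t)\right| \le 2\sum_t \left|f_n^\ast(t) - f^\ast(t)\right| \le 2n^{-\varepsilon}$; the identity $ab - cd = a(b-d) + (a-c)d$ then yields $\left|\mathcal{F}_n^\ast(k)\mathcal{F}_n^\ast(\ell) - \mathcal{F}^\ast(k)\mathcal{F}^\ast(\ell)\right| \le 8n^{-\varepsilon}$ uniformly in $k,\ell$, and since $\sum_{k,\ell} h_n(k,\ell) = 1$ the first sum is at most $24 n^{-\varepsilon}$. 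For the second sum I would use $\mathcal{F}^\ast(k)\mathcal{F}^\ast(\ell) \le 4$ together with Theorem~\ref{thm:joint_degree_distribution} applied to $G_n$: on $\Xi_n = \{\sum_{k,\ell}|h_n(k,\ell) - h(k,\ell)| \le K' n^{-\delta}\}$ the second sum is at most $12 K' n^{-\delta}$, and $\Prob{\Xi_n} \ge 1 - \bigO{n^{-\varepsilon+\delta} + \Prob{\Omega_n^c}}$.

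Finally I would fix the free constant in Theorem~\ref{thm:joint_degree_distribution} to be $K' = 1/24$ — legitimate, since that theorem holds for every positive constant and the same range $0 < \delta < \varepsilon$ — so that on $\Omega_n \cap \Xi_n$ the total is at most $24 n^{-\varepsilon} + \frac{1}{2} n^{-\delta}$; because $\varepsilon > \delta$, the first term is $\le \frac{1}{2} n^{-\delta}$ for all $n$ beyond some $n_0$, hence $\left|\widetilde{\rho}(G_n) - \rho(D_\ast,D^\ast)\right| \le n^{-\delta}$ on $\Omega_n \cap \Xi_n$ for such $n$, and a union bound over $\Omega_n^c$ and $\Xi_n^c$ gives the stated $\bigO{n^{-\varepsilon+\delta} + \Prob{\Omega_n^c}}$. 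The argument is essentially bookkeeping once the rewriting through $h_n$ and $h$ is in place; the one substantive input is Theorem~\ref{thm:joint_degree_distribution} (which is where the structure of the \texttt{DGA} output enters), and the only point needing care is that the proposition's bound carries no free constant in front of $n^{-\delta}$, which is why I would absorb all constants into the choice of $K'$ and into "$n$ large" rather than leaving them loose.
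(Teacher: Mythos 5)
Your proposal is correct and follows essentially the same route as the paper: both rewrite $\widetilde{\rho}(G_n)$ as $3\sum_{k,\ell}\mathcal{F}_n^\ast(k)\mathcal{F}_n^\ast(\ell)h_n(k,\ell)-3$, split the difference into the $\mathcal{F}_n^\ast$-versus-$\mathcal{F}^\ast$ error (bounded uniformly by $\sup_k|F_n^\ast(k)-F^\ast(k)|\le n^{-\varepsilon}$ on $\Omega_n$) and the $h_n$-versus-$h$ error (controlled by Theorem~\ref{thm:joint_degree_distribution}), and conclude by a union bound. Your explicit absorption of constants via the free $K'$ in Theorem~\ref{thm:joint_degree_distribution} and ``$n$ large'' is, if anything, slightly more careful than the paper's own bookkeeping.
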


\begin{proof}
First we write
\begin{align*}
	\left|\widetilde{\rho}(\widehat{G}_n) - \rho(X,Y)\right| 
    &\le 3\left|\sum_{k, \ell = 0}^\infty \mathcal{F}_n^\ast(k)
    	\mathcal{F}_n^\ast(\ell) h_n(k, \ell) - \mathcal{F}^\ast(k)
        \mathcal{F}^\ast(\ell) h(k, \ell)\right|\\
    &\le 3\left|\sum_{k, \ell = 0}^\infty \mathcal{F}_n^\ast(k)
    	\mathcal{F}_n^\ast(\ell) \left(h_n(k, \ell) - h(k, \ell)\right)\right| \\
    &\hspace{10pt}+ 3\sum_{k, \ell = 0}^\infty \left|\mathcal{F}_n^\ast(k)
    	\mathcal{F}_n^\ast(\ell) - \mathcal{F}^\ast(k)\mathcal{F}^\ast(\ell) \right|h_n(k, \ell)
    	\\
    &\le 12 \sum_{k, \ell = 0}^\infty \left|h_n(k, \ell) - h(k, \ell)\right|
    	+ 24 \sup_{k} \left|F_n^\ast(k) - F^\ast(k)\right|\numberthis 
        \label{eq:main_result_upper_bound}.
\end{align*}
For the last inequality, we used
\begin{align*}
	&\hspace{-40pt}\sum_{k, \ell = 0}^\infty \left|\mathcal{F}_n^\ast(k)
    	\mathcal{F}_n^\ast(\ell) - \mathcal{F}^\ast(k)\mathcal{F}^\ast(\ell) \right|h_n(k, \ell) \\
    &\le \sup_{k, \ell} \left|\mathcal{F}_n^\ast(k)\mathcal{F}_n^\ast(\ell) 
    	- \mathcal{F}^\ast(k)\mathcal{F}^\ast(\ell) \right| \\
    &\le \sup_{k, \ell} \left|\mathcal{F}_n^\ast(k) - \mathcal{F}^\ast(k)\right| \mathcal{F}_n^
    	\ast(\ell) + \sup_{k, \ell} \left|\mathcal{F}_n^\ast(\ell) - \mathcal{F}^\ast(\ell)\right| 
        \mathcal{F}^\ast(k) \\
    &\le 4 \sup_{k} \left|\mathcal{F}_n^\ast(k) - \mathcal{F}^\ast(k)\right|
    \le 8 \sup_{k} \left|F_n^\ast(k) - F^\ast(k)\right|.
\end{align*}
Note that by Theorem \ref{thm:joint_degree_distribution}
\[
	\Prob{12 \sum_{k, \ell = 0}^\infty \left|h_n(k, \ell) - h(k, \ell)\right| > \frac{n^{-\delta}}{2}}
	= \bigO{n^{-\varepsilon + \delta} + \Prob{\Omega_n^c}}.
\]
Moreover, on the event $\Omega_n$,
\[
	\sup_{k \ge 0} \left|F_n^\ast(k) - F^\ast(k)\right| \le \sum_{k = 0}^\infty |f_n^\ast(k) - f^\ast(k)| \le n^{-\varepsilon}.
\]
Hence, it follows from \eqref{eq:main_result_upper_bound} and Markov's inequality that
\begin{align*}
	\Prob{\left|\rho(\tilde{G}_n - \rho(X,Y)\right| > n^{-\delta}} 
    &\le \Prob{12 \sum_{k, \ell = 0}^\infty \left|h_n(k, \ell) - h(k, \ell)\right| 
    	> \frac{n^{-\delta}}{2}} \\
    &\hspace{10pt}+ \Prob{24 \sup_{k} \left|F_n^\ast(k) - F^\ast(k)\right| > 
        \frac{n^{-\delta}}{2}, \Omega_n} + O\left(\Prob{\Omega_n^c}\right)\\
    &\le O\left(n^{-\varepsilon + \delta} + \Prob{\Omega_n^c}\right) 
    	+ 48 n^{\delta} \Exp{\sup_{k} \left|F_n^\ast(k) - F^\ast(k)\right|\ind{\Omega_n}} \\
    &\le O\left(n^{-\varepsilon + \delta} + \Prob{\Omega_n^c}\right).
\end{align*}
\end{proof}

We are now ready to give the proof of Theorem \ref{thm:main_result}.

\begin{proof}[Proof of Theorem \ref{thm:main_result}]
Consider a graph $G_n \in \mathcal{G}_{\eta, \varepsilon}(f,f^\ast)$, denote its degree sequence by ${\bf D}_n$,
let $\widetilde{G}_n = \texttt{DGA}({\bf D}_n)$ and recall that $\kappa = (\varepsilon + \delta)/2$.
Then, since $\delta < \kappa < \varepsilon$, it follows from Proposition \ref{prop:convergence_spearman} that
\[
	\Prob{\left|\widetilde{\rho}(\widehat{G}_n) - \rho(D_\ast,D^\ast)\right| > K n^{-\delta}} 
	\le \bigO{n^{-\varepsilon + \kappa} + \Prob{\Omega_n^c}},
\]
which proves the second statement of the theorem.

For the first statement, note that by Theorem \ref{thm:optimality_algorithm} 
\[
	\sum_{i \to j \in G_n} \mathcal{F}^\ast_n(D_i)\mathcal{F}^\ast_n(D_j) \ge
	\sum_{i \to j \in \widetilde{G}_n} \mathcal{F}^\ast_n(D_i)\mathcal{F}^\ast_n(D_j)
\]
so that
\[
	\widetilde{\rho}(G_n) \ge \widetilde{\rho}(\widehat{G}_n).
\]
Therefore we have, as $n \to \infty$,
\begin{align*}
	\Prob{\widetilde{\rho}(G_n) < \rho(D_\ast, D^\ast) - K n^{-\delta}}
	&\le \Prob{\widetilde{\rho}(\widehat{G}_n) < \rho(D_\ast, D^\ast) - n^{-\delta}}\\
	&\le \Prob{\left|\widetilde{\rho}(\widetilde{G}_n) - \rho(D_\ast, D^\ast) \right| 
		> K n^{-\delta}} \\
	&\le \bigO{n^{-\varepsilon - \kappa} + \Prob{\Omega_n^c}},
\end{align*}
which proves the first statement of the theorem.
\end{proof}

We now move on to Theorem \ref{thm:independence_spearman_tail}. We will follow the strategy described in Section \ref{sec:spearman_tail_distribution}, that is we will use the delta transformation to construct a degree distribution $f_\rho$ for which $f_\rho^\ast(1)$ is large enough.

First observe that \eqref{eq:spearman_lower_bound_a1} together with Proposition \ref{prop:convergence_spearman} imply \ref{prop:lower_bound_spearman}.

\begin{proof}[Proof of Theorem \ref{thm:independence_spearman_tail}]
Let $\delta$ be such that $9(\xi/2)^2 - 6(\xi/2)^3 - 3 = \rho + \epsilon$, for some $\epsilon > 0$, and 
denote by $f^\ast$ the size-biased distribution of $f$. Now take $f^\ast_\delta$ to be the 
$\delta$-transform of $f^\ast$ and set
\begin{align*}
    \mu_\rho = \left(\mu(1 - F(K_\delta)) + \sum_{t = 1}^{K_\delta} \frac{f^\ast_\delta(t)}{t}\right)^{-1},
\end{align*}
where $K_\delta$ was defined as the smallest integer such that $F^\ast(K_\delta) > \delta$. 
Now we define the function $f_\rho$ by:
\begin{align*}
	f_\rho(0) = \frac{\mu_\rho f^\ast_\delta(1)}{2} = f_\rho(1) &= \frac{\mu_\rho f^\ast_\delta(1)}{2} \quad \text{and} \quad
	f_\rho(t) = \frac{\mu_\rho f^\ast_\delta(t)}{t} \quad \text{for } k \ge 2.
\end{align*}
Then, since by construction $f_\rho^\ast(t) = f^\ast(t)$ for all $t > K_\delta$, it follows that 
\begin{align*}
	\sum_{t = 0}^\infty f_\rho(t) &= \sum_{t = 1}^\infty \frac{\mu_\rho f_\rho^\ast(t)}{t} \\
    &= \mu_\rho\left(\sum_{t = 1}^{K_\delta} \frac{f_\rho^\ast(t)}{t} + \sum_{t = K_\delta + 1}^\infty \frac{f^\ast(t)}{t}\right) \\
    &= \mu_\rho\left(\sum_{t = 1}^{K_\delta} \frac{f_\rho^\ast(t)}{t} + \mu(1 - F(K_\delta))\right) = 1,
\end{align*}
so that $f_\rho$ defines a probability density. Moreover, since for all $k > K_\delta$
\[
	1 - F_\rho(k) = \sum_{t = k + 1}^\infty f_\rho(t) = \mu_\rho \sum_{t = k + 1}^\infty \frac{f_\delta^\ast(t)}{t}
    = \mu_\rho \sum_{t = k + 1}^\infty \frac{f^\ast(t)}{t} = \frac{\mu_\rho}{\mu} \sum_{t = k + 1}^\infty f(t),
\]
it follows that $\sum_{k = 0}^\infty t^{1 + \eta}f_\rho(t) < \infty$ and
\[
	\lim_{k \to \infty} \frac{1 - F_\rho(k)}{1 - F(k)} = \frac{\mu_\rho}{\mu}.
\]

Now let $D$ have probability density $f_\rho$, and hence size-biased density 
$f_\rho^\ast(t) = tf_\rho(t)/\mu_\rho$, and let ${\bf D}_n$ be generated by the \texttt{IID} algorithm, by sampling from $D$. Then, by 
Lemma \ref{lem:iid_algorithm}, ${\bf D}_n \in \mathcal{D}_{\eta, \varepsilon}(
f_\rho, f_\rho^\ast)$ and since by construction of $f_\rho$ we have that $f_\rho^\ast(1) = \delta/2$, 
it follows that
\[
	9f_\rho^\ast(1)^2 - 6f_\rho^\ast(1)^3 - 3 = \rho + \epsilon.
\]
Hence, if $G_n$ is a graph with degree sequence ${\bf D}_n$, we have, by taking $\delta = 
\min(\varepsilon, 1/2)/2$ in Proposition \ref{prop:lower_bound_spearman}, that as $n \to 
\infty$,
\begin{align*}
	\Prob{\rho(G_n) > \rho} &\ge \lim_{n \to \infty}\Prob{\rho(G_n) > \rho + \epsilon 
		- n^{-\delta}} \\
  &= \Prob{\rho(G_n) > 9f_\rho^\ast(1)^2 - 6f_\rho^\ast(1)^3 - 3 - n^{-\delta}} \\
	&\ge 1 - \bigO{n^{-\varepsilon + 3\kappa/4} + \Prob{\Omega_n^c}}.
\end{align*}
\end{proof}

\bibliographystyle{plain}
\bibliography{references} 

\begin{thebibliography}{10}

\bibitem{Alderson2007}
David~L Alderson and Lun Li.
\newblock Diversity of graphs with highly variable connectivity.
\newblock {\em Physical Review E}, 75(4):046102, 2007.

\bibitem{Bassler2015a}
Kevin~E Bassler, Charo~I Del~Genio, P{\'e}ter~L Erd{\H{o}}s, Istv{\'a}n
  Mikl{\'o}s, and Zolt{\'a}n Toroczkai.
\newblock Exact sampling of graphs with prescribed degree correlations.
\newblock {\em New Journal of Physics}, 17(8):083052, 2015.

\bibitem{Bollobas1980}
B{\'e}la Bollob{\'a}s.
\newblock A probabilistic proof of an asymptotic formula for the number of
  labelled regular graphs.
\newblock {\em European Journal of Combinatorics}, 1(4):311--316, 1980.

\bibitem{Chen2015}
Ningyuan Chen and Mariana Olvera-Cravioto.
\newblock Efficient simulation for branching linear recursions.
\newblock {\em arXiv preprint arXiv:1503.09150}, 2015.

\bibitem{Deprez2015}
Philippe Deprez and Mario~V W{\"u}thrich.
\newblock Construction of directed assortative configuration graphs.
\newblock {\em arXiv preprint arXiv:1510.00575}, 2015.

\bibitem{Hurd2015}
TR~Hurd.
\newblock The construction and properties of assortative configuration graphs.
\newblock {\em arXiv preprint arXiv:1512.03084}, 2015.

\bibitem{Kannan1999}
Ravi Kannan, Prasad Tetali, and Santosh Vempala.
\newblock Simple markov-chain algorithms for generating bipartite graphs and
  tournaments.
\newblock {\em Random Structures and Algorithms}, 14(4):293--308, 1999.

\bibitem{Maslov2002}
Sergei Maslov and Kim Sneppen.
\newblock Specificity and stability in topology of protein networks.
\newblock {\em Science}, 296(5569):910--913, 2002.

\bibitem{Menche2010}
J{\"o}rg Menche, Angelo Valleriani, and Reinhard Lipowsky.
\newblock Asymptotic properties of degree-correlated scale-free networks.
\newblock {\em Physical Review E}, 81(4):046103, 2010.

\bibitem{Mesfioui2005}
Mhamed Mesfioui and Abdelouahid Tajar.
\newblock On the properties of some nonparametric concordance measures in the
  discrete case.
\newblock {\em Nonparametric Statistics}, 17(5):541--554, 2005.

\bibitem{Molloy1995}
Michael Molloy and Bruce Reed.
\newblock A critical point for random graphs with a given degree sequence.
\newblock {\em Random structures \& algorithms}, 6(2-3):161--180, 1995.

\bibitem{Newman2002}
Mark~EJ Newman.
\newblock Assortative mixing in networks.
\newblock {\em Physical review letters}, 89(20):208701, 2002.

\bibitem{Newman2001}
Mark~EJ Newman, Steven~H Strogatz, and Duncan~J Watts.
\newblock Random graphs with arbitrary degree distributions and their
  applications.
\newblock {\em Physical Review E}, 64(2):026118, 2001.

\bibitem{Stanton2012}
Isabelle Stanton and Ali Pinar.
\newblock Constructing and sampling graphs with a prescribed joint degree
  distribution.
\newblock {\em Journal of Experimental Algorithmics (JEA)}, 17:3--5, 2012.

\bibitem{Hofstad2014a}
Remco van~der Hofstad.
\newblock Random graphs and complex networks.
\newblock {\em Lecture notes in prep}, 2014.

\bibitem{Hofstad2014}
Remco van~der Hofstad and Nelly Litvak.
\newblock Degree-degree dependencies in random graphs with heavy-tailed
  degrees.
\newblock {\em Internet mathematics}, 10(3-4):287--334, 2014.

\bibitem{Hoorn2014}
Pim van~der Hoorn and Nelly Litvak.
\newblock Convergence of rank based degree-degree correlations in random
  directed networks.
\newblock {\em Moscow Journal of Combinatorics and Number Theory}, 4(4):45--83,
  2014.

\bibitem{Hoorn2014a}
Pim van~der Hoorn and Nelly Litvak.
\newblock Degree-degree dependencies in directed networks with heavy-tailed
  degrees.
\newblock {\em Internet Mathematics}, 11(2):155--179, 2015.

\bibitem{Villani2008}
C{\'e}dric Villani.
\newblock {\em Optimal transport: old and new}, volume 338.
\newblock Springer Science \& Business Media, 2008.

\bibitem{Xulvi-Brunet2004}
R~Xulvi-Brunet and IM~Sokolov.
\newblock Reshuffling scale-free networks: From random to assortative.
\newblock {\em Physical Review E}, 70(6):066102, 2004.

\bibitem{Yang2016}
Dan Yang, Liming Pan, and Tao Zhou.
\newblock Lower bound of assortativity coefficient in scale-free networks.
\newblock {\em arXiv preprint arXiv:1602.04350}, 2016.

\end{thebibliography}

\end{document}